\newtheorem{theorem}{\textbf{Theorem}}[section]
\newtheorem{proposition}[theorem]{\textbf{Proposition}}
\newtheorem{corollary}[theorem]{\textbf{Corollary}}
\newtheorem{lemma}[theorem]{\textbf{Lemma}}
\theoremstyle{definition}
\newtheorem{remark}[theorem]{Remark}
\theoremstyle{remark}
\newtheorem{example}[theorem]{Example}
\newenvironment{subproof}[1][\proofname]{%
  \begin{proof}[#1]%
}{%
  \end{proof}%
}
\newcommand{\truearrow}{[line width=.25mm,decoration={markings,mark=at position .99 with {\arrow[scale=3,>=stealth]{>}}},postaction={decorate}]}
\newcommand{\ket}[1]{\lvert #1 \rangle}
\newcommand{\bra}[1]{\langle #1 \rvert}
\newcommand{\braket}[2]{\langle #1 | #2 \rangle}
\newcommand{\abs}[1]{\lvert #1 \rvert}
\newcommand{\normord}[1]{ {: \mathrel{#1} :} }  
\newcommand{\dv}{|\!|}
\newcommand{\FPS}[1]{[\![#1]\!]}  
\newcommand{\FLS}[1]{(\!(#1)\!)}  
\newcommand{\one}{\mathbf{1}}
\newcommand{\fermionfock}{\mathfrak{F}}
\newcommand{\ii}{\mathbf{i}}
\newcommand{\pp}{\mathbf{p}}
\newcommand{\xx}{\mathbf{x}}
\newcommand{\yy}{\mathbf{y}}
\newcommand{\bal}{\boldsymbol{\alpha}}
\newcommand{\bbe}{\boldsymbol{\beta}}
\newcommand{\bbb}{\mathsf{b}}
\newcommand{\gl}{\mathfrak{gl}}
\newcommand{\mfa}{\mathfrak{a}}
\newcommand{\mcC}{\mathcal{C}}
\newcommand{\mcP}{\mathcal{P}}
\newcommand{\ZZ}{\mathbb{Z}}
\newcommand{\CC}{\mathbb{C}}
\newcommand{\End}{\operatorname{End}}
\renewcommand{\dotsc}{\cdots}  
\definecolor{darkred}{rgb}{0.7,0,0} 
\newcommand{\defn}[1]{{\color{darkred}\emph{#1}}} 
\begin{document}

\title{On the Boson-Fermion Correspondence for Factorial Schur Functions}

\author{Daniel Bump}
\address[D.~Bump]{Department of Mathematics, Stanford University, Stanford, CA 94305-2125}
\email{bump@math.stanford.edu}
\urladdr{https://math.stanford.edu/~bump/}

\author{Andrew Hardt}
\address[A.~Hardt]{Department of Mathematics, University of Illinois Urbana-Champaign, Urbana, IL 61801}
\email{ahardt@illinois.edu}
\urladdr{https://andyhardt.github.io/}

\author{Travis Scrimshaw}
\address[T.~Scrimshaw]{Department of Mathematics, Hokkaido University, 5 Ch\=ome Kita 8 J\=onishi, Kita Ward, Sapporo, Hokkaid\=o 060-0808}
\email{tcscrims@gmail.com}
\urladdr{https://tscrim.github.io/}

\maketitle

\begin{abstract}
We give an algebraic (non-analytic) proof of the deformed boson-fermion Fock space construction of Molev's double supersymmetric Schur functions, among other results, from our previous paper.
In other words, we make no assumptions on the variables and parameters.
By specializing to a finite number of variables and shifting parameters, we recover the factorial Schur functions.
Furthermore, we realize the bosonic construction through a representation of a completion of the infinite rank general linear Lie algebra.
\end{abstract}


\section{Introduction}

This is a companion to our other paper~\cite{BHS}; here, we give purely algebraic proofs when one set of parameters is zero.
In more detail, we had to impose certain analytic conditions in~\cite{BHS} on the two sets of parameters $\bal = (\alpha_i)_{i \in \ZZ}$ and $\bbe = (\beta_i)_{i\in\ZZ}$ (along with the auxiliary indeterminates).
Yet, if we take $\bbe = 0$ (that is, $\beta_i = 0$ for all $i \in \ZZ$), then we can prove our results without imposing any conditions on $\bal$ (or on the supersymmetric function variables $\xx$ and $\yy$) by using Laurent series rings such as $\ZZ[\bal]\FLS{z}$.
This paper provides the necessary algebraic proofs to results that previously had analytic assumptions.
In other words, we can work over the coefficient rings $\ZZ[\bal]$ instead of $\CC$ (with $\bal, \bbe \subseteq \CC$).
All of the other results from~\cite{BHS} will follow without modification; in particular, we have the following results (taking $\bbe = 0$).

\begin{theorem}
\label{thm:boson_fermion}
Let $\lambda$ be a partition.
There exists an $\bal$ deformation of the classical boson-fermion correspondence such that the image of the basis vector $\ket{\lambda}$ of fermionic Fock space is Molev's double supersymmetric Schur function~\cite{MolevFactorialSupersymmetric,Molev09} of shape $\lambda$ with the parameters reindexed by $\alpha_i\mapsto \alpha_{1-i}$.
Moreover, the image of $\ket{\lambda}$ under a natural dual version equals Molev's dual Schur functions.
\end{theorem}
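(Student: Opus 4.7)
The plan is to transcribe the argument of \cite{BHS} into the formal algebraic setting, systematically replacing each convergent analytic expression by a formal identity in a suitable Laurent series ring such as $\ZZ[\bal]\FLS{z}$. I would first recall the undeformed boson-fermion correspondence $\Phi_0 \colon \fermionfock \to \bosonfock$, which sends $\ket{\lambda}$ to the ordinary Schur function $s_\lambda$; this part is purely algebraic from the outset and survives verbatim. The $\bal$-deformation is then encoded by modifying the vertex operators realizing $\Phi_0$ through generating series involving the parameters $\alpha_i$. Setting $\bbe = 0$ means only one half of the deformation appears, so every operator naturally lives in $\End(\bosonfock)\FLS{z}$ and there is no need to invoke any convergence hypothesis.

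Next, I would verify the deformed Heisenberg/Clifford-type relations by a direct formal computation: the $\bal$-dependent factor commutes past the boson generators with an explicit multiplicative shift, and the standard contour argument passes over to the formal residue calculus on $\ZZ[\bal]\FLS{z}$. This establishes that $\Phi$ is a well-defined isomorphism of $\ZZ[\bal]$-modules intertwining the (deformed) fermion and boson actions.

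To identify $\Phi(\ket{\lambda})$ with Molev's double supersymmetric Schur function, I would express $\ket{\lambda}$ as a product of fermion creation operators acting on the vacuum, apply $\Phi$, and then move the annihilation components past the creation components using the deformed commutation rules to reach a vertex-operator normal form. Comparing this normal form against Molev's Jacobi-Trudi determinantal formula (or equivalently his tableau formula) for $s_\lambda(\xx/\yy \,\dv\, \bal)$ then yields the identification, while the reindexing $\alpha_i \mapsto \alpha_{1-i}$ emerges from the convention linking partition rows to positions in the Maya diagram. The dual statement follows by running the same recipe with the dual vertex operators, reproducing Molev's dual Schur functions.

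The main obstacle is not any new algebraic content but rather the bookkeeping required to give a precise meaning to every composed vertex operator, infinite product, and exponential inside an explicit completion of Fock space over $\ZZ[\bal]\FLS{z}$. In \cite{BHS} the analytic hypotheses rendered these manipulations automatic via absolute convergence; here, one must pin down the correct topological ring in which each identity lives and verify that specializing to $\bbe = 0$ preserves every relation previously proved analytically. Once this is carried out, the remaining results of \cite{BHS} transfer without further modification.
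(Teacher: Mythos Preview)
Your high-level strategy matches the paper's: reprove the foundational lemmas of \cite{BHS} algebraically so that the remainder of that argument carries over verbatim. The paper says exactly this---the proof of Theorem~\ref{thm:boson_fermion} is inherited from \cite{BHS}, and what is new here are algebraic substitutes for the ingredients that previously required analytic hypotheses.

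Where your proposal has a real gap is in the assessment that ``the main obstacle is not any new algebraic content but rather the bookkeeping.'' The paper shows otherwise. Two of the key inputs in \cite{BHS}---the Heisenberg relation $[J_k^{(\bal)}, J_\ell^{(\bal)}] = k\delta_{k,-\ell}\cdot\mathbf{1}$ and the associative identity $J_k^{(\bal)} J_\ell^{(\bal)} = J_{k+\ell}^{(\bal)}$---were proved there via formal distribution calculus under analytic assumptions, and the paper states explicitly that those arguments do \emph{not} simply transfer: ``we will not use any analytic assumptions nor will we use formal distribution calculus \dots\ Hence, we have an entirely new proof.'' What replaces them are two nontrivial symmetric-function identities (Propositions~\ref{prop:cocyclecase} and~\ref{prop:heidentity}), each established by a bespoke sign-reversing involution on pairs $(C,D)$ of multisets and sets of indices. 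These identities are then fed into a direct evaluation of the cocycle $\varphi(J_k^{(\bal)}, J_\ell^{(\bal)})$ and of the matrix product $\sum_a A^k_{p,a} A^\ell_{a,q}$. Separately, the vacuum expectation $\bra{\varnothing}\psi(z|\bal)\psi^*(w|\bal)\ket{\varnothing} = z/(z-w)$ (Theorem~\ref{thm:main_theorem}) is reproved algebraically using explicit change-of-basis formulas between ordinary and shifted powers (Proposition~\ref{prop:shifted_cob}, Corollary~\ref{cor:product_cob_identity}). Your sketch does not anticipate any of this combinatorial work; the phrase ``the standard contour argument passes over to the formal residue calculus'' is precisely the step that fails without it.
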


\begin{theorem}
\label{thm:vertex_model_equivalence}
The deformed half vertex operators correspond to row transfer matrices of known natural solvable five vertex lattice models with row and column parameters.
\end{theorem}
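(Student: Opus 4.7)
The plan is to compute the matrix coefficients of the deformed half vertex operators in the fermionic basis of $\fermionfock$, using the boson-fermion correspondence already established in Theorem~\ref{thm:boson_fermion}, and then to identify each such matrix coefficient with the partition function of a single row of the five vertex lattice model whose boundary data on the vertical edges are prescribed by the Maya diagrams of the source and target partitions. Since operators on $\fermionfock$ are determined by their matrix coefficients in the fermionic basis, a match of partition functions upgrades immediately to an equality of operators, which is precisely the claim.

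First I would expand the deformed half vertex operators $\Gamma_\pm(z)$ as formal exponentials in the deformed Heisenberg generators, viewed as elements of $\End(\fermionfock)\FLS{z}$ over the coefficient ring $\ZZ[\bal]$. Under the simplification $\bbe=0$, one of the two infinite products trivializes, and the action $\Gamma_\pm(z)\ket{\mu}$ collapses into a sum over partitions $\lambda$ such that $\lambda/\mu$ (or $\mu/\lambda$) is a horizontal strip, with each term a product over the cells of the strip of a simple rational weight depending on $z$ and the column-indexed parameter. Schematically,
\[
\bra{\lambda}\Gamma_\pm(z)\ket{\mu} \;=\; \prod_{c\in\,\lambda/\mu\text{ or }\mu/\lambda} w_\pm\!\left(z,\alpha_{\bullet(c)}\right),
\]
with the reindexing $\alpha_i \mapsto \alpha_{1-i}$ inherited from Theorem~\ref{thm:boson_fermion}.

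Next I would invoke the standard dictionary between horizontal strips and admissible single-row configurations of a five vertex model. The particle/hole data of the Maya diagrams of $\mu$ and $\lambda$ become the boundary states on the bottom and top vertical edges of the row, while the cells of the horizontal strip correspond precisely to the unique particle-creating vertex type among the five admissible vertices; the remaining four types encode the propagation of particles and holes along the row. The row transfer matrix entry is then a product of local Boltzmann weights indexed by the columns, and a column-by-column comparison identifies these weights with the factors $w_\pm$ above, with column parameters $\alpha_{1-i}$ and row parameter $z$. The resulting model is then recognized as one of the known solvable five vertex models from the literature.

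The main obstacle I anticipate is the careful bookkeeping of signs and indexing conventions in passing from the normal-ordered fermionic expressions to the cell-by-cell horizontal strip product, and then in aligning column indices with the Maya diagram positions used by the lattice model. A secondary concern is staying honest about the formal setting: rather than invoking any convergence, every manipulation must be justified inside $\End(\fermionfock)\FLS{z}$ over $\ZZ[\bal]$, so I must confirm that both the operator product defining the row transfer matrix and the exponential expansion of $\Gamma_\pm(z)$ live naturally in this ring. The dual statement, relevant for the dual Schur part of Theorem~\ref{thm:boson_fermion}, follows by the analogous argument applied to the dual vertex model.
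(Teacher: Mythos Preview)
The paper does not give a self-contained proof of this theorem. The introduction states explicitly that the proofs of both Theorem~\ref{thm:boson_fermion} and Theorem~\ref{thm:vertex_model_equivalence} ``would be copied verbatim'' from the companion paper~\cite{BHS} (specifically \cite[Thm.~6.10, Cor.~6.11]{BHS}), and the body of the present paper supplies no further argument toward Theorem~\ref{thm:vertex_model_equivalence}. The paper's actual contribution is to redo, purely algebraically over $\ZZ[\bal]$, the \emph{inputs} on which those proofs depend---most notably Theorem~\ref{thm:main_theorem} (an algebraic version of \cite[Prop.~4.1]{BHS}) and the Heisenberg relations in Section~\ref{sec:lie_algebra}---so that the verbatim arguments of~\cite{BHS} go through without any analytic hypothesis when $\bbe = 0$.

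Your plan is a reasonable outline of how such a proof typically goes and is broadly consistent with what one expects in~\cite{BHS}: compute the branching coefficients of the half vertex operators in the fermionic basis, observe that they are supported on horizontal (or vertical) strips with a weight that factorizes over cells, and match column-by-column with the local Boltzmann weights of a single row of the lattice model. But as written it is a sketch rather than a proof: you do not actually carry out the matrix-coefficient computation, specify the Boltzmann weights of the target five vertex model, or verify the column-by-column identification (precisely the step you yourself flag as the main obstacle). Since the present paper offers nothing beyond the pointer to~\cite{BHS}, there is no finer comparison to make here; completing your argument requires either reproducing the relevant computations from~\cite{BHS} or consulting that paper directly.
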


We refer to~\cite[Thm.~5.1, Thm.~6.10, Cor.~6.11]{BHS} (with taking $\bbe = 0$) for precise statements and an introduction to this topic, including historical references.
Other than the proofs of Theorem~\ref{thm:boson_fermion} and Theorem~\ref{thm:vertex_model_equivalence} (which would be copied verbatim), this paper is written to be self-contained and have numerous examples.

One key advantage of taking $\bbe = 0$, beyond allowing formal/algebraic proofs, is that we perform computations that range over finite sums.
Therefore, the complexity of working with the constructions and corresponding symmetric functions is reduced.
Additionally, it allows us to give a representation theoretic interpretation (without any analytic conditions) of our deformed boson-fermion construction.
Indeed, in Section~\ref{sec:lie_algebra}, we work in a completion of $\gl_{\infty}$ that we call near (upper) triangular matrices, which has an associative algebra structure.
The Lie algebra of this completion has appeared implicitly though the corresponding Lie group in~\cite[Ch.~14]{KacInfinite}.
From this point we follow the classical $\bal = 0$ construction given in~\cite[Ch.~14]{KacInfinite} (see also \cite[Ch,~4--6]{KacRaina}) by taking a central extension to yield the Heisenberg algebra.
Two key computations in our proof are identities of symmetric functions in different sets of variables (Proposition~\ref{prop:cocyclecase} and Proposition~\ref{prop:heidentity}).

In Section~\ref{sec:double_funcs}, we recall some constructions of Molev's double supersymmetric Schur functions~\cite{MolevFactorialSupersymmetric,Molev09}.
Using the generating series description of the double elementary and homogeneous supersymmetric functions and some basic identities of shifted powers from Section~\ref{sec:prelims}, we obtain a new proof of the action of the involution $\omega$ that interchanges $h_k(\xx/\yy) \leftrightarrow e_k(\xx/\yy)$ on the double supersymmetric functions (Proposition~\ref{prop:omega};~\cite[Cor.~5.14]{BHS}).

In Section~\ref{sec:fermion_fields}, we give the main result, a fully algebraic proof of the key result~\cite[Prop.~4.1]{BHS} (Theorem~\ref{thm:main_theorem}) that yields the deformed boson-fermion correspondence.
We then detail a number of additional changes from~\cite{BHS} in this setting.
In particular, we can see that the classical supersymmetric functions are finite sums of double Schur functions of descending degree (i.e.\ lower filtered).
These in turn are finite sums in the usual supersymmetric function bases.
Therefore, the product of any two double supersymmetric functions is a finite sum, which includes the Murnagham--Nakayama rule~\cite[Thm~5.23]{BHS} and the product of two double Schur functions.

In~\cite[Sec.~5.4]{BHS}, we stated that the raising operator formula from~\cite{Fun12} does not match the factored contour integral formula from~\cite[Eq.~(66)]{BHS} (but does reduce to the classical proof from, \textit{e.g.},~\cite{Baker96} when $\bal = \bbe = 0$).
In Section~\ref{sec:raising_operators}, we give a more detailed analysis of the differences between the formulas when $\bbe = 0$.

Lastly, we give a detailed analysis of the (skew) Pieri rule~\cite[Cor.~6.15]{BHS} in Section~\ref{sec:skew_pieri} and perform a comparison with~\cite[Prop.~3.4]{Fun12}.
While we are unable to show the Graham positivity, we can show in the (straight shape) Pieri rule that our formulas do not have any monomial cancellations as opposed to~\cite{Fun12}. 

On the other hand, a cost of the $\bbe = 0$ specialization, beyond the loss of generality, is that it conceals a symmetry between the $\bal$ and $\bbe$ variables.
Yet, we conclude the introduction by noting that if we instead set $\bal = 0$ instead of $\bbe$, we end up working with the dual objects, and thus we obtain equivalent statements by~\cite[Sec.~4.4, Thm.~5.29]{BHS}.
This is essentially the same as applying the natural Clifford algebra adjoint/dual.
As such, we also obtain analogous algebraic results for the dual Schur functions.

\subsection*{Acknowledgements}

The authors thank Slava Naprienko for numerous invaluable conversations for this paper and for our previous paper~\cite{BHS}.
This work benefited from computations performed using \textsc{SageMath}~\cite{sage}.

T.S.~was partially supported by Grant-in-Aid for JSPS Fellows 21F51028 and for Scientific Research for Early-Career Scientists 23K12983.
A.H.~was partially supported by NSF RTG grant DMS-1937241.

\section{Preliminaries}
\label{sec:prelims}

To make this paper self-contained, we will set the necessary notation and give the requisite definitions.
We note that our notation will match~\cite{BHS} except we will use the shorthand omitting the $\bbe = 0$ parameters, such as $J_k^{(\bal)} = J_k^{(\bal; 0)}$.

Let $\bal = (\cdots, \alpha_{-1}, \alpha_0, \alpha_1, \cdots)$ be a set of commuting parameters indexed by $\ZZ$.
Let $\xx = (x_1, x_2, \cdots)$ and $\yy = (y_1, y_2, \cdots)$ be commuting indeterminants, and let $\xx_n = (x_1, \dotsc, x_n)$ and $\yy_n = (y_1, \dotsc, y_n)$ formed by setting $x_i = y_i = 0$ for all $i > n$.
Let $\sigma$ and $\iota$ be automorphisms of $\ZZ[\bal]$ defined by $\sigma \alpha_i \mapsto \alpha_{i+1}$ and $\iota \alpha_i \mapsto \alpha_{1-i}$, respectively.
We will often consider these as acting only on $\bal$; \textit{e.g.}, $\sigma \bal$.

\begin{remark}
\label{rem:formal_contours}
In order to transfer our algebraic statements into analytic statements, we only require the simple condition that $\sup \{ \abs{\alpha_i} \mid i \in \ZZ\} < \infty$.
This will allow us to change our formal contour integrals $\oint f(z) \, \frac{dz}{2\pi\ii} = f_{-1}$, where $f(z) = \sum_{i=k}^{\infty} f_i z^i \in \ZZ\FLS{z}$, into actual contour integrals $\oint_{\eta} f(z) \, \frac{dz}{2\pi\ii}$ with the contour $\eta$ being a small counterclockwise circle around zero.
In particular, the circle will have radius $r < \abs{\alpha_i^{-1}}$ for all $i \in \ZZ$ (by convention, $\abs{0^{-1}} = \infty$).
Indeed, the poles of our functions $f$ will occur at $\alpha_i^{-1}$ for certain $i \in \ZZ$ and at $0$ unless otherwise stated.
\end{remark}

The (classical) \defn{elementary symmetric functions} and \defn{homogeneous symmetric functions} are
\[
e_k(\xx_n) = \sum_{1 \leqslant i_1 < \cdots < i_k \leqslant n} x_{i_1} \cdots x_{i_k},
\qquad\qquad
h_k(\xx_n) = \sum_{1 \leqslant i_1 \leqslant \cdots \leqslant i_k \leqslant n} x_{i_1} \cdots x_{i_k},
\]
respectively.
For brevity, we will use the following notation
\[
e_k(-\bal_{(i,j)}) = e_k(-\alpha_{i+1}, \dotsc, -\alpha_{j-1}),
\qquad\qquad
h_k(\bal_{[i,j]}) = h_k(\alpha_i, \dotsc, \alpha_j).
\]

\subsection{Shifted powers}
\label{sec:shifted_powers}

The \defn{shifted powers} are defined as
\[
(z^{-1}|\bal)^k := \prod_{i=k+1}^0 (z^{-1} - \alpha_i)^{-1} \prod_{i=1}^k (z^{-1} - \alpha_i),
\]
and we note that at most one of these products is not $1$.
We have defined the shifted powers in terms of $z^{-1}$ in order to have them belong to $\ZZ[\bal]\FLS{z}$.
Like in~\cite{BHS}, we will never use $(z^{-1}|\bal)^k$ to denote the $k$-fold product of $(z^{-1}|\bal)$ with itself, so there will be no danger of confusion.
The set $\{(z^{-1}|\bal)^k \mid k \in \ZZ\}$ forms a basis for $\ZZ[\bal]\FLS{z}$ by triangularity as $(z^{-1}|\bal)^k$ has valuation $-k$ (under the standard valuation of Laurent polynomials/series).

As a consequence, we have a recursive algorithm for expressing any formal Laurent series in a shifted power basis.
However, we will find it useful to have an explicit expression for $z^k$ in terms of shifted powers.
To do so, begin by noting two useful relations:
\begin{subequations}
\label{eq:z_times_shifted}
\begin{align}
z^{-1} (z^{-1}|\bal)^k & = (z^{-1}|\bal)^{k+1} + \alpha_{k+1} (z^{-1}|\bal)^k, \label{eq:z_times_zs}
\\
z (z^{-1}|\bal)^k &= \sum_{m=0}^{\infty} (-1)^m \alpha_k \cdots \alpha_{k-m+1} (z^{-1}|\bal)^{k-m-1}. \label{eq:zinv_times_zs}
\end{align}
\end{subequations}
Equation~\eqref{eq:z_times_zs} follows from a direct computation, and~\eqref{eq:zinv_times_zs} is given by multiplying~\eqref{eq:z_times_zs} by $z$ on both sides and iterating the result. 

\begin{example}
We have
\begin{align*}
z^{-1} (z^{-1}|\bal)^{-3} & = \frac{z^{-1}}{(z^{-1}-\alpha_0)(z^{-1}-\alpha_{-1})(z^{-1}-\alpha_{-2})}
\\ & = \frac{1}{(z^{-1}-\alpha_0)(z^{-1}-\alpha_{-1})} + \frac{\alpha_{-2}}{(z^{-1}-\alpha_0)(z^{-1}-\alpha_{-1})(z^{-1}-\alpha_{-2})}
\\ & = (z^{-1}|\bal)^{-2} + \alpha_{-2} (z^{-1}|\bal)^{-3},
\allowdisplaybreaks \\
z (z^{-1}|\bal)^2 & = (z^{-1}-\alpha_1)(1 - \alpha_2 z) = (z^{-1}|\bal)^1 - \alpha_2(1 - \alpha_1 z)
\\ & = (z^{-1}|\bal)^1 - \alpha_2 (z^{-1}|\bal)^0 + \alpha_2 \alpha_1 z (z^{-1}|\bal)^0,
\end{align*}
\end{example}

\begin{proposition}
\label{prop:shifted_cob}
For $m > 0$, we have
\begin{align*}
	z^{-m} &= \sum_{k=0}^{m}h_{m-k}(\bal_{[1,k+1]}) (z^{-1}|\bal)^k, &
	(z^{-1}|\bal)^m &= \sum_{k=0}^m e_{m-k}(-\bal_{(0,m+1)}) z^{-k}, \\
	z^m &= \sum_{k=m}^{\infty} e_{k-m}(-\bal_{(1-k,1)}) (z^{-1} | \bal)^{-k}, &
	(z^{-1}|\bal)^{-m} &= \sum_{k=m}^{\infty}h_{k-m}(\bal_{[1-m,0]}) z ^k.
\end{align*}
\end{proposition}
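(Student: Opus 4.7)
The plan is to split the four identities into two pairs. Formulas two and four express shifted powers in terms of ordinary powers and follow by direct expansion of the defining product. Indeed, formula two comes from $(z^{-1}|\bal)^m = \prod_{i=1}^m(z^{-1}-\alpha_i)$ expanded as $\sum_{k=0}^m e_{m-k}(-\alpha_1,\dots,-\alpha_m)\,z^{-k}$, matching the claim since $-\bal_{(0,m+1)} = (-\alpha_1,\dots,-\alpha_m)$. Formula four rewrites
\[
(z^{-1}|\bal)^{-m} = z^m\prod_{i=1-m}^{0}(1-\alpha_i z)^{-1}
\]
and applies the classical generating series $\prod_i(1-a_iz)^{-1} = \sum_{n\geqslant 0} h_n(a)\,z^n$ with alphabet $\bal_{[1-m,0]}$, followed by the reindexing $k = n+m$.

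For formulas one and three I would induct on $m$. In formula one, write $z^{-m} = z^{-1}\cdot z^{-(m-1)}$ and distribute \eqref{eq:z_times_zs} term by term; after reindexing, the coefficient of $(z^{-1}|\bal)^j$ becomes
\[
h_{m-j}(\bal_{[1,j]}) + \alpha_{j+1}\,h_{m-j-1}(\bal_{[1,j+1]}),
\]
which collapses to $h_{m-j}(\bal_{[1,j+1]})$ by the standard recursion $h_n(a_1,\dots,a_r) = h_n(a_1,\dots,a_{r-1}) + a_r\,h_{n-1}(a_1,\dots,a_r)$. Formula three is handled similarly via $z^m = z\cdot z^{m-1}$ and \eqref{eq:zinv_times_zs}: after reindexing the resulting double sum by the total degree, the coefficient of $(z^{-1}|\bal)^{-n}$ becomes
\[
\sum_{k=m-1}^{n-1} e_{k-m+1}(-\alpha_{2-k},\dots,-\alpha_0)\cdot e_{n-k-1}(-\alpha_{2-n},\dots,-\alpha_{-k}),
\]
in which the two alphabets are disjoint and together form $(-\alpha_{2-n},\dots,-\alpha_0)$. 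The splitting identity $e_r(A\sqcup B) = \sum_{i+j=r} e_i(A)\,e_j(B)$ collapses this sum to $e_{n-m}(-\bal_{(1-n,1)})$, as required.

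The main obstacle will be formula three, since the sum is infinite and the alphabet $\bal_{(1-k,1)}$ shifts with the summation index $k$. Identifying the iterated products $(-1)^\ell \alpha_{-k}\cdots\alpha_{-k-\ell+1}$ produced by \eqref{eq:zinv_times_zs} with an elementary symmetric polynomial in the complementary segment of the full alphabet $\bal_{[2-n,0]}$ requires careful bookkeeping of indices, but once the setup is correct the splitting identity closes the induction and the proposition follows. As a sanity check one may note that formulas one and two (respectively three and four) yield unit-diagonal triangular matrices whose product equals the identity by the classical orthogonality $\sum_k (-1)^k e_k(A)\,h_{n-k}(A) = \delta_{n,0}$ applied to appropriate segments of $\bal$.
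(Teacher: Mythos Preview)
Your treatment of formulas two and four matches the paper exactly, and your induction for formula one via \eqref{eq:z_times_zs} and the recursion $h_n(a_1,\dots,a_r)=h_n(a_1,\dots,a_{r-1})+a_r h_{n-1}(a_1,\dots,a_r)$ is correct and is precisely what the paper does.

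There is, however, a genuine gap in your argument for formula three. In the sum
\[
\sum_{k=m-1}^{n-1} e_{k-m+1}(-\alpha_{2-k},\dots,-\alpha_0)\cdot e_{n-k-1}(-\alpha_{2-n},\dots,-\alpha_{-k}),
\]
the two alphabets are \emph{not} a partition of $(-\alpha_{2-n},\dots,-\alpha_0)$: the first has indices $\{2-k,\dots,0\}$ and the second $\{2-n,\dots,-k\}$, so the index $1-k$ is always missing. Moreover, the partition changes with $k$, so the splitting identity $e_r(A\sqcup B)=\sum_{i+j=r}e_i(A)e_j(B)$ (which requires a single fixed decomposition) does not apply. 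The displayed sum does equal $e_{n-m}(-\bal_{(1-n,1)})$, but a different argument is needed.

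One clean fix: write $y_i=-\alpha_{1-i}$, so the claim becomes
\[
\sum_{k=m-1}^{n-1} e_{k-m+1}(y_1,\dots,y_{k-1})\,y_{k+1}\cdots y_{n-1}=e_{n-m}(y_1,\dots,y_{n-1}).
\]
Setting $T_k=e_{k-m+1}(y_1,\dots,y_k)\,y_{k+1}\cdots y_{n-1}$ and using $e_j(y_1,\dots,y_k)=e_j(y_1,\dots,y_{k-1})+y_k e_{j-1}(y_1,\dots,y_{k-1})$ shows the $k$-th summand equals $T_k-T_{k-1}$, and the sum telescopes to $T_{n-1}-T_{m-2}=e_{n-m}(y_1,\dots,y_{n-1})$. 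Alternatively, classify each monomial in $e_{n-m}(y_1,\dots,y_{n-1})$ by the largest index $k$ \emph{not} appearing in it; this picks out exactly one term of the sum. The paper itself only says the proof of formula three is ``similar'' and gives no further detail, so your overall strategy is the intended one---you just need to replace the splitting identity with one of these arguments. (Your sanity check at the end has the same issue: the alphabets in formulas one and two vary with the index, so the classical $\sum(-1)^k e_k h_{n-k}=\delta_{n0}$ does not apply directly; compare Corollary~\ref{cor:product_cob_identity}.)
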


\begin{proof}
For $m > 0$, the Laurent expansion of $(z^{-1}|\bal)^m$ is classical, and furthermore
\[
(z^{-1}|\bal)^{-m} = \prod_{i=0}^{m-1}\frac{1}{z^{-1}-\alpha_{-i}} = z^m \prod_{i=0}^{m-1}\frac{1}{1-\alpha_{-i} z} = z^m \sum_{k=0}^{\infty} h_k(\bal_{[1-m,0]}) z^k.
\]
Thus we just need to consider the expansion of $z^{-m}$ in terms of the shifted powers.
For $m > 0$, by applying~\eqref{eq:z_times_zs} in a straightforward induction argument, we obtain
\begin{align*}
z^{-m} & = z^{1-m}\bigl( (z^{-1}|\bal) + \alpha_1 \bigr) = z^{2-m}\bigl( (z^{-1}|\bal)^2 + (\alpha_1 + \alpha_2)(z^{-1}|\bal) + \alpha_1^2 \bigr)
\\ & = z^{3-m}\bigl( (z^{-1}|\bal)^3 + h_1(\alpha_1, \dotsc, \alpha_3) (z^{-1}|\bal)^2 + h_2(\alpha_1, \alpha_2) (z^{-1}|\bal) + \alpha_1^3 \bigr)
\\ & = \cdots
\\ & = \sum_{k=0}^m h_k(\bal_{[1,m-k+1]}) (z^{-1}|\bal)^{k-m} = \sum_{k=0}^m h_{m-k}(\bal_{[1,k+1]})(z^{-1}|\bal)^k.
\end{align*}
The proof for $m < 0$ is similar.
\end{proof}

Additionally, we will use the fact that $\{ \frac{1}{(z|\bal)^k} \mid k \in \ZZ \}$ is another basis of $\CC\FLS{z}$, which can be proven by triangularity or by using
\begin{equation}
\label{eq:shifted_inversion}
\frac{1}{(z^{-1}|\bal)^k} = (z^{-1}|\iota \bal)^{-k} = (z^{-1}|\sigma^k\bal)^{-k}.
\end{equation}
Note that~\eqref{eq:shifted_inversion} also allows us to compute explicit formulas from Proposition~\ref{prop:shifted_cob}.
The triangularity follows by noting that the valuation of $\frac{1}{(z^{-1}|\bal)^k}$ is $k$.

Next, we compute how the shift $\sigma$ acts on these bases.

\begin{proposition}
\label{prop:shifted_action}
We have
\begin{align*}
(z^{-1}|\sigma^{-1}\bal)^k & = (z^{-1}|\bal)^k + (\alpha_k - \alpha_0) (z^{-1}|\bal)^{k-1},
\\
\frac{1}{(z^{-1}|\sigma\bal)^k} & = \frac{1}{(z^{-1}|\bal)^k} + \frac{\alpha_{k+1} - \alpha_1}{(z^{-1}|\bal)^{k+1}}.
\end{align*}
\end{proposition}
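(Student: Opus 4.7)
The plan is to reduce both identities to trivial algebraic equalities after clearing a common factor, using a basic multiplicative recursion for the shifted powers that holds uniformly for all $k \in \ZZ$.

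First, I would establish the recursion
\[
(z^{-1}|\bal)^k = (z^{-1}-\alpha_k)\,(z^{-1}|\bal)^{k-1},
\]
valid for every $k \in \ZZ$. This is equivalent to equation~\eqref{eq:z_times_zs} after multiplying by $z$ and rearranging, but it can also be verified directly from the piecewise product definition by checking the three cases $k \geq 1$ (a product of linear factors gains a factor of $z^{-1}-\alpha_k$), $k = 0$ (both sides equal $1$ after cancellation against the $\alpha_0$ factor), and $k \leq -1$ (the denominator loses a factor of $z^{-1}-\alpha_k$). The same logic, applied after shifting indices in $\bal$, yields the two companion recursions
\[
(z^{-1}|\sigma^{-1}\bal)^k = (z^{-1}-\alpha_0)\,(z^{-1}|\bal)^{k-1}, \qquad (z^{-1}|\sigma\bal)^k = \frac{(z^{-1}|\bal)^{k+1}}{z^{-1}-\alpha_1},
\]
since the explicit products for $\sigma^{\pm 1}\bal$ differ from those for $\bal$ only by substituting a single boundary factor involving $\alpha_0$ or $\alpha_1$.

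Next, for the first identity I would divide both sides by $(z^{-1}|\bal)^{k-1}$ and apply the first two recursions above. The identity collapses to the trivial statement
\[
(z^{-1}-\alpha_0) = (z^{-1}-\alpha_k) + (\alpha_k - \alpha_0).
\]
For the second identity, I would instead multiply both sides by $(z^{-1}|\bal)^{k+1}$ and apply the first and third recursions, reducing it to
\[
(z^{-1}-\alpha_1) = (z^{-1}-\alpha_{k+1}) + (\alpha_{k+1} - \alpha_1).
\]

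There is no serious obstacle here. The main care point is verifying that the multiplicative recursion for $(z^{-1}|\bal)^k$ genuinely holds uniformly in $k$ across the boundary $k = 0, 1$, where the defining product switches from polynomial to rational; this is immediate but deserves a brief explicit check. An alternative (and equivalent) route would be to derive the second identity from the first by applying the inversion identity~\eqref{eq:shifted_inversion} together with a suitable index shift, but the parallel direct reductions above seem both shorter and more symmetric.
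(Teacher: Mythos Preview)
Your proposal is correct and essentially coincides with the paper's proof: the paper says to factor $(z^{-1}|\bal)^{k-1}$ from the right-hand side of the first identity and to combine the two fractions on the right-hand side of the second, which are exactly your divisions/multiplications in reverse. The only difference is that you make the underlying recursions $(z^{-1}|\bal)^k = (z^{-1}-\alpha_k)(z^{-1}|\bal)^{k-1}$, $(z^{-1}|\sigma^{-1}\bal)^k = (z^{-1}-\alpha_0)(z^{-1}|\bal)^{k-1}$, and $(z^{-1}|\sigma\bal)^k = (z^{-1}|\bal)^{k+1}/(z^{-1}-\alpha_1)$ explicit, whereas the paper leaves them implicit in the phrase ``direct computation.''
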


\begin{proof}
By direct computation.
For the first equality, factor our the $(z^{-1}|\bal)^{k-1}$ from the right hand side.
For the second, combine the two fractions on the right hand side.
\end{proof}

We can define inner products on $\CC\FLS{z}$ such that these shifted power bases are orthonormal by using the following result.

\begin{proposition}[{\cite[Prop.~2.3]{BHS}}]
\label{prop:orthonormality}
\;
\[
\oint \frac{z^{-1} (z^{-1}|\bal)^{n-1}}{(z^{-1}|\bal)^k} \frac{dz}{2\pi\ii z}
= \oint (z^{-1}|\sigma^k \bal)^{n-k-1} \frac{dz}{2\pi\ii z^2}
= \delta_{nk}.
\]
\end{proposition}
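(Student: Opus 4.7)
The plan is to reduce both integrals to a single one by first establishing the multiplicative identity
\[
(z^{-1}|\bal)^a \cdot (z^{-1}|\sigma^a\bal)^b = (z^{-1}|\bal)^{a+b}
\]
for all integers $a,b$, and then to evaluate the resulting integral by a short valuation argument.

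To prove the multiplicative identity, I would proceed directly from the definition. Each shifted power is a product of linear factors $(z^{-1}-\alpha_i)^{\pm 1}$ over a specified range of indices, and the range defining $(z^{-1}|\sigma^a\bal)^b$ is precisely the $a$-shift of the range defining $(z^{-1}|\bal)^b$. So the two products combine by concatenation of index ranges. The only bookkeeping is checking that the cancellations work correctly when $a+b$ has a different sign from $a$ or $b$; in each case the inverse factors telescope to leave exactly the product indexed by the range defining $(z^{-1}|\bal)^{a+b}$. Setting $a=k$ and $b=n-1-k$, this identity yields
\[
\frac{(z^{-1}|\bal)^{n-1}}{(z^{-1}|\bal)^k} = (z^{-1}|\sigma^k\bal)^{n-1-k}.
\]
Since $z^{-1}/z = 1/z^2$, the two integrands in the statement coincide, so the two integrals are equal.

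To evaluate the common integral, recall that $\oint f(z)\,\frac{dz}{2\pi\ii z^2}$ extracts the coefficient of $z^1$ in $f(z)$; thus we need the coefficient of $z^1$ in $(z^{-1}|\sigma^k\bal)^m$, where $m := n-k-1$. If $m \geq 0$, then by the definition (or by Proposition~\ref{prop:shifted_cob} applied to $\sigma^k\bal$), $(z^{-1}|\sigma^k\bal)^m$ is a polynomial in $z^{-1}$ and contributes zero. If $m \leq -2$, then $(z^{-1}|\sigma^k\bal)^m$ has valuation $-m \geq 2$ as a power series in $z$, so the coefficient of $z^1$ is again zero. The only surviving case is $m = -1$, where
\[
(z^{-1}|\sigma^k\bal)^{-1} = (z^{-1} - \alpha_k)^{-1} = z + \alpha_k z^2 + \alpha_k^2 z^3 + \cdots,
\]
whose $z^1$-coefficient is $1$. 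This produces exactly $\delta_{m,-1} = \delta_{nk}$.

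The main obstacle is the bookkeeping in the multiplicative identity, since the definition of shifted powers splits into cases by the sign of the exponent and the identity can cross these cases. However, once the combinatorics of the index ranges is written out explicitly, each case reduces to an elementary cancellation, and the remainder of the argument is a direct valuation computation.
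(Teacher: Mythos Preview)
Your argument is correct. The paper does not include a self-contained proof of this proposition (it is cited from the companion paper), but the middle expression in the displayed statement, together with the telescoping identity~\eqref{eq:shifted_inversion} and the valuation arguments used immediately afterward in Propositions~\ref{prop:orthonormality_gen} and~\ref{prop:orthonormality_gen2}, amount to exactly the route you take: collapse the quotient of shifted powers via the multiplicative identity $(z^{-1}|\bal)^a\,(z^{-1}|\sigma^a\bal)^b=(z^{-1}|\bal)^{a+b}$, then read off the $z^1$-coefficient by a valuation count.
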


The next two results generalize Proposition~\ref{prop:orthonormality}. 

\begin{proposition}
\label{prop:orthonormality_gen}
For any $k \geqslant 0$ and $(i_1, \dotsc, i_k) \in \ZZ^k$, we have
\[
\oint \prod_{j=1}^k (z^{-1} - \alpha_{i_j}) \frac{dz}{2\pi\ii z^2} = 0.
\]
\end{proposition}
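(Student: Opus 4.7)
The plan is essentially to observe that the integrand is a Laurent polynomial whose expansion contains no $z^{-1}$ term, so the formal residue vanishes for trivial reasons.

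First, I would expand the product: since each factor $(z^{-1} - \alpha_{i_j})$ is a polynomial in $z^{-1}$ of degree $1$ with coefficients in $\ZZ[\bal]$, the full product $\prod_{j=1}^k (z^{-1} - \alpha_{i_j})$ is a polynomial in $z^{-1}$ of degree $k$. In particular, we may write it as
\[
\prod_{j=1}^k (z^{-1} - \alpha_{i_j}) = \sum_{m=0}^{k} c_m\, z^{-m}
\]
for some coefficients $c_m \in \ZZ[\bal]$ (with $c_0 = 1$ and $c_k = (-1)^k \alpha_{i_1}\cdots \alpha_{i_k}$; the precise values are irrelevant).

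Next, I would multiply by $z^{-2}$ to get the full integrand $\sum_{m=0}^{k} c_m\, z^{-m-2}$. Every exponent $-m - 2$ appearing here is at most $-2$, so the coefficient of $z^{-1}$ is zero. By the formal residue convention $\oint f(z)\, \frac{dz}{2\pi\ii} = f_{-1}$ set up in Remark~\ref{rem:formal_contours}, this gives
\[
\oint \prod_{j=1}^k (z^{-1} - \alpha_{i_j}) \frac{dz}{2\pi\ii z^2} = 0,
\]
as desired. The $k=0$ case (empty product equal to $1$) is consistent since $\oint z^{-2}\, \frac{dz}{2\pi\ii} = 0$.

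There is no real obstacle here: the statement is a degree-counting observation, generalizing the vanishing half of Proposition~\ref{prop:orthonormality} (the case where the shifted power $(z^{-1}|\sigma^k\bal)^{n-k-1}$ has nonnegative exponent, hence is a polynomial in $z^{-1}$). The only thing worth being careful about is the convention that the contour integral extracts the $z^{-1}$-coefficient of the integrand (including the $\frac{1}{z^2}$ factor), which is why the relevant range of exponents is $\{-2, -3, \dotsc, -k-2\}$ and avoids $-1$.
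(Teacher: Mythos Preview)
Your proof is correct and matches the paper's own argument exactly: both simply observe that the integrand is a Laurent polynomial in $z$ whose highest power is $z^{-2}$, so the $z^{-1}$-coefficient (the formal residue) is zero. One inconsequential slip: in your expansion $\sum_{m=0}^k c_m z^{-m}$ you have $c_0$ and $c_k$ swapped (the leading coefficient $c_k$ equals $1$, and the constant term $c_0$ equals $(-1)^k\alpha_{i_1}\cdots\alpha_{i_k}$), but as you note this is irrelevant to the argument.
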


\begin{proof}
This follows from the fact that the degree of the Laurent polynomial is $-2$, and so there is no contribution to the (unique) residue at $0$.
\end{proof}

\begin{proposition}
\label{prop:orthonormality_gen2}
For any $k > k' \geqslant 0$ and $(i_1, \dotsc, i_k) \in \ZZ^k$ and $(i'_1, \dotsc, i'_{k'}) \in \ZZ^{k'}$, we have
\[
\oint \frac{\prod_{j=1}^{k'} (z^{-1} - \alpha_{i_j})} {\prod_{j=1}^k (z^{-1} - \alpha_{i_j})} \frac{dz}{2\pi\ii z^2} = \delta_{k,k'+1}.
\]
\end{proposition}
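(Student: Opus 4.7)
The plan is to reduce the integral to extracting a single coefficient of a formal power series, and then to observe that under the hypothesis $k > k'$ the desired coefficient sits in a range where the series is trivially zero except in one extremal case.

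First I would clear negative powers of $z$ from both the numerator and denominator by factoring: since $\prod_{j=1}^{m}(z^{-1} - \alpha) = z^{-m}\prod_{j=1}^{m}(1 - \alpha z)$, the integrand becomes
\[
\frac{\prod_{j=1}^{k'} (z^{-1} - \alpha_{i'_j})}{\prod_{j=1}^{k} (z^{-1} - \alpha_{i_j})} \cdot \frac{1}{z^2}
= z^{k-k'-2} \cdot \frac{\prod_{j=1}^{k'}(1 - \alpha_{i'_j} z)}{\prod_{j=1}^{k}(1 - \alpha_{i_j} z)}.
\]
The rational factor on the right lies in $\ZZ[\bal]\FPS{z}$ because its denominator has constant term $1$, hence is invertible as a formal power series; moreover its constant term equals $1$.

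Next I would interpret the formal contour integral as residue extraction: $\oint f(z)\,\frac{dz}{2\pi\ii}$ takes the coefficient of $z^{-1}$ in the Laurent expansion of $f$, so our integral equals the coefficient of $z^{k'+1-k}$ in the formal power series $\prod(1 - \alpha_{i'_j} z)/\prod(1 - \alpha_{i_j} z)$. Since $k > k'$ by hypothesis, the index $k'+1-k$ is nonpositive. A genuine power series with constant term $1$ has coefficient zero at every strictly negative index and coefficient $1$ at index zero, so the integral equals $1$ exactly when $k'+1-k = 0$, i.e. $k = k'+1$, and vanishes otherwise. This is precisely $\delta_{k,k'+1}$.

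There is no real obstacle here beyond being careful that the ratio is truly in $\ZZ[\bal]\FPS{z}$ and not merely in $\ZZ[\bal]\FLS{z}$; the observation that the denominator's constant term is $1$ handles this and is what makes the argument a one-liner after the change of variables. This proof is also consistent with Proposition~\ref{prop:orthonormality_gen}, which corresponds to the degenerate case where the denominator is trivial and the integrand is a polynomial of negative-enough degree in $z^{-1}$ to have no $z^{-1}$ coefficient.
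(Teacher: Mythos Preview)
Your proof is correct and is essentially the same as the paper's own argument: both factor each $(z^{-1}-\alpha_j)^{-1}$ as $\frac{z}{1-\alpha_j z}$, reducing the integrand to $z^{k-k'-2}$ times a formal power series with constant term~$1$, and then read off that the $z^{-1}$ coefficient is $\delta_{k,k'+1}$. The paper phrases this in terms of the valuation of the integrand rather than explicitly extracting a coefficient, but the content is identical.
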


\begin{proof}
Note that the valuation of $(z^{-1} - \alpha_j)^{-1} = \frac{z}{(1 - \alpha_j z)}$ is $1$, and so if $k - k' > 1$, then resulting valuation is nonnegative.
If $k = k'+1$, then the valuation is $-1$ and it is easy to see the coefficient of $z^{-1}$ is $1$.
\end{proof}

\subsection{Symmetric function identities}

We prove some identities of symmetric functions that are variants of
the classical identity $\sum_{i=0}^n (-1)^i e_i(\xx) h_{n-i}(\xx) = \delta_{n0}$ as the input parameters now vary.
They will play a pivotal role in what follows.
There is also a $\bbe$ version of this that can be deduced from~\cite[Thm~4.4, Prop.~4.6]{BHS}.

\begin{proposition}
\label{prop:cocyclecase}
If $0 < k \leqslant \ell$, then
\begin{equation}
\label{eq:cocyclecase}
   \sum_{\substack{1 \leqslant j \leqslant \ell \\ j - \ell \leqslant i \leqslant \min (0, j - k)}} h_{i - j + \ell}(\bal_{[i,j]}) \, e_{j - i - k} (-\bal_{(i,j)})
   = k \delta_{k\ell}.
\end{equation}
\end{proposition}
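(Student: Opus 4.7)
I would first recast the double sum as a single bivariate coefficient extraction. Writing $m := j - i$, each summand $h_{\ell-m}(\bal_{[i,j]}) \, e_{m-k}(-\bal_{(i,j)})$ is exactly the coefficient $[z^{\ell - m} w^{m - k}]$ of $\frac{\prod_{r=i+1}^{j-1}(1-\alpha_r w)}{\prod_{r=i}^{j}(1-\alpha_r z)}$, and the range constraint $k \le m \le \ell$ is automatic since a negative-degree $h$ or $e$ vanishes. Using $z^{\ell-m}w^{m-k} = z^{\ell}w^{-k}\cdot(w/z)^m$, one obtains
\[
T = [z^{\ell} w^{-k}] \, U(z, w), \qquad U(z, w) := \sum_{i \le 0, \, j \ge 1} (z/w)^{j-i} \, \frac{\prod_{r=i+1}^{j-1}(1-\alpha_r w)}{\prod_{r=i}^{j}(1-\alpha_r z)},
\]
interpreted in the formal ring $\ZZ[\bal]\FPS{z, w^{-1}}$ (where $z/w = z w^{-1}$).

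The range $[i,j]$ splits at the $\{r \le 0\}/\{r \ge 1\}$ boundary and $(z/w)^{j - i} = (w/z)^{i}(z/w)^{j}$, factorizing $U = U_{-} \cdot U_{+}$ with
\[
U_{-} = \sum_{i \le 0} (w/z)^{i} \frac{\prod_{r=i+1}^{0}(1-\alpha_r w)}{\prod_{r=i}^{0}(1-\alpha_r z)}, \qquad U_{+} = \sum_{j \ge 1} (z/w)^{j} \frac{\prod_{r=1}^{j-1}(1-\alpha_r w)}{\prod_{r=1}^{j}(1-\alpha_r z)}.
\]

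The main technical step---and the hard part of the proof---is the surprising assertion that both $U_{\pm}$ are \emph{independent of $\bal$}: explicitly, $U_{-} = \frac{1}{1 - z/w}$ and $U_{+} = \frac{z/w}{1 - z/w}$. I would extract $[w^{-n}]$ from each side and reduce to the formal-power-series identity
\[
\sum_{q \ge 0}(-1)^{q} \frac{z^{q} \, e_{q}(\alpha_{1}, \dotsc, \alpha_{n+q-1})}{\prod_{r=1}^{n+q}(1-\alpha_r z)} = 1 \qquad (n \ge 1)
\]
in $\ZZ[\bal]\FPS{z}$, plus its mirror image for $U_{-}$. An induction on the truncation $Q$ shows the $Q$-th partial sum equals $\frac{\sum_{j=0}^{Q}(-1)^{j} e_{j}(\alpha_{1}, \dotsc, \alpha_{Q+n})\, z^{j}}{\prod_{r=1}^{Q+n}(1-\alpha_{r} z)}$, so the remainder has $z$-valuation at least $Q+1$ and vanishes as $Q \to \infty$. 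The induction step amounts to the convolution identity
\[
e_{k}(\alpha_{1}, \dotsc, \alpha_{Q+n}) = \sum_{q=0}^{k} e_{q}(\alpha_{1}, \dotsc, \alpha_{n+q-1}) \, e_{k-q}(\alpha_{n+q+1}, \dotsc, \alpha_{Q+n}),
\]
which I would verify bijectively: for each $k$-subset $S \subseteq \{1, \dotsc, Q+n\}$, the $n$-th element of the complement serves as the unique valid ``pivot'' $n+q$, and $q$ is the number of elements of $S$ below it.

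With these identities in hand, $U = \frac{z/w}{(1 - z/w)^{2}} = \sum_{n \ge 1} n \, (z/w)^{n}$ is manifestly $\bal$-free, and the coefficient extraction gives $T = [z^{\ell}w^{-k}]\, U = k \, \delta_{k\ell}$, as desired.
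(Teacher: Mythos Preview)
Your argument is correct and follows a genuinely different route from the paper. The paper expands each $h\cdot e$ term into monomials and builds a sign-reversing involution on the resulting index set. You instead package the whole sum as $[z^\ell w^{-k}]\,U(z,w)$, factorize $U=U_-U_+$, and show each factor is $\bal$-free; the answer $k\delta_{k\ell}$ then drops out of $\frac{z/w}{(1-z/w)^2}$. What your method buys is a very clean conceptual reason for the identity: the whole generating function is independent of $\bal$. In fact your key step is essentially the paper's Theorem~\ref{thm:main_theorem}: swapping $z\leftrightarrow w$ in $U_-$ gives precisely the vacuum expectation $\bra{\varnothing}\psi(z|\bal)\psi^*(w|\bal)\ket{\varnothing}=\tfrac{z}{z-w}$, and $U_+$ is a shifted version. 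So you have, in effect, shown that Proposition~\ref{prop:cocyclecase} follows from Theorem~\ref{thm:main_theorem}; in the paper these two results are proved by independent methods (involution for the former, Corollary~\ref{cor:product_cob_identity} for the latter), so this link is a nice extra observation. Your proof of the $\bal$-freeness itself, via the partial-sum formula and the pivot bijection for $e_k(\alpha_1,\dotsc,\alpha_{Q+n})$, is also different from the paper's use of Corollary~\ref{cor:product_cob_identity}.

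One minor comment: the phrase ``induction on $Q$'' followed by ``the induction step amounts to the convolution identity'' is slightly mismatched. A straight induction on $Q$ (passing from the $Q$-th partial sum to the $(Q{+}1)$-th) only needs the elementary recursion $e_j(X\cup\{x\})=e_j(X)+x\,e_{j-1}(X)$. Your convolution identity is exactly what is needed if you instead verify the partial-sum formula \emph{directly} for each $Q$ by putting all terms over the common denominator $\prod_{r=1}^{Q+n}(1-\alpha_r z)$ and comparing $z$-coefficients. Either route works; just pick one description.
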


\begin{proof}
In the case $k = \ell$, the conditions imply that $i = j - \ell = j - k$ and every term equals~$1$.
We leave the details of this case to the reader and assume that $\ell > k$.
In that case, the right-hand side is a homogeneous polynomial of degree $l - k$, and we will show that when it is expanded in monomials, all terms cancel.

Let
\[
T = \{ (i, j) |1 \leqslant j \leqslant \ell, j - \ell \leqslant i \leqslant \min (0, j - k) \}.
\]
Given $(i, j) \in T$, define $\Omega(i, j)$ to be the set of all pairs $(C, D)$, where $C = \{c_1, \ldots, c_{i - j + \ell} \}$ is a multiset and $D = \{d_1, \ldots, d_{j - i - k} \}$ is a set of integers satisfying
\[
i \leqslant c_1 \leqslant \cdots \leqslant c_{i - j + \ell} \leqslant j,
\qquad
i < d_1 < \cdots < d_{j - i - k} < j.
\]
Note that $\abs{C} + \abs{D} = \ell - k > 0$, so either $C$ or $D$ is nonempty.
By expanding the $h_{i - j + \ell}$ and $e_{j - i - k}$ in~\eqref{eq:cocyclecase}, the left-hand side equals
\begin{equation}
\label{eq:cocycleexpanded} \sum_{(i, j) \in T} (- 1)^{j - i - k} \sum_{(C, D) \in \Omega (i, j)} \alpha^C \alpha^D .
\end{equation}

Let us decompose
\[
\Omega(i, j) = \Omega_C(i, j) \sqcup \Omega_D(i, j) \qquad \left( \text{disjoint} \right),
\]
where $\Omega_C(i, j)$ consists of pairs $(C, D)$ such that either $D = \varnothing$ or $c_1 < d_1$, and $\Omega_D(i, j)$ consists of pairs such that either $C = \varnothing$ or $d_1 \geqslant c_1$.
We will call an element $(C, D)$ of $\Omega_C(i, j)$ \defn{exceptional} if $D = \varnothing$ and $c_1 = \cdots = c_{i - j + \ell} = j$.

We will also need an alternative decomposition
\[
\Omega(i, j) = \Omega^C(i, j) \sqcup \Omega^D(i, j) \qquad \left( \text{disjoint} \right),
\]
where $\Omega^C(i, j)$ consists of pairs where either $D =\varnothing$ or $c_{i - j + l} > d_{j - i + k}$, and $\Omega^D(i, j)$ consists of pairs such that $C = \varnothing$ or $d_{j - i - k} \geqslant c_{i - j + \ell}$.
We will call an element $(C, D)$ of $\Omega^C (i, j)$ \defn{exceptional} if $D = \varnothing$ and $c_1 = \cdots = c_{i - j + \ell} = i$.

We will define operations $\mathcal{L}_C$ on $\Omega_C (i, j)$ and $\mathcal{L}_D$ on $\Omega_D (i, j)$, namely $\mathcal{L}_C$ removes $c_1$ from $C$ and places it at the beginning of $D$, so $\mathcal{L}_C(C, D) = (C', D')$, where
\[
C' = \{ c_2, \dotsc, c_{i - j + l} \}, \qquad D' = \{ c_1, d_1, \dotsc, d_{j - i - k} \}.
\]
Similarly $\mathcal{L}_D$ removes $d_1$ from $D$ and places it at the beginning of $C$.
Let $\mathcal{L}^C$ be the operation on $\Omega^C(i, j)$ that removes $c_{i - j + k}$ from the end of $C$ and places it at the end of $D$, and $\mathcal{L}^D$ removes $d_{j - i - k}$ from the end of $D$ and places it at the end of $C$.

\begin{lemma}
\label{lem:firstinv}Assume that $(i, j) \in T$.
\renewcommand{\theenumi}{\roman{enumi}}
\begin{enumerate}
\item If $(C, D) \in \Omega_C (i, j)$ is not exceptional, then $(i - 1, j) \in T$ and $\mathcal{L}_C (C, D) \in \Omega (i - 1, j)$.

\item If $(C, D) \in \Omega_D (i, j)$ and $i < 0$, then $(i + 1, j) \in T$ and $\mathcal{L}_D (C, D) \in \Omega (i + 1, j)$.

\item If $(C, D) \in \Omega^C (i, j)$ is not exceptional then $(i, j + 1) \in T$ and $\mathcal{L}^C (C, D) \in \Omega (i, j + 1)$.

\item If $(C, D) \in \Omega^D (i, j)$ and $j > 1$, then $(i, j - 1) \in T$ and $\mathcal{L}^D (C, D) \in \Omega (i, j - 1)$.
\end{enumerate}
\end{lemma}

\begin{subproof}
We will consider the first two cases and leave the last two to the reader.
To prove (i), since $C$ is nonempty, we must have $i - j + l > 0$.
This, together with the fact that $(i, j) \in T$ implies that $(i - 1, j) \in T$.
Because $c_1 < d_1$ (or $D$ is empty) we may move $c_1$ to $q$, but have only to check the inequalities $i - 1 < c_1 < j$.
Since $i \leqslant c_1 \leqslant j$, the only way this can fail is that $c_1 = j$.
This implies that $c_1 = \cdots = c_{i - j + l} = j$, and we are in the exceptional case.

To prove (ii), we note that since $D$ is nonempty, $j - i - k > 0$.
Also we are assuming that $i < 0$.
These facts, together with the fact that $(i, j) \in T$ imply that $(i + 1, j) \in T$.
It is easy to see that moving $d_1$ to $C$ gives an element of $\Omega (i + 1, j)$.
\end{subproof}

\begin{lemma}
\label{lem:secondinv}
\mbox{}
\renewcommand{\theenumi}{\roman{enumi}}
\begin{enumerate}
\item Let $(C, D) \in \Omega_C (i, j)$.
  Suppose that the smallest element of $C \cup D$ is $\leqslant 0$.
  Then $(C, D)$ is not exceptional.

\item Let $(C, D) \in \Omega_D (i, j)$.
  Suppose that the smallest element of $C \cup D$ is $\leqslant 0$. Then $i < 0$.

\item Let $(C, D) \in \Omega^C (i, j)$.
  Suppose that the smallest element of $C \cup D$ is $> 0$.
  Then $(C, D)$ is not exceptional.

\item Let $(C, D) \in \Omega^D (i, j)$.
  Suppose that the smallest element of $C \cup D$ is $> 0$.
  Then $j > 1$.
\end{enumerate}
\end{lemma}

\begin{subproof}
For (i), our assumption implies that $c_1 \leqslant 0$, while $1 \leqslant j$ so $c_1 \neq j$, implying that we are not in the exceptional case.
For (ii), we have $i < d_1 \leqslant c_1$ which we are assuming is $\leqslant 0$, so $i < 0$. For (iii), all elements of $C$ are positive, so $c_1 = \cdots = c_{i - j + l} = i$ is ruled out because $i \leqslant 0$.
Finally for (iv), $j = 1$ is impossible since $0 < d_{j - i - k} < j$.
\end{subproof}

We may now define an involution $\eta$ of the disjoint union $\Omega = \bigsqcup_{(i, j) \in T} \Omega (i, j)$ as follows.
Suppose that $(C, D) \in \Omega (i, j)$.
Define
\[
   \eta (C, D) = \begin{cases}
   \mathcal{L}_C (C, D) & \text{if $\min (C, D) \leqslant 0$ and $(C, D) \in \Omega_C (i, j)$,}\\
   \mathcal{L}_D (C, D) & \text{if $\min (C, D) \leqslant 0$ and $(C, D) \in \Omega_D (i, j)$,}\\
   \mathcal{L}^C (C, D) & \text{if $\min (C, D) > 0$ and $(C, D) \in \Omega^C (i, j)$,}\\
   \mathcal{L}_D (C, D) & \text{if $\min (C, D) > 0$ and $(C, D) \in \Omega^D (i, j)$.}
 \end{cases}
\]
By Lemma~\ref{lem:firstinv}, we have $\eta (C, D) \in \Omega$ since Lemma~\ref{lem:secondinv} says the condition on $\min (C, D)$ avoids the problematic cases.
Note that $\eta$ does not change $\min (C, D)$, from which it is easy to see that $\eta$ has order 2.
Terms that correspond under $\eta$ cancel in~\eqref{eq:cocycleexpanded}, and so the sum is zero.
\end{proof}

We remark that our proof of Proposition~\ref{prop:cocyclecase} does not use~\cite[Thm~4.4, Prop.~4.6]{BHS}, but instead is a direct proof.

\begin{proposition}
\label{prop:heidentity}
If $i \leqslant j$, then
\begin{equation}
\label{eq:heidentity}
\sum_{t = i}^j h_{t - i}(\bal_{[t-k,i]}) \, e_{j - t}(-\bal_{(t-k,j)}) = \delta_{ij}.
\end{equation}
\end{proposition}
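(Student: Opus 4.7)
My plan is to adapt the sign-reversing involution argument from Proposition~\ref{prop:cocyclecase}, though the involution will be simpler because the summation range here is a single interval. The case $i = j$ is immediate: only $t = i$ contributes, giving $h_0 e_0 = 1 = \delta_{ii}$. Henceforth assume $N := j - i > 0$ and aim to show the sum vanishes. Expanding each symmetric function into monomials, the left-hand side becomes
\[
\sum_{t=i}^{j} \sum_{(M, S) \in \Omega(t)} (-1)^{|S|} \alpha^M \alpha^S,
\]
where $\Omega(t)$ consists of pairs with $M$ a multiset of size $t - i$ in $\{t-k, \dotsc, i\}$ and $S$ a subset of size $j - t$ in $\{t-k+1, \dotsc, j-1\}$; the sign $(-1)^{|S|} = (-1)^{j-t}$ arises from the negated arguments of $e_{j-t}$.

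Next, I would define an involution $\eta$ on $\Omega := \bigsqcup_{t=i}^{j} \Omega(t)$ by moving the smallest element of the multiset $M \cup S$ between $M$ and $S$: letting $m := \min(M \cup S)$ (which exists since $|M| + |S| = N > 0$), set $\eta(t, M, S) := (t+1,\, M + \{m\},\, S \setminus \{m\})$ if $m \in S$, and $\eta(t, M, S) := (t-1,\, M \setminus \{m\},\, S \cup \{m\})$ if $m \in M \setminus S$, where $M \pm \{m\}$ adjusts the multiplicity in the multiset by one. Because $|S|$ shifts by $\pm 1$, $\eta$ is sign-reversing, and $t$ always changes so there are no fixed points, forcing the signed sum to be zero.

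The main technical work is verifying that $\eta$ lands in $\Omega(t \pm 1)$ and satisfies $\eta^2 = \mathrm{id}$. Most range conditions follow from the minimality of $m$; for instance, $m \in S$ forces $m \geq t - k + 1$, so every element of $M$ already lies in the shrunken range $\{(t+1) - k, \dotsc, i\}$ required by $\Omega(t+1)$. I expect the main obstacle to be checking that $m \leq i$ when moving $m$ from $S$ into $M$: if instead $m > i$, minimality forces every element of $M$ to exceed $i$, hence $M = \emptyset$ and $t = i$; but then $S$ would be a subset of size $N$ contained in $\{i+1, \dotsc, j-1\}$, which has only $N - 1$ elements, a contradiction. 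The involutivity check reduces to tracking the case $m \in M \cap S$, where applying $\eta$ increases $m$'s multiplicity in $M$ by one and removes $m$ from $S$, so $m$ remains the minimum and now lies only in the new $M$, whence the reverse move restores the original configuration.
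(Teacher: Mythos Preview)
Your proposal is correct and essentially identical to the paper's proof: both expand into monomials indexed by pairs (multiset, set) and define the same sign-reversing involution that moves the overall minimum between the two, with the same tie-breaking rule (prefer moving from the set to the multiset when the minimum lies in both). Your contradiction argument for the key check $m \leqslant i$ is a minor rephrasing of the paper's pigeonhole observation that $d_1 \leqslant j - (j-t) = t$ when $C = \varnothing$, and all remaining range and involutivity checks match.
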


\begin{proof}
When $i = j$, the result is immediate, so we now assume $i < j$.
Now we note $e_{j-t}(-\bal_{(t-k,j)}) \neq 0$ only if $k \geqslant 1$ by counting the number of variables, and so the claim trivially holds for all $k < 1$.
Next, we write the sum as $\sum_t (- 1)^{j - t} F_t$, where
\[
F_t := h_{t - i} (\alpha_{t - k}, \dotsc, \alpha_i)  \hspace{0.17em}
 e_{j - t} (\alpha_{t - k + 1}, \dotsc, \alpha_{j - 1}) = \sum_{(C, D) \in \Omega_t} \bal^C \bal^D,
\]
where $\Omega_t$ is the set of pairs $(C, D)$ with $C = \{c_1, \ldots, c_{t- i} \}$ being a multiset and $D = \{d_1, \ldots, d_{j - t} \}$ being a set of integers satisfying
\begin{equation}
	\label{eq:pqineq}
t - k \leqslant c_1 \leqslant \cdots \leqslant c_{t - i} \leqslant i,
 \qquad t - k < d_1 < \cdots < d_{j - t} < j.
\end{equation}
Additionally, we are using the notation $\bal^C$ to denote $\prod_m\alpha_{c_m}$.
Let $\Omega = \bigcup_t \Omega_t$.
We will define an involution $\eta$ of $\Omega$ that maps every element of $\Omega_t$ into either $\Omega_{t - 1}$ or $\Omega_{t + 1}$, and such that if $(C', D') = \eta (C, D)$, then $\bal^{C'} \bal^{D'} = \bal^C \bal^D$.
This is sufficient, since the contributions of terms corresponding by the involution will cancel in pairs, proving~\eqref{eq:heidentity}.

The involution $\eta$ will either move an element of $C$ to $D$, giving an element of $\Omega_{t-1}$ or it will move an element of $D$ to $C$, giving an element of $\Omega_{t+1}$.
The recipe is as follows:
\begin{itemize}
\item If $C$ and $D$ have distinct smallest elements, move this smallest element in $C \cup D$ from $C$ to $D$ or from $D$ to $C$.

\item If $C$ and $D$ have the same smallest element, move it from $D$ to $C$.
\end{itemize}
Note that $D$ is a set (without multiplicities), while $C$ is allowed to have multiplicities.
Moreover $C$ is allowed to have an element as small as $t - k$, while the smallest element of $D$ must be strictly larger than $t - k$.
With this in mind, it is easy to see that $\eta$ described by this recipe takes every element of $\Omega_t$ into either $\Omega_{t-1}$ or $\Omega_{t+1}$, and that $\eta$ has order~$2$.
It must be argued that if we move the smallest element $d_1$ of $D$ into $C$, then $d_1\leqslant i$. If $C$ is nonempty, then $d_1\leqslant c_1\leqslant i$, as required.
On the other hand if $C$ is empty, then $t=i$, and (\ref{eq:pqineq}) implies that $d_1 \leqslant t$, so $d_1\leqslant i$ in this case also.
\end{proof}

We also have another distinct identity that is a simple consequence of the change of bases from Proposition~\ref{prop:shifted_cob}.

\begin{corollary}
\label{cor:product_cob_identity}
For fixed $m \geqslant j \geqslant 0$, we have
\begin{align*}
\sum_{k=0}^j h_k(\bal_{[1,m-k+1]}) e_{j-k}(-\bal_{(0,m-k+1)}) & = \delta_{j0},
&
\sum_{k=0}^j e_k(-\bal_{(0,m+1)}) h_{j-k}(\bal_{[1,m-j+1]}) & = \delta_{j0}.
\end{align*}
\end{corollary}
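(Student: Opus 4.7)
The plan is to recognize the two identities as expressing that the change-of-basis matrices in Proposition~\ref{prop:shifted_cob} between the bases $\{z^{-l}\}_{l \geqslant 0}$ and $\{(z^{-1}|\bal)^k\}_{k \geqslant 0}$ of the non-positive degree part of $\ZZ[\bal]\FLS{z}$ are mutually inverse. Since the corollary immediately follows the statement of Proposition~\ref{prop:shifted_cob} and the introductory sentence explicitly points to that change of basis, this is evidently the intended route, and no new combinatorial input is required beyond Proposition~\ref{prop:shifted_cob}.

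Concretely, for the first identity, I would begin with the expansion
\[
z^{-m} = \sum_{k=0}^{m} h_{m-k}(\bal_{[1,k+1]}) (z^{-1}|\bal)^k
\]
and substitute into each shifted power the formula $(z^{-1}|\bal)^k = \sum_{l=0}^k e_{k-l}(-\bal_{(0,k+1)}) z^{-l}$. Collecting the coefficient of $z^{-l}$ on the right-hand side and using linear independence of the $z^{-l}$, one obtains
\[
\sum_{k=l}^{m} h_{m-k}(\bal_{[1,k+1]}) \, e_{k-l}(-\bal_{(0,k+1)}) = \delta_{lm}.
\]
Setting $j = m - l$ and then successively reindexing the summation variable (first $k \mapsto k + m - j$ to move the range to $[0,j]$, and then $k \mapsto j - k$ to match the ordering of $h_k$ and $e_{j-k}$ in the statement) yields the first identity, with the Kronecker delta $\delta_{lm}$ becoming $\delta_{j0}$.

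The second identity is proved by the mirror argument: start from $(z^{-1}|\bal)^m = \sum_{k=0}^m e_{m-k}(-\bal_{(0,m+1)}) z^{-k}$, substitute $z^{-k} = \sum_{l=0}^{k} h_{k-l}(\bal_{[1,l+1]}) (z^{-1}|\bal)^l$, and equate the coefficient of $(z^{-1}|\bal)^l$ to $\delta_{lm}$, then perform the same reindexing with $j = m - l$.

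The only real work is bookkeeping the shifted intervals $\bal_{[1,\,\cdot]}$ and $\bal_{(0,\,\cdot)}$ under the successive reindexings, so that the upper endpoints end up as $m - k + 1$ in the $h$-factor of the first identity and as $m - j + 1$ in the $h$-factor of the second. I expect no conceptual obstacle; the main trap is to keep the open versus closed interval conventions straight so that the number of $\alpha$-variables in each factor matches what is prescribed by Proposition~\ref{prop:shifted_cob}.
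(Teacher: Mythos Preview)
Your proposal is correct and follows exactly the paper's approach: expand $z^{-m}$ (resp.\ $(z^{-1}|\bal)^m$) via Proposition~\ref{prop:shifted_cob}, substitute back, and reindex. The paper carries out the first identity in full detail with the same double substitution and the same final reindexing (in one step, $k\mapsto m-k$), so there is nothing to add.
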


\begin{proof}
The first follows from expanding $z^m$ in terms of the shifted powers and then back again into the usual powers.
The second is the shifted powers to normal powers and back to shifted.
In more detail, the first identity is
\begin{align*}
z^{-m} & = \sum_{k=0}^m h_{m-k}(\bal_{[1,k+1]}) (z^{-1}|\bal)^k
 = \sum_{k=0}^m h_{m-k}(\bal_{[1,k+1]}) \sum_{j=0}^k e_{k-j}(-\bal_{(0,k+1)}) z^{-j}
\\ & = \sum_{j=0}^m \sum_{k=j}^m h_{m-k}(\bal_{[1,k+1]}) e_{k-j}(-\bal_{(0,k+1)}) z^{-j}
\\ & = \sum_{j=0}^m \sum_{k=0}^{m-j} h_k(\bal_{[1,m-k+1]}) e_{m-j-k}(-\bal_{(0,m-k+1)}) z^{-j}  
\\ & = \sum_{j=0}^m \sum_{k=0}^j h_k(\bal_{[1,m-k+1]}) e_{j-k}(-\bal_{(0,m-k+1)}) z^{m-j}.  
\end{align*}
The second identity is similar.
\end{proof}

\section{Lie algebra representations}
\label{sec:lie_algebra}

The \defn{fermionic Fock space} $\fermionfock$ is the semi-infinite wedge product of vectors in the free module $V = \bigoplus_{i \in \ZZ} \CC[\bal] v_i$ that satisfy
\[
v_{i_1} \wedge v_{i_2} \wedge \cdots,
\qquad \qquad
\text{ where } i_k = m + k \text{ for all } k \gg 1 \text{ and some } m \in \ZZ.
\]
The $m \in \ZZ$ is called the \defn{charge} and defines a grading $\fermionfock = \bigoplus_{m \in \ZZ} \fermionfock^m$.
A basis of $\fermionfock^m$ is indexed by all partitions, denoted $\mcP$, with the basis vector indexed by $\lambda \in \mcP$ defined by
\[
\ket{\lambda}_m = v_{\lambda_1} \wedge v_{\lambda_2-1} \wedge \cdots.
\]
When $m = 0$, we denote $\ket{\lambda} := \ket{\lambda}_0$ for brevity.
Let ${}_{m} \bra{\lambda}$ denote the dual vector to $\ket{\lambda}_m$, and the natural pairing ${}_{\ell} \braket{\mu}{\lambda}_m = \delta_{\ell m} \delta_{\mu\lambda}$ is such that ${}_{\ell} \bra{\mu} X \ket{\lambda}_m$ is unambiguous for any operator $X$ on $\fermionfock$.

Let $\gl_{\infty}$ be the Lie algebra on the free $\CC[\bal]$-module $\bigoplus_{i,j \in \ZZ} \CC[\bal] E_{ij}$ with commutation law
\[
[E_{ij}, E_{k\ell}] = \delta_{jk} E_{i\ell} - \delta_{i\ell} E_{kj}.
\]
We may think of these as endomorphisms of $V$, where $E_{ij}$ is the endomorphism that maps $v_j$ to $v_i$ and annihilates all other basis vectors.
Although $\gl_{\infty}$ is a Lie algebra, we may also regard it as an associative ring (without unit) with multiplication $E_{ij} E_{k\ell} = \delta_{jk} E_{i\ell}$ by realizing it as a (nonunital) subalgebra of $\End(V)$.
Furthermore, $\gl_{\infty}$ has a natural action on $\fermionfock$ by
\[
M \ket{U} = M\bigl (u_1 \wedge u_2 \wedge \cdots \bigr) = \bigl( (M u_1) \wedge u_2 \wedge \cdots \bigr) + \bigl(u_1 \wedge (M u_2) \wedge \cdots \bigr) + \cdots
\]
for any $\ket{U} \in \fermionfock$ coming from $M \in \gl_\infty$, which act on basis vectors of $V$ as usual with $E_{ij}v_k = \delta_{jk} v_i$.
In other words, we have a representation $r \colon \gl_{\infty} \to \End(\fermionfock)$.
However, $\gl_{\infty}$ is clearly not all of $\End(V)$ since $\gl_{\infty}$ does not contain the identity map.

\subsection{Near diagonal infinite matrices}

Before we get to the space we consider in this paper, we describe an enlarged Lie algebra that has appeared in, \textit{e.g.},~\cite[Ch.~14]{KacInfinite} to described the classical boson-fermion correspondence (see also \cite[Ch,~4--6]{KacRaina}).
Let $\overline{\mfa}_{\infty}$ be the space of formal sums $A = (a_{i j}) := \sum_{ij} a_{i j} E_{i j}$, where $a_{i j} = 0$ unless $\abs{i - j} < N$ for some $N$ depending on~$A$.
This space is naturally a ring (with unit) by extending the product for $\gl_{\infty}$, which allows us to naturally identity $\overline{\mfa}_{\infty} = \End(V)$.
As such, we can consider $\overline{\mfa}_{\infty}$ as the space of matrices with finitely many nonzero diagonals.

\subsection{Current operators and near triangular infinite matrices}

Next, we define the \defn{deformed current operators}
\[
J_k^{(\bal)} := \sum_{i,j}A_{ij}^k E_{ij},
\quad
\text{ where }
A_{ij}^k = \begin{cases}
	e_{j-i-k}(-\bal_{(i,j)}) & \text{if } j \geqslant i + k \text{ and } k > 0, \\
	h_{j-i-k}(\bal_{[j,i]}) & \text{if } j \leqslant i \leqslant j - k \text{ and } k \leqslant 0, \\
	0 & \text{otherwise.}
	\end{cases}
\]
In particular, we have $A^0_{ij} = \delta_{ij}$ and for $k > 0$, we have $A_{ij}^k = 0$ (resp.\ $A_{ij}^{-k} = 0$) whenever $j < i$ (resp.\ $j > i$).
Additionally, for all $i,j,k \in \ZZ$ we can write
\begin{equation}
\label{eq:a_int}
A_{ij}^k 
= \oint z^{k-1} (z^{-1}|\sigma^i\bal)^{j-i-1} \frac{dz}{2\pi\ii z},
\end{equation}
from~\cite[Lemma~4.5]{BHS} (note that we need to take $z \mapsto z^{-1}$ to get the contours to match; \textit{cf.}\ Remark~\ref{rem:formal_contours}).

However, $J_k^{(\bal)} \notin \overline{\mfa}_{\infty}$, and so we need an even larger ring consisting of elements close to upper triangular matrices.
Formally, let $\overline{\mfa}_{\infty}^+$ be the larger ring of $A = \sum a_{ij} E_{ij}$, where $a_{ij} = 0$ unless $i - j < N$ for some $N$ (again depending on $A$).
Elements of $\overline{\mfa}_{\infty}^+$ are no longer necessarily endomorphisms of $V$ if we specialize $\bal$ (say, to complex numbers) but could be considered as linear maps $V \to \widehat{V} := \prod_{i \in \ZZ} \CC[\bal] v_i$.
Nevertheless, for any $A, B \in \overline{\mfa}_{\infty}^+$ the sum $\sum_j a_{ij} b_{jk}$ is finite.
Thus $\overline{\mfa}_{\infty}^+$ is an associative $\CC[\bal]$-algebra (with unit) and therefore a Lie algebra (over $\CC[\bal]$) with Lie bracket $[A, B] = A B - B A$.

Our first goal is to show that the deformed current operators commute as elements in $\overline{\mfa}_{\infty}^+$.
We will see later that we will recover the Heisenberg relations by taking a central extension (which essentially is accounting for the effects of the normal ordering; see Remark~\ref{rem:normal_ordering} below).

\begin{proposition}[{\cite[Prop.~6.6]{BHS}}]
\label{prop:commuting_currents}
As elements in the associative algebra $\overline{\mfa}_{\infty}^+$, for all $k, \ell \in \ZZ$,
\[
J_k^{(\bal)} J_{\ell}^{(\bal)} = J_{k+\ell}^{(\bal)}.
\]
Moreover, $[J_k^{(\bal)}, J_{\ell}^{(\bal)}] = 0$ and the inverse of $J_k^{(\bal)}$ is $J_{-k}^{(\bal)}$.
\end{proposition}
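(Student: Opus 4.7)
The plan is to establish the matrix identity
\[
\sum_j A_{ij}^k A_{jm}^\ell = A_{im}^{k+\ell}
\]
for all $i, m, k, \ell \in \ZZ$, which is equivalent to $J_k^{(\bal)} J_\ell^{(\bal)} = J_{k+\ell}^{(\bal)}$ in $\overline{\mfa}_\infty^+$. The sum over $j$ is always finite since the intersection of the supports of $A_{ij}^k$ and $A_{jm}^\ell$ in $j$ is bounded (the support conditions force $j$ into a finite range for fixed $i, m, k, \ell$). Once this identity is verified, commutativity $[J_k^{(\bal)}, J_\ell^{(\bal)}] = 0$ follows immediately from the symmetry $J_k J_\ell = J_{k+\ell} = J_{\ell+k} = J_\ell J_k$, and the inverse relation from the special cases $J_k J_{-k} = J_{-k} J_k = J_0 = I$, using $A_{ij}^0 = \delta_{ij}$.

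I would prove the matrix identity by case analysis on the signs of $k$ and $\ell$. The cases $k = 0$ or $\ell = 0$ are immediate. For $k, \ell > 0$, it reduces to the splitting identity
\[
\sum_{j=i+k}^{m-\ell} e_{j-i-k}(-\bal_{(i,j)}) \, e_{m-j-\ell}(-\bal_{(j,m)}) = e_{m-i-k-\ell}(-\bal_{(i,m)}),
\]
which I would prove by a monomial bijection: each squarefree monomial $\prod_r (-\alpha_{q_r})$ on the right, with $i+1 \leqslant q_1 < \cdots < q_N \leqslant m-1$ and $N = m-i-k-\ell$, appears exactly once on the left, from the unique $j$ equal to the $k$-th smallest element of $\{i+1,\ldots,m-1\} \setminus \{q_r\}$. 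The case $k, \ell < 0$ is dual, using complete homogeneous symmetric functions in place of elementary ones. For the mixed-sign cases, the key identities are $J_{-q} J_q = I$ for $q > 0$ (which, after reindexing the summation variable, matches Proposition~\ref{prop:heidentity} exactly; terms outside our range vanish since $h_n$ of an empty variable set is zero for $n > 0$) and its reverse $J_q J_{-q} = I$ (established by an analogous identity obtainable from Proposition~\ref{prop:heidentity} via the involution $\iota \colon \alpha_i \mapsto \alpha_{1-i}$). General mixed-sign products with $k + \ell \neq 0$ then follow by associativity in $\overline{\mfa}_\infty^+$: for instance, if $k > 0 > \ell$ and $k + \ell > 0$, one writes $J_k = J_{k+\ell} J_{-\ell}$ via the same-sign case, whence $J_k J_\ell = J_{k+\ell}(J_{-\ell} J_\ell) = J_{k+\ell}$.

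A more unified alternative uses the integral representation~\eqref{eq:a_int} together with the factorization $(z^{-1}|\sigma^i\bal)^{j-i-1} = g_j(z)/g_{i+1}(z)$, where $g_j(z) := (z^{-1}|\sigma^{-1}\bal)^j$; this follows from the recursion $(z^{-1}|\bal)^{n+1} = (z^{-1} - \alpha_{n+1})(z^{-1}|\bal)^n$ for shifted powers. Swapping the finite sum with the integrals, the product becomes a double integral with kernel $T(z, w) := \sum_j g_j(z)/g_{j+1}(w)$; setting $R_j := g_j(z)/g_j(w)$ and using $g_{j+1}(w) = (w^{-1} - \alpha_j) g_j(w)$ yields $R_{j+1} - R_j = R_j (z^{-1} - w^{-1})/(w^{-1} - \alpha_j)$, so $T$ telescopes and behaves formally as a delta function along the diagonal $z = w$, collapsing the double integral to the single integral defining $A_{im}^{k+\ell}$. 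The main obstacle is either carrying out the mixed-sign reduction cleanly (in particular, the reindexing and empty-set cancellation needed to apply Proposition~\ref{prop:heidentity}), or, in the integral approach, making the formal delta-function argument rigorous against the finiteness of the underlying sum.
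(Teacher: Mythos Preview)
Your primary approach matches the paper's: both reduce to the matrix identity~\eqref{eq:aijkmatrix}, handle $k,\ell>0$ by the same unique-splitting bijection (the paper's metric $d_U$ encodes exactly your ``$k$-th smallest element of the complement''), invoke Proposition~\ref{prop:heidentity} for $J_{-k}J_k=I$, and bootstrap the remaining cases by associativity and invertibility (the paper observes $J_k$ is triangular, so a one-sided inverse suffices, rendering your $\iota$ step unnecessary). Your alternative integral/telescoping-kernel route is precisely the formal-distribution argument the paper explicitly sets out to avoid in this section.
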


Like~\cite[Rem.~6.7]{BHS}, while one could say $J_k^{(\bal)} \cdot J_{\ell}^{(\bal)} = J_{k+\ell}^{(\bal)}$ \textit{as matrices}, this is misleading given that we want to consider representations of $\mfa_{\infty}^+$ as a Lie algebra.
As for the proof of~\cite[Prop.~6.6]{BHS}, the proof of Proposition~\ref{prop:commuting_currents} is equivalent to showing, for all $k,\ell, p,q \in \ZZ$,
\begin{equation}
\label{eq:aijkmatrix}
\sum_{a \in \ZZ} A_{p, a}^k A_{a,q}^{\ell} = A_{p,q}^{k+\ell},
\end{equation}
which was~\cite[Prop.~6.8]{BHS}.
Unlike~\cite[Prop.~6.8]{BHS}, we will not use any analytic assumptions nor will we use formal distribution calculus (when $\bbe = 0$) to show~\eqref{eq:aijkmatrix}.
Hence, we have an entirely new proof of~\eqref{eq:aijkmatrix} (and thus Proposition~\ref{prop:commuting_currents}).

If $U$ is a subset of $\ZZ$, let us define a metric on $\ZZ - U$ by
\[
d_U (a, b) = \begin{cases}
     \abs{a - b} - \abs{(a, b) \cap U} & \text{if $b \geqslant a$},\\
     \abs{a - b} - \abs{(b, a) \cap U} & \text{if $b \leqslant a$,}
   \end{cases}
\]
where if $b \geqslant a$, then $(a, b)$ denotes the set of integers $j$ in the open interval $a < j < b$ (hence $(a, a) = \varnothing$).
We will also use $[a, b]$ to denote the set of integers in the closed interval $a\leqslant j\leqslant b$.

\begin{lemma}
\label{lemma:aijkmatrix_pos}
Equation~\eqref{eq:aijkmatrix} holds for all $k, \ell > 0$.
\end{lemma}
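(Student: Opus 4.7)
The plan is to verify~\eqref{eq:aijkmatrix} in the case $k,\ell>0$ by direct combinatorial expansion, since in this regime $A_{ij}^{k}$ reduces to a single elementary symmetric function. First, I note that $A_{p,a}^{k}$ vanishes unless $a\geqslant p+k$ and $A_{a,q}^{\ell}$ vanishes unless $a\leqslant q-\ell$, so the sum over $a$ is finite and supported on $a\in[p+k,q-\ell]$. If $n:=q-p-k-\ell<0$ this range is empty and $A_{p,q}^{k+\ell}=0$ as well, so the identity holds trivially; assume $n\geqslant 0$. Writing $\beta_i=-\alpha_i$ for brevity, the identity to prove becomes
\begin{equation*}
\sum_{a=p+k}^{q-\ell} e_{a-p-k}(\beta_{p+1},\dotsc,\beta_{a-1})\,e_{q-a-\ell}(\beta_{a+1},\dotsc,\beta_{q-1})
= e_{n}(\beta_{p+1},\dotsc,\beta_{q-1}),
\end{equation*}
in which, crucially, the variable $\beta_a$ is omitted from both factors on the left.

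Second, I set up a bijection between monomials on the two sides. The RHS is a sum over subsets $S\subseteq(p,q):=\{p+1,\dotsc,q-1\}$ of size $n$, each contributing $\prod_{i\in S}\beta_i$. On the LHS, a monomial is indexed by a triple $(a,S_1,S_2)$ with $S_1\subseteq(p,a)$ of size $a-p-k$ and $S_2\subseteq(a,q)$ of size $q-a-\ell$; setting $S:=S_1\sqcup S_2\subseteq(p,q)\setminus\{a\}$ gives an $n$-subset of $(p,q)$ disjoint from $\{a\}$. Let $T:=(p,q)\setminus S$, so $|T|=k+\ell-1$; the size constraint $|S_1|=a-p-k$ forces $|T\cap(p,a)|=(a-p-1)-(a-p-k)=k-1$, which pins $a$ down as the $k$-th smallest element of $T$.

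Conversely, given any $n$-subset $S\subseteq(p,q)$, take $a$ to be the $k$-th smallest element of $T=(p,q)\setminus S$ (which exists since $|T|=k+\ell-1\geqslant k$), and set $S_1:=S\cap(p,a)$, $S_2:=S\cap(a,q)$; the prescribed sizes follow automatically from $|T\cap(p,a)|=k-1$ and $|T\cap(a,q)|=\ell-1$. The range $a\in[p+k,q-\ell]$ is immediate from this counting: $a$ has $k-1$ elements of $T$ strictly below it in $(p,a)$ so $a\geqslant p+k$, while the remaining $\ell-1$ elements of $T$ lie above $a$ in $(a,q)$, forcing $q-a-1\geqslant\ell-1$. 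Monomials on the two sides therefore match in bijection, proving the lemma. I do not anticipate a real obstacle beyond this bookkeeping; an alternative approach using the integral formula~\eqref{eq:a_int} (summing a product of two residues over $a$ and collapsing via a partial-fractions identity) also seems viable but would ultimately amount to the same combinatorics.
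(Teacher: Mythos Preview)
Your proof is correct and follows essentially the same bijective argument as the paper: both match each $n$-subset $S\subseteq(p,q)$ on the right with the unique splitting point $a$ determined by the complement $T=(p,q)\setminus S$. The paper phrases this via a metric $d_U$ (so that $a$ is the unique point with $d_S(p,a)=k$), which is exactly your condition that $a$ be the $k$-th smallest element of~$T$.
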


\begin{proof}
Assume $j \geqslant i + k$. Then
\begin{equation}
\label{eq:aijkchar}
A_{ij}^k = (-1)^{i-j-k}
\sum_{i < s_1 < \cdots < s_{j - i - k} < j} 
\alpha_{s_1} \cdots \alpha_{s_{j - i - k}} .
\end{equation}
For each term in the sum~\eqref{eq:aijkchar}, if $S = \{s_1, \dotsc, s_{j - i - k} \}$, there are exactly $k - 1$ elements of $(i, j) - S$.
So $k = d_S(i, j)$.

Using this characterization, we prove~\eqref{eq:aijkmatrix} for $k, \ell > 0$.
Both sides vanish unless $q - p \geqslant k + \ell$, and we assume this.
By~\eqref{eq:aijkchar}, we need to show
\begin{equation}
\label{eq:pqterm}
\sum_{p < u_1 < \cdots < u_{q - p - k - \ell} < q} \alpha_{u_1} \cdots \alpha_{u_{q - p - k - \ell}}
\end{equation}
equals
\[
\sum_j \left( \sum_{p < s_1 < \cdots < s_{j - p - k} < j} \alpha_{s_1} \cdots \alpha_{s_{j - p - k}} \right)
\left( \sum_{j < t_1 < \cdots < t_{q - j - \ell} < q} \alpha_{t_1} \cdots \alpha_{t_{q - j - \ell}} \right).
\]
Given $p < u_1 < \cdots < u_{q - p - k - \ell} < q$, we will show that there is a unique $j \in (p, q)$ such that the sequence $u_1, \dotsc, u_{q - p - k - \ell}$ splits up into two sequences, $s_1, \dotsc, s_{j - p - k}$ followed by $t_1, \dotsc, t_{q - j - \ell}$, where
\begin{equation}
\label{eq:divcon}
p < s_1 < \cdots < s_{j - p - k} < j,
\qquad\qquad
j < t_1 < \cdots < t_{q - j - \ell} < q.
\end{equation}
Thus we want $s_i = u_i$ and $t_i = u_{i + j - p - k}$, and the issue is to show that there is a unique choice of $j$ such that this is possible.
Let $U = \{u_{1,} \cdots, u_{q - p - k - \ell} \}$.
Then $d_U (p, q) = k + \ell$, so there is a unique $j$ in $(p, q) - U$ such that $d_U (p, j) = k$ and $d_U(j, q) = \ell$.
Clearly this is the unique $j$ that realizes the term~\eqref{eq:pqterm} as a product of two terms in $A_{p, j}^k A_{j, q}^{\ell}$.
This proves our claim.
\end{proof}

\begin{lemma}
\label{lemma:jkinverse}
Suppose that $k > 0$.
Then the matrices $J_k^{(\bal)}$ and $J_{- k}^{(\bal)}$ are inverses.
\end{lemma}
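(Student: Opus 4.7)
The plan is to unpack both matrix products $J_k^{(\bal)} J_{-k}^{(\bal)}$ and $J_{-k}^{(\bal)} J_k^{(\bal)}$ entrywise via~\eqref{eq:aijkmatrix} and reduce each to Proposition~\ref{prop:heidentity}. I would prove $J_{-k}^{(\bal)} J_k^{(\bal)} = I$ directly, then deduce $J_k^{(\bal)} J_{-k}^{(\bal)} = I$ from the $\iota$-symmetry rather than re-running the combinatorial argument.

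For $J_{-k}^{(\bal)} J_k^{(\bal)}$, the nonvanishing conditions in the definitions of $A_{pa}^{-k}$ and $A_{aq}^k$ force $a \in [p-k,\, \min(p, q-k)]$, so the $(p,q)$-entry equals
\[
\sum_{p-k \leqslant a \leqslant \min(p, q-k)} h_{a-p+k}(\bal_{[a,p]}) \, e_{q-a-k}(-\bal_{(a,q)}).
\]
This range is empty (hence the sum is $0$) precisely when $p > q$, matching $\delta_{pq}$. For $p \leqslant q$, the shift $t = a + k$ rewrites the sum as
\[
\sum_{p \leqslant t \leqslant \min(p+k, q)} h_{t-p}(\bal_{[t-k,p]}) \, e_{q-t}(-\bal_{(t-k,q)}),
\]
which I would recognize as a truncation of the left-hand side of Proposition~\ref{prop:heidentity} (with $i = p$ and $j = q$). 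The truncation is harmless, because any additional term with $t \in (p+k, q]$ has $t - k > p$, so $\bal_{[t-k, p]}$ is empty and $h_{t-p}(\varnothing) = 0$ (as $t - p > 0$). Thus the extended sum equals the proposition's, giving $\delta_{pq}$.

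For $J_k^{(\bal)} J_{-k}^{(\bal)} = I$, I would invoke the automorphism $\iota$. A direct inspection of both cases of the definition of $A_{ij}^k$ shows $\iota A_{ij}^k = A_{1-j,\,1-i}^k$. Applying $\iota$ to the polynomial identity $(J_{-k}^{(\bal)} J_k^{(\bal)})_{pq} = \delta_{pq}$ just established, and relabeling the dummy index $a \mapsto 1 - a$, yields
\[
(J_k^{(\bal)} J_{-k}^{(\bal)})_{1-q,\,1-p} = \delta_{pq} = \delta_{1-q,\,1-p}.
\]
Since $p, q$ range over all of $\ZZ$, so do $1-q$ and $1-p$, and this is the required identity $J_k^{(\bal)} J_{-k}^{(\bal)} = I$.

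The main obstacle is just the index bookkeeping in the first step, specifically verifying that the truncated sum agrees with the full range of Proposition~\ref{prop:heidentity} by checking that the padding terms vanish; the $\iota$-symmetry step is then essentially formal, requiring only the one-line verification that $\iota$ acts on the matrix entries by the reflection-transposition $(i,j) \mapsto (1-j, 1-i)$.
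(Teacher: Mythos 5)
Your proof is correct. The first half (computing $J_{-k}^{(\bal)} J_k^{(\bal)}$ entrywise and reducing to Proposition~\ref{prop:heidentity}) is essentially the paper's own approach: the paper states tersely that $\sum_t A_{i,t}^{-k} A_{t,j}^k = \delta_{ij}$ ``is equivalent to~\eqref{eq:heidentity} after replacing $t$ by $t-k$,'' and you have usefully filled in the bookkeeping it elides, namely that the sum is empty when $p > q$ and that padding the summation range from $\min(p+k,q)$ out to $q$ only adds terms $h_{t-p}(\varnothing) = 0$.

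Where you genuinely diverge is the second half. The paper disposes of $J_k^{(\bal)} J_{-k}^{(\bal)} = I$ by observing that $J_k^{(\bal)}$ is a (shifted) unipotent triangular matrix in $\overline{\mfa}_{\infty}^+$, hence invertible, so a one-sided inverse is a two-sided inverse; this is stated as ``not hard to argue'' and left informal. You instead verify the reflection--transposition symmetry $\iota A_{ij}^k = A_{1-j,\,1-i}^k$ directly from the definition, apply $\iota$ to the already-proved identity, and relabel the dummy index. Your route is more concrete and self-contained --- it replaces an appeal to invertibility of an infinite triangular matrix with a one-line symbolic check of how $\iota$ permutes the matrix entries --- while the paper's route is shorter but relies on a claim the reader must supply. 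Both are valid; yours has the mild advantage of staying entirely within explicit entry-level computations and not requiring any discussion of what ``triangular and invertible'' means in $\overline{\mfa}_{\infty}^+$.
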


\begin{proof}
It is not hard to argue that the triangular matrix $J^{(\bal)}_k$ is invertible, so we only need to compute one or the other of the identities
$J_{- k}^{(\bal)} J_k^{(\bal)} = I$ and $J_k^{(\bal)} J_{-k}^{(\bal)} = I$.
Thus we want to prove
\begin{equation}
\label{eq:akinvid}
\sum_t A_{i, t}^{- k} A_{t, j}^k = \delta_{ij} .
\end{equation}
This is equivalent to~\eqref{eq:heidentity} after replacing $t$ by $t - k$.
\end{proof}

Lemma~\ref{lemma:jkinverse} is of limited significance since we will not be considering the matrix structure of $J_k^{(\bal)}$.
However it does allow us to finish our proof of Proposition~\ref{prop:commuting_currents}.

\begin{proof}[Proof of Proposition~\ref{prop:commuting_currents}]
Since $J_0 = I$ is central, we may asssume $k$ and $\ell$ are nonzero.
If they are both positive, then the claim holds by Lemma~\ref{lemma:aijkmatrix_pos}.
Now multiplying the identity $[J_k^{(\bal)}, J_{\ell}^{(\bal)}] = 0$ left and right by $J_{-\ell}^{(\bal)}$ and using Lemma~\ref{lemma:jkinverse}, we obtain $[J_k^{(\bal)}, J_{-\ell}^{(\bal)}] = 0$.
Repeating this procedure gives $[J_{-k}^{(\bal)}, J_{-\ell}^{(\bal)}] = 0$.
\end{proof}

Analogous to~\cite[Cor.~6.9]{BHS}, we obtain the generalizations of classical plethysm formulas and Newton identities from Proposition~\ref{prop:commuting_currents}.

\begin{corollary}[{\cite[Cor.~6.9]{BHS}}]
For $k>0$ and $d,a,b\in\ZZ$, then
\begin{subequations}
\begin{align}
\sum_{r+t = d} e_r(-\bal_{(a,a+r+k)}) e_t(-\bal_{(a+r+k,b)}) & = e_d(-\bal_{(a,b)}),
\allowdisplaybreaks \\
\sum_{s+u = d} h_u(\bal_{[b,b+u+k]}) h_s(\bal_{[b+u+k,a]}) & = h_d(\bal_{[b,a]}),
\allowdisplaybreaks \\
\sum_{r-u = d} e_r(-\bal_{(a,r+a+k)}) h_u(\bal_{[b,r+a+k]}) & = \begin{cases}
  e_d(-\bal_{(a,b)}) & \text{if } d < b-a, \\
  h_{-d}(\bal_{[b,a]}) & \text{if } d > b-a, \\
  \delta_{ab} & \text{if } d=b-a.
\end{cases}
\end{align}
\end{subequations}
\end{corollary}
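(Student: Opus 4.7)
The plan is to derive all three identities as immediate consequences of Proposition~\ref{prop:commuting_currents}, which in matrix–entry form reads
\[
\sum_{a \in \ZZ} A^k_{p,a} A^\ell_{a,q} = A^{k+\ell}_{p,q}
\]
for all $p, q, k, \ell \in \ZZ$. With the notational substitution $p \to a$ and $q \to b$ of the corollary, each of the three identities corresponds to a particular sign pattern for $(k,\ell)$; I would handle them in turn.

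For the first identity I would take $k,\ell > 0$, so that $A^k_{p,a}$, $A^\ell_{a,q}$, and $A^{k+\ell}_{p,q}$ are all $e$-type. Writing the summation index as $a = p + r + k$ (where $r = a - p - k$ is the degree of the first factor) and letting $t = q - a - \ell$ be the degree of the second, the support conditions give $r, t \geqslant 0$ and $r + t = q - p - k - \ell$. Reading off the definition of $A^n_{ij}$ and setting $d = q - p - k - \ell$ produces the stated $e$-$e$ identity. For the second identity I would take $k = -K$ and $\ell = -L$ with $K, L > 0$, so that all three entries are $h$-type; parametrizing the sum by the degree of one factor and relabeling gives the claimed $h$-$h$ identity.

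For the third identity I would take $k > 0$ and $\ell = -L$ with $L > 0$, so that the first factor $A^k_{p,a}$ is $e$-type while the second factor $A^{-L}_{a,q}$ is $h$-type. The right-hand side $A^{k-L}_{p,q}$ then splits into three cases: it is $e$-type when $k > L$ (giving the case $d < b - a$), it is $h$-type when $k < L$ (giving the case $d > b - a$), and it is $\delta_{p,q}$ when $k = L$ (giving the $\delta_{ab}$ case). The boundary case $k = L$ is precisely the content of Lemma~\ref{lemma:jkinverse}, which is already established via Proposition~\ref{prop:heidentity}; the remaining two cases come from substituting the explicit $e$- and $h$-type formulas for $A^n_{ij}$ and matching with the sum parametrization on the left.

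The only genuine work is bookkeeping. One must align the internal sum index $a$ with the degree parameters $r, t, u, s$ that appear in each stated formula, and verify that the support conditions on $A^n_{ij}$ (such as $a \geqslant p + k$ for the $e$-type factor and $q \leqslant a \leqslant q + L$ for the $h$-type factor) translate correctly into the summation ranges. I expect no new symmetric-function identities to be needed beyond Proposition~\ref{prop:commuting_currents}; the whole corollary is an unpacking of the associative multiplication $J_k^{(\bal)} J_\ell^{(\bal)} = J_{k+\ell}^{(\bal)}$ at the level of matrix entries, with three sign cases corresponding to the three displayed equations.
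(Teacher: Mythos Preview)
Your proposal is correct and matches the paper's own (implicit) proof: the paper states the corollary immediately after Proposition~\ref{prop:commuting_currents} with the remark that it is obtained ``from Proposition~\ref{prop:commuting_currents},'' and gives no further argument. Your plan to read off the three identities from the matrix-entry form $\sum_m A^k_{p,m}A^{\ell}_{m,q}=A^{k+\ell}_{p,q}$ according to the sign pattern of $(k,\ell)$ is exactly this unpacking, and your observation that the boundary case of the third identity is Lemma~\ref{lemma:jkinverse} (equivalently Proposition~\ref{prop:heidentity}) is also how the paper organizes things.
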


\subsection{Central extensions}

Next, we modify $\overline{\mfa}_{\infty}^+$ by adding a cocycle, which we use to introduce a central extension following~\cite{KacRaina}.
In this setting, we will see that the deformed current operators almost commute; more specifically, they will satisfy the Heisenberg relations.

There is the cocycle $\varphi$ on $\gl_{\infty}$ defined by
\begin{equation}
\label{eq:cocycle}
\varphi (E_{ij}, E_{ji}) = - \varphi (E_{ji}, E_{ij}) =
\begin{cases}
 1 & \text{if $i \leqslant 0, j > 0$,}\\
 -1 & \text{if $i>0$, $j\leqslant 0$}\\
 0 & \text{in all other cases,}
\end{cases}
\end{equation}
We note that for $A = (a_{i j})$ and $B = (b_{i j})$ in $\overline{\mfa}^+_{\infty}$, we (formally) expand $\varphi(A,B)$ linearly as
\[
\sum_{i,j,k,l} a_{ij}b_{kl}\varphi(E_{ij},E_{kl}) = \sum_{i,j} a_{ij}b_{ji}\varphi(E_{ij},E_{ji}) = \sum_{\substack{i \leqslant 0 \\ j > 0}} a_{ij}b_{ji} - \sum_{\substack{i > 0\\ j \leqslant 0}} a_{ij}b_{ji}.
\]
Since $a_{ij}$ vanishes unless $i-j < N_A$ for some $N_A$, and $b_{ji}$ vanishes unless $j-i < N_B$ for some $N_B$, the support of the sum is restricted to a band $\abs{i-j} \leqslant \max(N_A, N_B)$.
This support intersects the two quadrants in a finite number of terms.
Hence the sum is finite, so $\varphi$ can be extended by linearity to~$\overline{\mfa}^+_{\infty}$.

The cocycle $\varphi$ is indeed an element of the group $Z^2(\overline{\mfa}^+_{\infty}, \CC^{\times})$, meaning that it is skew-symmetric and satisfies the cocycle relation:
\[
\varphi([A, B], C) + \varphi([B, C], A) + \varphi([C, A], B) = 0.
\]
This is easily checked if $A, B, C \in \gl_{\infty}$ as $\varphi$ is a coboundary for $\gl_{\infty}$:
\[
 \varphi(A, B) = f([A, B]),
 \qquad\qquad \text{ where }
 f (E_{i j}) = \begin{cases}
 1 & \text{if $i = j \leqslant 0$,}\\
 0 & \text{otherwise},
\end{cases}
\]
and extended by linearity.
Thus for $A, B, C \in \gl_{\infty}$, the cocycle relation follows from the Jacobi identity.
The cocycle relation then follows by linearity for all $A, B, C \in \overline{\mfa}_{\infty}^+$, even though the function $f$ does not extend to $\overline{\mfa}_{\infty}^+$.

Using the cocycle $\varphi$, we construct a central extension $\mfa_{\infty}^+$ of $\overline{\mfa}_{\infty}^+$.
As a Lie algebra, the central extension is the vector space $\overline{\mfa}_{\infty} \oplus \CC \cdot \one$, where $\one$ is a central element, with the Lie bracket defined by
\[
[A, B] = AB - BA + \varphi(A, B) \cdot \one.
\]
All of these statements also hold for the ring $\overline{\mfa}_{\infty}^-$, the space of matrices $A = (a_{ij})$ such that $a_{ij} = 0$ unless $i - j > N$ for some $N$, which are matrices close to the lower triangular matrices.
We denote the corresponding central extension by $\mfa_{\infty}^-$.

As a historical remark, the corresponding central extension $\mfa_{\infty}$ of $\overline{\mfa}_{\infty}$ was introduced independently by Date, Jimbo, Kashiwara and Miwa~\cite{DateJimboKashiwaraMiwa} and by Kac and Peterson~\cite{KacPetersonSpin}.
The rings $\overline{\mfa}_{\infty}^{\pm}$ and their Lie algebras have made an appearance in~\cite[\S14.10]{KacInfinite}, where it was denoted by $\widetilde{\gl}_{\infty}$ and only considered as a Lie algebra.
However, as far as we are aware, the central extensions $\mfa_{\infty}^{\pm}$ have not appeared explicitly before in the literature.

The action of $\gl_{\infty}$ from the representation $r \colon \gl_{\infty} \to \End(\fermionfock)$ does not extend to $\overline{\mfa}_\infty$ because (for example) this would be divergent if $M = I$.
However, following~\cite[Ch.~4]{KacRaina}, we may modify it to obtain an action on $\overline{\mfa}_\infty$ or more generally on $\overline{\mfa}_\infty^+$.
First, we adjust the representation to obtain a projective representation $\widehat{r}\colon \gl_\infty \to \End(\fermionfock)$ by
\begin{equation}
\label{eq:rhat}
\widehat{r} (E_{i i}) = \begin{cases}
 r (E_{i i}) & \text{if $i > 0$},\\
 r (E_{i i}) - I & \text{if $i \leqslant 0$},
\end{cases}
\end{equation}
with $\widehat{r} (E_{i j}) = r (E_{i j})$ when $i \neq j$.
This modification eliminates the divergences, and $\widehat{r}$ extends by linearity to a projective representation of $\overline{\mfa}_{\infty}^{\pm}$.
We find that
\begin{equation}
\label{eq:central_extension}
\widehat{r} ([E_{i j}, E_{k l}]) = [\widehat{r} (E_{i j}), \widehat{r} (E_{kl})] + \varphi (E_{i j}, E_{k l}) \cdot \mathbf{1},
\end{equation}
where $\mathbf{1}$ acts as the identity operator.
Thus $\widehat{r} \colon \mfa_{\infty}^{\pm} \longrightarrow \End(\fermionfock)$ is a representation.

\begin{theorem}[{\cite[Thm.~4.4]{BHS}}]
\label{thm:heisenberg_relations}
As elements of $\mfa_{\infty}^+$, the deformed current operators $J_k^{(\bal)}$ generate a Heisenberg Lie algebra; that is, they satisfy
\[
\bigl[ J_k^{(\bal)}, J_l^{(\bal)} \bigr] = k \delta_{k, - l} \cdot \mathbf{1}.
\]
\end{theorem}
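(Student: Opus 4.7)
The plan is to apply the defining formula for the Lie bracket in the central extension $\mfa_\infty^+$, namely $[A,B] = AB - BA + \varphi(A,B) \cdot \one$, and use the commutativity already established in Proposition~\ref{prop:commuting_currents}. Since $J_k^{(\bal)} J_\ell^{(\bal)} = J_{k+\ell}^{(\bal)} = J_\ell^{(\bal)} J_k^{(\bal)}$ as elements of $\overline{\mfa}_\infty^+$, the associative commutator vanishes and hence $[J_k^{(\bal)}, J_\ell^{(\bal)}] = \varphi(J_k^{(\bal)}, J_\ell^{(\bal)}) \cdot \one$ in $\mfa_\infty^+$. The entire problem thus reduces to evaluating the cocycle $\varphi$ on our two currents, and then matching the result to the symmetric-function identity proved in Proposition~\ref{prop:cocyclecase}.

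Expanding $\varphi$ bilinearly using the formula for $\varphi(E_{ij},E_{ji})$ gives
\[
\varphi(J_k^{(\bal)}, J_\ell^{(\bal)}) = \sum_{\substack{i \leqslant 0 \\ j > 0}} A_{ij}^k A_{ji}^{\ell} \;-\; \sum_{\substack{i > 0 \\ j \leqslant 0}} A_{ij}^k A_{ji}^{\ell}.
\]
From here I would proceed by a case analysis on the signs of $k$ and $\ell$, using the support of the coefficients $A_{ij}^k$: for $k>0$, nonvanishing requires $j \geqslant i+k$; for $k<0$, it requires $i \leqslant j \leqslant i-k$; and $A_{ij}^0 = \delta_{ij}$. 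When $k$ and $\ell$ share a sign (or one is zero), the combined support conditions are incompatible with the quadrant restrictions $\{i\leqslant 0 < j\}$ and $\{i>0\geqslant j\}$, so every term is zero, matching the desired value $k\delta_{k,-\ell} = 0$. In the remaining case $k>0$ and $\ell<0$, the second sum vanishes because $j \geqslant i+k > 0$ contradicts $j \leqslant 0$, and substituting $A_{ij}^k = e_{j-i-k}(-\bal_{(i,j)})$ and $A_{ji}^\ell = h_{i-j-\ell}(\bal_{[i,j]})$ rewrites the first sum as precisely the left-hand side of~\eqref{eq:cocyclecase} with parameter $\ell_{\text{Prop.~\ref{prop:cocyclecase}}} = -\ell$, so Proposition~\ref{prop:cocyclecase} gives the value $k\delta_{k,-\ell}$. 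The symmetric case $k<0<\ell$ is then handled by the skew-symmetry $\varphi(J_k^{(\bal)}, J_\ell^{(\bal)}) = -\varphi(J_\ell^{(\bal)}, J_k^{(\bal)})$, which produces the desired value $k\delta_{k,-\ell}$ after noting that $-\ell = k$ whenever the Kronecker delta is nonzero.

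The hard part is not conceptual but purely bookkeeping: one must carefully verify that the triple of constraints imposed on $(i,j)$ by (a)~the support of $A_{ij}^k$, (b)~the support of $A_{ji}^\ell$, and (c)~the quadrant $\{i\leqslant 0 < j\}$ arising from $\varphi$, recombine exactly into the index set $T = \{(i,j) : 1 \leqslant j \leqslant -\ell,\ j+\ell \leqslant i \leqslant \min(0,j-k)\}$ of Proposition~\ref{prop:cocyclecase}. Once this matching of inequalities is nailed down, the deep combinatorial content is contained in the involution argument already executed in the proof of Proposition~\ref{prop:cocyclecase}, and the argument concludes cleanly.
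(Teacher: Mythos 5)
Your proof matches the paper's approach exactly: reduce to the cocycle via Proposition~\ref{prop:commuting_currents} and the definition $[A,B]=AB-BA+\varphi(A,B)\cdot\one$, then evaluate $\varphi(J_k^{(\bal)},J_\ell^{(\bal)})$ by the same sign case analysis, handling the nontrivial case $k>0>\ell$ via Proposition~\ref{prop:cocyclecase}. One small bookkeeping slip: for $k<0$ the support condition for $A_{ij}^k$ is $j\leqslant i\leqslant j-k$ (lower triangular), not $i\leqslant j\leqslant i-k$ as you wrote; fortunately the index set $T$ you state at the end is correct, so the reduction to Proposition~\ref{prop:cocyclecase} goes through as intended.
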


We will give an entirely different proof of Theorem~\ref{thm:heisenberg_relations} here by a direct computation in our algebra.
In order to show Theorem~\ref{thm:heisenberg_relations}, it is sufficient to show how the deformed current operators behave under the cocycle.
The result follows from~\eqref{eq:central_extension} and Proposition~\ref{prop:commuting_currents}.

\begin{proposition}
\label{prop:jkcocycle}
We have
\[
\varphi(J^{(\bal)}_k, J^{(\bal)}_{\ell}) = \begin{cases}
   k & \text{if $\ell = - k$,}\\
   0 & \text{otherwise}.
 \end{cases}
\]
\end{proposition}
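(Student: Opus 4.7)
The plan is a direct calculation that expands $\varphi(J_k^{(\bal)},J_\ell^{(\bal)})$ via its definition into a double sum of products $A_{ij}^k A_{ji}^\ell$, then uses the support conditions on $A_{ij}^k$ to reduce the only nontrivial contribution to an instance of Proposition~\ref{prop:cocyclecase}. By linearity,
\[
\varphi(J_k^{(\bal)},J_\ell^{(\bal)}) = \sum_{\substack{i\leqslant 0\\ j>0}} A_{ij}^k A_{ji}^\ell \;-\; \sum_{\substack{i>0\\ j\leqslant 0}} A_{ij}^k A_{ji}^\ell,
\]
and recall that $A_{ij}^k$ is supported on $j\geqslant i+k$ for $k>0$, on $j=i$ for $k=0$, and on $j\leqslant i\leqslant j-k$ for $k<0$.

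First I would dispose of the easy pairs $(k,\ell)$, all of which satisfy $\ell\neq -k$ so the target value is $0$. If $k=0$ or $\ell=0$, the corresponding $A$-matrix is diagonal, incompatible with both $i\leqslant 0<j$ and $i>0\geqslant j$. If $k,\ell>0$, then $A_{ij}^k\neq 0$ forces $j>i$ while $A_{ji}^\ell\neq 0$ forces $i>j$, a contradiction. If $k,\ell<0$, the joint support forces $i=j$, again incompatible with the cocycle ranges. In each of these situations every summand vanishes, so $\varphi = 0$ as required.

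The substantive case is opposite signs. Take $k>0$ and $\ell=-m<0$; the case $k<0$, $\ell>0$ is symmetric with the two sums swapped. The second sum vanishes because $A_{ij}^k\neq 0$ forces $j>i$, yet $i>0\geqslant j$ gives $i>j$. In the first sum, the joint support conditions yield $k\leqslant j-i\leqslant m$, so the sum is empty (and $\varphi=0$) whenever $k>m$; this agrees with the claim since then $\ell=-m\neq -k$. When $k\leqslant m$, I would substitute
\[
A_{ij}^k = e_{j-i-k}(-\bal_{(i,j)}), \qquad A_{ji}^{-m} = h_{i-j+m}(\bal_{[i,j]}),
\]
and observe that the conditions $i\leqslant 0 < j$, $k\leqslant j-i\leqslant m$ repackage as $1\leqslant j\leqslant m$, $j-m\leqslant i\leqslant\min(0,j-k)$. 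This is precisely the indexing set of the left-hand side of Proposition~\ref{prop:cocyclecase} with the parameters there taken to be $(k,m)$, and that proposition evaluates the sum to $k\,\delta_{k,m}=k\,\delta_{\ell,-k}$. The symmetric opposite-sign case produces $-\ell\,\delta_{\ell,-k}=k\,\delta_{\ell,-k}$, completing the proof. The main obstacle is this last index-range repackaging, which is a short but fiddly verification; beyond Proposition~\ref{prop:cocyclecase} no new symmetric-function identities are needed.
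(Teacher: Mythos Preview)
Your proposal is correct and follows essentially the same route as the paper's own proof: expand $\varphi$ via the cocycle definition into the two quadrant sums of $A_{ij}^kA_{ji}^{\ell}$, dispose of the sign-aligned cases by the support conditions on $A_{ij}^k$, and in the opposite-sign case reduce the surviving sum to Proposition~\ref{prop:cocyclecase}. Your explicit index repackaging is a bit more detailed than the paper's, which simply asserts that substituting the definitions yields the sum in Proposition~\ref{prop:cocyclecase} with $\ell$ replaced by $-\ell$.
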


\begin{proof}
By the definition of $\varphi$, we have
\begin{equation}
\label{eq:jkjlcom}
\varphi(J^{(\bal)}_k, J^{(\bal)}_l) = \sum_{i, j} A_{ij}^k A_{ji}^{\ell} \varphi(E_{ij}, E_{ji})
= \sum_{\substack{i \leqslant 0 \\ j > 0}} A_{ij}^k A_{ji}^{\ell} - \sum_{\substack{ i > 0\\ j \leqslant 0}} A_{ij}^k A_{ji}^{\ell}.
\end{equation}
It is not hard to see that both sums are finite due to the support conditions of $A_{ij}^k$, which we summarize.
If $k = 0$, then $A_{ij}^0 = \delta_{ij}$.
If $k > 0$ then $A_{ij}^k = 0$ unless $j \geqslant i + k$, and if $k < 0$, then $A_{ij}^k = 0$ unless $j \leqslant i \leqslant j - k$.
We now proceed in checking the claim case-by-case.

If either $k$ or $\ell$ is zero, $A_{ij}^k A_{ji}^{\ell} \neq 0$ implies $i = j$ and both terms in~\eqref{eq:jkjlcom} vanish.

If both $k, \ell > 0$, and $A_{ij}^k$ and $A_{ji}$ are nonzero, then $j \geqslant i + k$ and $i \geqslant j + \ell$.
Thus $i > j > i$, which is a contradiction, so again both terms in~\eqref{eq:jkjlcom} vanish.

If both $k, \ell < 0$, and if $A_{ij}^k$ and $A_{ji}^{\ell}$ are nonzero then $j \leqslant i \leqslant j - k$ and $i \leqslant j \leqslant i - \ell$, which is possible, but only if $i = j$, so again both terms in~\eqref{eq:jkjlcom} vanish.

We may thus assume that $k$ and $\ell$ have opposite signs, so without loss of generality assume $k > 0$ and $\ell < 0$.
Then $A_{ij}^k A_{ji}^{\ell} \neq 0$ implies that $j \geqslant i + k$ and $i \leqslant j \leqslant i - \ell$, which implies that $k \leqslant - \ell$.
The second term in~\eqref{eq:jkjlcom} vanishes because $i \leqslant j$ is incompatible with $i > 0$ and $j \leqslant 0$. In this case, substituting the definitions of $A_{ij}^k$ and $A_{ji}^{\ell}$ gives exactly the summation in Lemma~\ref{prop:cocyclecase}, (with $\ell$ replaced by its negative) and we are done.
\end{proof}

\section{Double supersymmetric functions}
\label{sec:double_funcs}

We describe the ring of double supersymmetric functions given by Molev~\cite{Molev09} but following the conventions in~\cite{BHS}.
In particular, to get the functions in~\cite{Molev09}, apply $\iota$ to the $\bal$ parameters in all of the formulas given here.
On the other hand, to match~\cite{MolevFactorialSupersymmetric}, we need to restrict to finitely many variables $\xx_n / \yy_n$ and then apply $\sigma^n$.

We take $\ZZ[\bal]$ as our coefficient ring unless otherwise stated.

We define the \defn{double powersum}, \defn{homogeneous}, and \defn{elementary double supersymmetric functions} as
\begin{align*}
p_k(\xx / \yy \dv \bal) & = p_k(\xx / \yy) = p_k(\xx) - p_k(-\yy) = \sum_{i=1}^{\infty} x_i^k - (-y_i)^k,
\\ h_k(\xx / \yy \dv \bal) & = \sum_{b+a=k} \sum_{\substack{i_1 \geqslant \cdots \geqslant i_b \\ j_1 < \cdots < j_a}} (y_{j_1} + \alpha_{1-j_1}) \cdots (y_{j_a} + \alpha_{a-j_a}) (x_{i_1} - \alpha_{a+1-i_1}) \cdots (x_{i_b} - \alpha_{k-i_b}),
\\ e_k(\xx / \yy \dv \bal) & = \sum_{b+a=k} \sum_{\substack{i_1 > \cdots > i_b \\ j_1 \leqslant \cdots \leqslant j_a}} (y_{j_1} + \alpha_{1-j_1}) \cdots (y_{j_a} + \alpha_{-j_a-a}) (x_{i_1} - \alpha_{-i_1-a-1}) \cdots (x_{i_b} - \alpha_{-i_b-k}).
\end{align*}
For $\bal = 0$, these become the classical supersymmetric functions, which further reduce to the classical symmetric functions when $\yy = 0$ (see, \textit{e.g.},~\cite{MacdonaldBook,ECII} for more details).
We remark that our double powersums are simply the supersymmetric powersum functions, but this is \emph{not} true for the double homogeneous and elementary cases.
Furthermore, the supersymmetric functions are a plethystic substitution $f[\xx - (-1)\yy]$ of the corresponding usual symmetric function $f(\xx) = f[\xx]$.
For a double supersymmetric function $f(\xx/\yy \dv \bal)$, we write the corresponding supersymmetric (resp.\ symmetric) function as $f(\xx/\yy)$ (resp.\ $f(\xx)$).

\begin{remark}
\label{rem:substitution}
For $\bal = \alpha$, the factorial supersymmetric functions are simply the supersymmetric functions with the substitution $x_i \mapsto x_i - \alpha$ and $y_j \mapsto y_j + \alpha$.
In other words, they reduce to the classical case when specializing $\bal$ to the same value.
\end{remark}

From~\cite[Thm.~2.1]{MolevFactorialSupersymmetric} (\textit{cf.}~\cite[Cor.~3.2]{Molev09} at $z_i = 0$ for $i > 1$ or~\cite[Eq.~(45)]{BHS} at $\bbe = 0$), we have the following generating series
\begin{subequations}
\label{eq:double_gf}
\begin{align}
\label{eq:double_h_gf}
\sum_{k=0}^{\infty} \frac{h_k(\xx / \yy \dv \bal)}{z^k (z^{-1}|\bal)^k} & = \prod_{i=1}^{\infty} \frac{1 + y_i z}{1 - x_i z}
= \exp \left( \sum_{k=1}^{\infty} \frac{1}{k} p_k(\xx/\yy\dv\bal)  z^k \right),
\\
\label{eq:double_e_gf}
\sum_{k=0}^{\infty} (-1)^k e_k(\xx / \yy \dv \bal) (z^{-1}|\bal)^{-k} & = \prod_{i=1}^{\infty} \frac{1 - x_i z}{1 + y_i z}
= \exp \left( -\sum_{k=1}^{\infty} \frac{1}{k} p_k(\xx/\yy\dv\bal) z^k \right),
\end{align}
\end{subequations}
where here we have followed~\cite{Molev09} but reindexed the $\bal$ parameters and taken the $m = n \to \infty$ (projective) limit (in particular, see~\cite[Eq.~(2.7)]{Molev09}).
Note that to obtain $n < \infty$ variables of $\xx$ and $\yy$, we simultaneously specialize to $\xx_n$ and $\yy_n$.
Then if we then want to remove an additional $\xx$ or $\yy$ variables, we either set $x_i = \alpha_i$ or $y_i = -\alpha_i$ as appropriate.
The right equalities in~\eqref{eq:double_gf} can be seen from plethystic substitutions $\xx \mapsto \xx - (-1)\yy$ from the classical formulas (see, \textit{e.g.}~\cite[Ch.~7]{ECII}).

\begin{example}
Using our definition of $h_k(\xx / \yy \dv \bal)$ with $i_1, j_a \leqslant 2$ and regrouping as in~\cite[Eq.~(2.12)]{Molev09}, we compute
\begin{align*}
h_2(\xx_1/\yy_1 \dv \bal) & = (x_1 - \alpha_0)(x_1 - \alpha_1) + (y_1 + \alpha_0)(x_1 - \alpha_1),
\\
h_2(\xx_2/\yy_2 \dv \bal) & = (x_2 - \alpha_{-1})(x_2 - \alpha_0) + (x_2 - \alpha_{-1})(x_1 - \alpha_1) + (x_1 - \alpha_0)(x_1 - \alpha_1)
\\ & \hspace{20pt} + \bigl((y_1 + \alpha_0) + (y_2 + \alpha_{-1}) \bigr) \bigl((x_2 - \alpha_0) + (x_1 - \alpha_1) \bigr) + (y_1 + \alpha_0)(y_2 + \alpha_0)
\end{align*}
One can verify that taking $x_2 = y_2 = 0$ in the second formula yields the first.
Recall from~\cite[Sec.~2.4]{Molev09} that an \defn{$\mathbb{A}$-tableau} is a weakly increasing tableau in the alphabet $\{1' < 1 < 2' < 2 < \cdots\}$ such that at most one primed (resp.\ unprimed) $i$ is in each row (resp.\ column) and after  applying $\iota$, we have
\begin{equation}
\label{eq:A_tableau_formula}
s_{\lambda/\mu}(\xx_n/\yy_n\dv\bal) = \sum_T \prod_{\substack{\bbb \in \lambda/\mu\\T(\bbb) \text{ unprimed}}} (x_{T(\bbb)} - \alpha_{c(\bbb)}) \prod_{\substack{\bbb \in \lambda/\mu\\T(\bbb) \text{ primed}}} (y_{T(\bbb)} + \alpha_{c(\bbb)}).
\end{equation}
Restricting to two variables, we compute
\begin{align*}
\ytableausetup{boxsize=1em}%
\ytableaushort{{1'}1} & \; (y_1+\alpha_0)(x_1-\alpha_1),
&
\ytableaushort{{1'}{2'}} & \; (y_1+\alpha_0)(y_2+\alpha_1),
&
\ytableaushort{{1'}2} & \; (y_1+\alpha_0)(x_2+\alpha_1),
\\
\ytableaushort{11} & \; (x_1-\alpha_0)(x_1-\alpha_1),
&
\ytableaushort{1{2'}} &\;  (x_1-\alpha_0)(y_2+\alpha_1),
&
\ytableaushort{12} & \; (x_1-\alpha_0)(x_2-\alpha_1),
\\
\ytableaushort{{2'}2} & \; (y_2+\alpha_0)(x_2-\alpha_1),
&
\ytableaushort{22} & \; (x_2-\alpha_0)(x_2-\alpha_1).
\end{align*}
It is a straightforward check that the sum of the weights of these tableaux is equal to $h_2(\xx_2 / \yy_2 \dv \bal)$.
\end{example}

We also have the classical (supersymmetric) formulas
\begin{align*}
\sum_{k=0}^{\infty} h_k(\xx / \yy) z^k & = \prod_{i=1}^{\infty} \frac{1 + y_i z}{1 - x_i z},
&
\sum_{k=0}^{\infty} (-1)^k e_k(\xx / \yy) z^k & = \prod_{i=1}^{\infty} \frac{1 - x_i z}{1 + y_i z}.
\end{align*}
Hence, we can then apply Proposition~\ref{prop:shifted_cob} with~\eqref{eq:double_gf} to express the double homogeneous (resp.\ elementary) supersymmetric functions in terms of the classical homogeneous (resp.\ elementary) supersymmetric functions (which can be obtained from setting $\bal = 0$).

\begin{proposition}
\label{prop:eh_expansions}
For $k > 0$, we have 
\begin{align*}
h_k(\xx / \yy \dv \bal) & = \sum_{m=1}^k e_{k-m}(-\bal_{(0,k)}) h_m(\xx / \yy),
&
e_k(\xx / \yy \dv \bal) & = \sum_{m=1}^k e_{k-m}(\bal_{(-k+1,1)}) e_m(\xx / \yy),
\\
h_k(\xx / \yy) & = \sum_{m=1}^k h_{k-m}(\bal_{[1,m]}) h_m(\xx / \yy \dv \bal),
&
e_k(\xx / \yy) & = \sum_{m=1}^k h_{k-m}(-\bal_{[1-m,0]}) e_m(\xx / \yy \dv \bal),
\end{align*}
\end{proposition}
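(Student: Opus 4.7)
The four formulas in Proposition~\ref{prop:eh_expansions} describe two mutually inverse changes of basis, namely $h_m(\xx/\yy) \leftrightarrow h_k(\xx/\yy\dv\bal)$ and $e_m(\xx/\yy) \leftrightarrow e_k(\xx/\yy\dv\bal)$. My plan is to derive all four from the generating function identities in~\eqref{eq:double_gf}, combined with the fact that their right-hand sides are also the classical generating functions: $\sum_m h_m(\xx/\yy)z^m = \prod_i (1+y_iz)/(1-x_iz)$ and $\sum_m (-1)^m e_m(\xx/\yy) z^m = \prod_i (1-x_iz)/(1+y_iz)$. Thus both generating functions provide two distinct expansions of the same element of $\ZZ[\bal,\xx,\yy]\FLS{z}$, one in a shifted-power basis built from the $\bal$-parameters and one in the usual basis of powers of $z$. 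Proposition~\ref{prop:shifted_cob}, reinforced by~\eqref{eq:shifted_inversion}, will serve as the change-of-basis dictionary between these.

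For the formulas in the bottom row of Proposition~\ref{prop:eh_expansions} (expressing classical in terms of double), the plan is to expand the shifted-power factor on the left-hand side of each generating function as a power series in $z$, and then match the $z^m$ coefficients with the right-hand side. Concretely,~\eqref{eq:shifted_inversion} identifies $1/(z^{-1}|\bal)^k$ with $(z^{-1}|\sigma^k\bal)^{-k}$, and so the fourth formula of Proposition~\ref{prop:shifted_cob}, applied with $\bal$ replaced by $\sigma^k\bal$, yields $1/(z^{-1}|\bal)^k = \sum_{j \geqslant k} h_{j-k}(\bal_{[1,k]})\, z^j$. Substitution into~\eqref{eq:double_h_gf} and extraction of the $z^k$ coefficient produces the stated expansion of $h_k(\xx/\yy)$. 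Similarly, Proposition~\ref{prop:shifted_cob} directly gives $(z^{-1}|\bal)^{-k} = \sum_{j \geqslant k} h_{j-k}(\bal_{[1-k,0]})\, z^j$; substituting into~\eqref{eq:double_e_gf} and absorbing the sign via the identity $h_r(-\bal) = (-1)^r h_r(\bal)$ gives the stated expansion of $e_k(\xx/\yy)$.

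For the formulas in the top row (expressing double in terms of classical), there are two clean routes. The direct one is to expand $z^m$ in the shifted-power basis via the third formula of Proposition~\ref{prop:shifted_cob}, namely $z^m = \sum_{k \geqslant m} e_{k-m}(-\bal_{(1-k,1)})(z^{-1}|\bal)^{-k}$, substitute into the classical side of~\eqref{eq:double_e_gf}, and match $(z^{-1}|\bal)^{-k}$ coefficients; the identity $e_r(-\bal) = (-1)^r e_r(\bal)$ then cancels the signs and produces the formula for $e_k(\xx/\yy\dv\bal)$. For the $h$ case, the symmetric maneuver works, but it is more uniform with Section~\ref{sec:prelims} to isolate $h_n(\xx/\yy\dv\bal)$ by the residue pairing $\oint \cdot\,(z^{-1}|\bal)^{n-1}\,\frac{dz}{2\pi\ii z^2}$, which picks out the desired coefficient by Proposition~\ref{prop:orthonormality}. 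Applying this to the $h$-generating function and using the Laurent expansion $(z^{-1}|\bal)^{n-1} = \sum_{k=0}^{n-1} e_{n-1-k}(-\bal_{(0,n)})\, z^{-k}$ from Proposition~\ref{prop:shifted_cob}, the only surviving contribution comes from $m - 1 = k$, giving the stated formula for $h_k(\xx/\yy\dv\bal)$.

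There is no substantial obstacle here: all four formulas are linear bookkeeping statements packaged together with the change-of-basis data in Proposition~\ref{prop:shifted_cob}, the inversion formula~\eqref{eq:shifted_inversion}, and (optionally) Proposition~\ref{prop:orthonormality}. The only subtlety is tracking the simultaneous index shifts $\bal \mapsto \sigma^k\bal$ and the sign identities $h_r(-\bal) = (-1)^r h_r(\bal)$ and $e_r(-\bal) = (-1)^r e_r(\bal)$, which must be applied consistently to reconcile the $\bal_{[1-k,0]}$ parameter windows in the $e$-side derivation with the $-\bal_{[1-m,0]}$ windows that appear in the final statement.
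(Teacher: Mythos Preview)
Your proposal is correct and follows essentially the same approach as the paper: equate the two generating series in~\eqref{eq:double_gf} with the classical ones, convert between the power basis $\{z^m\}$ and the shifted-power basis using Proposition~\ref{prop:shifted_cob} together with~\eqref{eq:shifted_inversion}, and read off coefficients. The paper carries out only the $h_k(\xx/\yy\dv\bal)$ case in detail (expanding $z^m$ in the shifted-power basis and equating coefficients) and dismisses the rest as similar; your write-up treats all four cases explicitly, and your optional alternative via the residue pairing of Proposition~\ref{prop:orthonormality} is a harmless variant of the same coefficient extraction.
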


\begin{proof}
We compute
\begin{align*}
\sum_{k=0}^{\infty} \frac{h_k(\xx / \yy \dv \bal)}{z^k(z^{-1}|\bal)^k} & = \prod_{i=1}^{\infty} \frac{1 + y_i z}{1 - x_i z} = \sum_{m=0}^{\infty} h_m(\xx / \yy) z^m
\\ & = 1 + \sum_{m=1}^{\infty} h_m(\xx / \yy) \sum_{k=m}^{\infty} \frac{e_{k-m}(-\alpha_1,\dotsc,-\alpha_{k-1})}{z^k(z^{-1}|\bal)^k},
\\ & = 1 + \sum_{k=1}^{\infty}  \frac{1}{z^k(z^{-1}|\bal)^k} \sum_{m=1}^k e_{k-m}(-\alpha_1,\dotsc,-\alpha_{k-1}) h_m(\xx / \yy),
\end{align*}
where we used Proposition~\ref{prop:shifted_cob} after applying $\iota\sigma$.
The claim for $h_k(\xx / \yy \dv \bal)$ follows from equating coefficients of $\frac{1}{z^k(z^{-1}|\bal)^k}$.
The other claims are proved similarly.
\end{proof}

\begin{example}
Clearly $h_0(\xx/\yy\dv\bal) = e_0(\xx/\yy\dv\bal) = h_0(\xx/\yy) = e_0(\xx/\yy) = 1$.
It is straightforward to see that $h_1(\xx/\yy \dv \bal) = h_1(\xx/\yy)$ and $e_1(\xx/\yy \dv \bal) = e_1(\xx/\yy)$.
Next, we have
\begin{align*}
h_2(\xx_1 / \yy_1 \dv \bal) & = x_1^2 + x_1 y_1 - \alpha_1 (x_1 + y_1) = h_2(\xx_1 / \yy_1) + e_1(-\alpha_1) h_1(\xx_1 / \yy_1),
\\ h_3(\xx_1 / \yy_1 \dv \bal) & = (x_1 - \alpha_0) (x_1 - \alpha_1) (x_1 - \alpha_2) + (y_1 + \alpha_0) (x_1 - \alpha_1) (x_1 - \alpha_2)
\\ & = x_1^3 + x_1^2 y_1 - (\alpha_1 + \alpha_2)(x_1^2 + x_1 y_1) + \alpha_1 \alpha_2 (x_1 + y_1)
\\ & = h_3(\xx_1 / \yy_1) + e_1(-\alpha_1, -\alpha_2) h_2(\xx_1 / \yy_1) + e_2(-\alpha_1, -\alpha_2) h_1(\xx_1 / \yy_1),
\\ e_2(\xx_1 / \yy_1 \dv \bal) & = (y_1 + \alpha_1)(y_1 + \alpha_2) + (y_1 + \alpha_1)(x_1 - \alpha_2)
\\ & = x_1 y_1 + y_1^2 + \alpha_0(x_1 + y_1) = e_2(\xx_1 / \yy_1) + e_1(\alpha_0) e_1(\xx_1 / \yy_1),
\\ e_3(\xx_1 / \yy_1 \dv \bal) & = (y_1 + \alpha_0)(y_1 + \alpha_{-1})(y_1 + \alpha_{-2}) + (y_1 + \alpha_0)(y_1 + \alpha_{-1})(x_1 - \alpha_{-2})
\\ & = x_1 y_1^2 + y_1^3 + (\alpha_{-1} + \alpha_0)(x_1 y_1 + y_1^2) + \alpha_{-1} \alpha_0 (x_1 + y_1)
\\ & = e_3(\xx_1 / \yy_1) + e_1(\alpha_{-1}, \alpha_0) e_2(\xx_1 / \yy_1) + e_2(\alpha_{-1}, \alpha_0) e_1(\xx_1 / \yy_1).
\end{align*}
\end{example}

For partitions $\lambda, \mu$ of length at most $\ell$, we define the \defn{double Schur functions} by the Jacobi--Trudi-type formula~\cite[Eq.~(2.10), (2.11)]{Molev09} (\textit{cf.}~\cite[Thm.~3.1,~3.3]{MolevFactorialSupersymmetric})
\begin{align*}
s_{\lambda/\mu}(\xx/\yy \dv \bal) & := \det \bigl[ h_{\lambda_i-\mu_j-i+j}(\xx/\yy \dv \sigma^{\mu_j-j+1} \bal) \bigr]_{i,j=1}^{\ell}
\\
& = \det \bigl[ e_{\lambda_i'-\mu_j'-i+j}(\xx/\yy \dv \sigma^{j-\mu_j'-1} \bal) \bigr]_{i,j=1}^{\ell}.
\end{align*}

\begin{example}
We have
\begin{align*}
s_{22/1}(\xx/\yy \dv \bal) & = 
\det \begin{bmatrix}
h_1(\xx/\yy \dv \sigma\bal) & h_3(\xx/\yy \dv \sigma^{-1}\bal) \\
h_0(\xx/\yy \dv \sigma\bal) & h_2(\xx/\yy \dv \sigma^{-1}\bal)
\end{bmatrix}
\allowdisplaybreaks
\\ & = h_1(\xx/\yy \dv \sigma^{-1}\bal) h_2(\xx/\yy \dv \sigma\bal) - h_3(\xx/\yy \dv \sigma^{-1}\bal)
\allowdisplaybreaks
\\ & = s_{21}(\xx/\yy\dv\bal) + (\alpha_1 - \alpha_0) s_2(\xx/\yy\dv\bal)
\\ & \hspace{20pt} + (\alpha_1 - \alpha_0) s_{11}(\xx/\yy\dv\bal) + (\alpha_1 - \alpha_0)^2 s_1(\xx/\yy\dv\bal)
\allowdisplaybreaks
\\ & = s_{21}(\xx/\yy) + \alpha_1 s_2(\xx/\yy) - \alpha_0 s_{11}(\xx/\yy) - \alpha_0 \alpha_1 s_1(\xx/\yy).
\end{align*}
Next, we restrict to one pair of variables, where the above results in
\begin{align*}
\ytableausetup{boxsize=1em}%
s_{22/1}(\xx_1/\yy_1 \dv \bal) & =
\bigl( (x_1 - \alpha_1) + (y_1 + \alpha_1) \bigr)
\bigl( (x_1 - \alpha_{-1})(x_1 - \alpha_0) + (y_1 + \alpha_{-1})(x_1 - \alpha_0) \bigr)
\\ & \hspace{20pt} - \bigl( (x_1 - \alpha_{-1})(x_1 - \alpha_0)(x_1-\alpha_1) + (y_1 + \alpha_{-1})(x_1 - \alpha_0)(x_1 - \alpha_1\bigr)
\\ & = (x + y)(x - \alpha_0)(y + \alpha_1).
\end{align*}
On the other hand, the $\mathbb{A}$-tableau formula~\eqref{eq:A_tableau_formula} yields
\[
\ytableaushort{{\none}{1'},{1'}1} \quad (y_1 + \alpha_{-1})(x_1 - \alpha_0)(y_1 + \alpha_1),
\qquad
\ytableaushort{{\none}{1'},11} \quad (x_1 - \alpha_{-1})(x_1 - \alpha_0)(y_1 + \alpha_1),
\]
where it is easy to see the functions are equal.
\end{example}

When we specialize to a single set of variables by $\yy = -\bal$ (\textit{i.e.}, $y_i = -\alpha_i$) and then take a finite set of variables $\overline{\xx}_n = (x_1, \dotsc, x_n, \alpha_{n+1}, \cdots)$, we have a bialternant formula for the double Schur functions~\cite{Macdonald92,MiyauraMukaihiraGeneralized,Molev09} 
\[
s_{\lambda}(\overline{\xx}_n \dv \bal) = \frac{A_{\lambda+\delta}}{A_{\delta}},
\qquad\qquad
A_{\mu} = \det \bigl[ (x_i | \sigma^{n-j} \bal)^{\mu_j} \bigr]_{i,j=1}^{\ell} = \det \bigl[ (x_i | \sigma^n \sigma \bal)^{\mu_j} \bigr]_{i,j=1}^{\ell},
\]
where $\delta = (\ell-1, \dotsc, 2, 1, 0)$ is the staircase partition.

Recall the involution $\omega$ that sends $h_k \mapsto e_k$ for all $k \in \ZZ_{\geqslant 0}$.
By Proposition~\ref{prop:eh_expansions}, we can immediately recover~\cite[Cor.~5.14]{BHS} when $\bbe = 0$, and note that this is a completely new proof.

\begin{proposition}
\label{prop:omega}
We have
\[
\omega h_k(\xx/\yy\dv\bal) = e_k(\xx/\yy\dv{-\iota \bal}).
\]
\end{proposition}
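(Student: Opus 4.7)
The plan is to reduce the statement to a straightforward reindexing of the expansion formulas in Proposition~\ref{prop:eh_expansions}. By definition, the classical involution $\omega$ acts as the identity on the parameters $\bal$ (they live in the coefficient ring $\ZZ[\bal]$) and sends $h_m(\xx/\yy) \mapsto e_m(\xx/\yy)$. So applying the first identity of Proposition~\ref{prop:eh_expansions}, I obtain
\[
\omega h_k(\xx/\yy\dv\bal) = \sum_{m=1}^k e_{k-m}(-\bal_{(0,k)}) \, e_m(\xx/\yy).
\]

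Next, I would expand the target $e_k(\xx/\yy \dv -\iota\bal)$ using the second identity of Proposition~\ref{prop:eh_expansions} applied with the parameter set $\bal' := -\iota\bal$, which gives
\[
e_k(\xx/\yy \dv -\iota\bal) = \sum_{m=1}^k e_{k-m}(\bal'_{(-k+1,1)}) \, e_m(\xx/\yy).
\]
Since $\bal'_i = -\alpha_{1-i}$, the tuple $\bal'_{(-k+1,1)} = (\bal'_{-k+2},\dotsc,\bal'_0) = (-\alpha_{k-1},\dotsc,-\alpha_1)$ agrees with $-\bal_{(0,k)}$ as a multiset. Hence $e_{k-m}(\bal'_{(-k+1,1)}) = e_{k-m}(-\bal_{(0,k)})$, and comparing the two expansions gives the identity.

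The only thing to be careful about is the indexing conventions. The shorthand $e_j(-\bal_{(a,b)}) = e_j(-\alpha_{a+1},\dotsc,-\alpha_{b-1})$ combined with the action $\iota\alpha_i = \alpha_{1-i}$ flips the range of indices, and the extra sign in $-\iota\bal$ precisely compensates the sign difference between the $h$-expansion (where we see $-\bal_{(0,k)}$) and the $e$-expansion (where we see $+\bal'_{(-k+1,1)}$). I do not anticipate any obstacle beyond carefully tracking this index/sign matching; no new analytic or combinatorial input is needed since everything follows term by term from Proposition~\ref{prop:eh_expansions}.
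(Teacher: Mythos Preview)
Your proof is correct and follows exactly the approach the paper takes: the paper simply states that the identity follows immediately from Proposition~\ref{prop:eh_expansions}, and you have carried out precisely that computation, applying $\omega$ to the first expansion and matching it against the second expansion with parameters $-\iota\bal$. The index and sign bookkeeping you describe is exactly the content of the paper's one-line proof.
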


Note that $\iota_- \omega$, where $\iota_-$ applies $-\iota$ to the $\bal$ parameters, yields the involution from Molev~\cite{MolevFactorialSupersymmetric,Molev09}.

Next, we use Proposition~\ref{prop:shifted_action} to describe how the $\bal$ shift operator $\sigma$ acts on the double elementary and homogeneous symmetric functions.
Note that the right hand sides of both generating series~\eqref{eq:double_gf} do \emph{not} depend on $\bal$.
Thus, we can compute the following.

\begin{proposition}[{\cite[Eq.~(8)]{Fun12},\cite[Lemma~2.4]{Molev09II}}]
\label{prop:shift_eh}
We have
\begin{subequations}
\begin{align}
e_k(\xx/\yy\dv\sigma\bal) & = e_k(\xx/\yy\dv\bal) + (\alpha_1 - \alpha_{2-k}) e_{k-1}(\xx/\yy\dv\bal),
\\
h_k(\xx/\yy\dv\sigma^{-1}\bal) & = h_k(\xx/\yy\dv\bal) + (\alpha_{k-1} - \alpha_0) h_{k-1}(\xx/\yy\dv\bal).
\end{align}
\end{subequations}
\end{proposition}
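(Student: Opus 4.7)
The plan is to exploit the $\bal$-independence of the right-hand sides of the generating series~\eqref{eq:double_gf} together with the basis-change formulas of Proposition~\ref{prop:shifted_action}: I prove the elementary identity directly and deduce the homogeneous one via the involution $\omega$ of Proposition~\ref{prop:omega}. For the elementary case, I equate the two expansions of the $\bal$-independent product $\prod_i(1-x_iz)/(1+y_iz)$ from~\eqref{eq:double_e_gf}, one in the basis $\{(z^{-1}|\bal)^{-k}\}_{k \geqslant 0}$ and the other in $\{(z^{-1}|\sigma\bal)^{-k}\}_{k \geqslant 0}$. The first identity of Proposition~\ref{prop:shifted_action}, with $\bal \mapsto \sigma\bal$ and $k \mapsto -k$, furnishes the two-term relation
\[
(z^{-1}|\bal)^{-k} = (z^{-1}|\sigma\bal)^{-k} - (\alpha_1 - \alpha_{1-k})(z^{-1}|\sigma\bal)^{-k-1}.
\]
Substituting this on the $\bal$-side and reindexing $k \mapsto k-1$ in the resulting second sum regroups the left-hand expansion as
\[
\sum_{k \geqslant 0}(-1)^k\bigl[ e_k(\xx/\yy\dv\bal) + (\alpha_1 - \alpha_{2-k})\, e_{k-1}(\xx/\yy\dv\bal) \bigr](z^{-1}|\sigma\bal)^{-k}.
\]
Since $(z^{-1}|\sigma\bal)^{-k}$ has $z$-valuation exactly $k$, this family is triangular in $\CC[\bal]\FPS{z}$, so comparing coefficients with $\sum_k (-1)^k e_k(\xx/\yy\dv\sigma\bal)(z^{-1}|\sigma\bal)^{-k}$ forces the first identity.

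The homogeneous identity then follows by applying $\omega$ to the elementary identity after replacing $\bal$ by $-\iota\bal$. Proposition~\ref{prop:omega} together with $\omega^2 = \mathrm{id}$ yields $\omega\, e_k(\xx/\yy\dv {-\iota\bal}) = h_k(\xx/\yy\dv\bal)$, and the direct check $\iota\sigma\iota = \sigma^{-1}$ shows that $\omega\, e_k(\xx/\yy\dv \sigma({-\iota\bal})) = h_k(\xx/\yy\dv \sigma^{-1}\bal)$; meanwhile the transformed coefficient $({-\iota\bal})_1 - ({-\iota\bal})_{2-k} = -\alpha_0 + \alpha_{k-1}$ simplifies to the required $\alpha_{k-1} - \alpha_0$. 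The main obstacle throughout is careful index bookkeeping across the shifts $\sigma^{\pm 1}$ and $\iota$: in particular, one must check that the $k \mapsto k-1$ reindexing converts $\alpha_1 - \alpha_{1-k}$ into $\alpha_1 - \alpha_{2-k}$ and that the sign factor $(-1)^{k-1}$ absorbs the minus sign coming out of Proposition~\ref{prop:shifted_action}.
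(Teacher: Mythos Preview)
Your proof is correct. For the elementary identity, your argument is essentially identical to the paper's: both exploit the $\bal$-independence of the generating series~\eqref{eq:double_e_gf}, use Proposition~\ref{prop:shifted_action} to rewrite $(z^{-1}|\bal)^{-k}$ in the basis $\{(z^{-1}|\sigma\bal)^{-k}\}$, and then equate coefficients.

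For the homogeneous identity the two approaches diverge slightly. The paper simply repeats the generating-series argument directly, this time with~\eqref{eq:double_h_gf} and the second formula of Proposition~\ref{prop:shifted_action} (shifted by $\sigma^{-1}$), and again equates coefficients of $\frac{1}{(z^{-1}|\sigma^{-1}\bal)^k}$. You instead deduce it from the elementary case via $\omega$ (Proposition~\ref{prop:omega}) and the automorphism relation $\iota\sigma\iota=\sigma^{-1}$. Your route avoids duplicating the computation and nicely highlights the $\omega$-duality between the two identities, at the price of tracking the $-\iota$ twist through the indices; the paper's route is more uniform but performs essentially the same calculation twice. Both are equally valid and of comparable length.
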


\begin{proof}
To show the formula for $h_k(\xx/\yy\dv\sigma\bal)$, we first note that
\begin{align*}
e^{\xi(\xx/\yy;z)} & = \sum_{k=0}^{\infty} \frac{h_k(\xx/\yy\dv\sigma^{-1}\bal)}{(z^{-1}|\sigma^{-1}\bal)^k}
= \sum_{k=0}^{\infty} \frac{h_k(\xx/\yy\dv\bal)}{(z^{-1}|\bal)^k}
\\ & = \sum_{k=0}^{\infty} {h_k(\xx/\yy\dv\bal)} \left( \frac{1}{(z^{-1}|\sigma^{-1}\bal)^k} + \frac{\alpha_k - \alpha_0}{(z^{-1}|\sigma^{-1}\bal)^{k+1}} \right)
\end{align*}
by~\eqref{eq:double_h_gf}. 
The claim follows from equating the coefficients of $\frac{1}{(z^{-1}|\sigma^{-1}\bal)^k}$.
The proof for $e_k(\xx/\yy\dv\sigma\bal)$ is similar, yielding
\[
\sum_{k=0}^{\infty} (-1)^k e_k(\xx/\yy\dv\bal) (w^{-1}|\sigma\bal)^{-k}
= \sum_{k=0}^{\infty} (-1)^k e_k(\xx/\yy\dv\bal) \left( (w^{-1}|\sigma\bal)^{-k} + (\alpha_{1-k} - \alpha_1) (w^{-1}|\sigma\bal)^{-k-1} \right)
\]
and then equating the coefficients of $(w^{-1}|\sigma\bal)^{-k}$ to obtain the result.
\end{proof}

Note that we can compute $e_k(\xx/\yy\dv\sigma\bal)$ and $h_k(\xx/\yy\dv\sigma^{-1}\bal)$ from Proposition~\ref{prop:shift_eh}, yielding the recursion relations and summation formulas
\begin{align*}
e_k(\xx/\yy\dv\sigma^{-1}\bal) & = e_k(\xx/\yy\dv\bal) - (\alpha_{k+2} - \alpha_1) e_{k-1}(\xx/\yy\dv\sigma\bal) = \sum_{m=0}^{k-1} \frac{e_{k-m}(\xx/\yy\dv\bal)}{(\alpha_0|\sigma^{m-k}\bal)^m},
\\
h_k(\xx/\yy\dv\sigma\bal) & = h_k(\xx/\yy\dv\bal) - (\alpha_k - \alpha_1) h_{k-1}(\xx/\yy\dv\sigma\bal) = \sum_{m=0}^{k-1} (\alpha_1|\sigma^{k-m}\bal)^m h_{k-m}(\xx/\yy\dv\bal).
\end{align*}
Note that these are finite sums since
\[
h_0(\xx/\yy\dv\sigma\bal) = h_0(\xx/\yy\dv\bal) = e_0(\xx/\yy\dv\sigma^{-1}\bal) = e_0(\xx/\yy\dv\bal) = 1.
\]

\section{The Boson-Fermion Correspondence} 
\label{sec:fermion_fields}

The \defn{Clifford algebra} $\mcC$ is generated by operators $\psi_i$ for $i \in \ZZ$ (resp.\ $\psi_j^*$ for $j \in \ZZ$) acting on $\fermionfock$ that adds (resp.\ deletes) a particle at site $i$ (resp. $j$) and acts by $0$ if it cannot.
These satisfy the canonical anticommutation relations
\[
[\psi_i, \psi_j]_+ = [\psi_i^*, \psi_j^*]_+ = 0,
\qquad\qquad
[\psi_i, \psi_j^*]_+ = \delta_{ij},
\]
where $[a, b]_+ = ab + ba$.
We define the \defn{deformed fermion fields} by
\[
\psi(z|\bal) := \sum_{i \in \ZZ} \frac{1}{(z^{-1}|\bal)^i} \psi_i = \sum_{i \in \ZZ} (z^{-1}|\sigma^i \bal)^{-i} \psi_i,
\qquad\quad
\psi^*(w|\bal) := \sum_{j \in \ZZ} w^{-1}(w^{-1}|\bal)^{j-1} \psi_j^*.
\]
There is also a normal ordering on the Clifford algebra defined by
\begin{subequations}
\label{eq:normal_ordering}
\begin{align}
\normord{\psi(z|\bal) \psi^*(w|\bal)} & = \psi(z|\bal) \psi^*(w|\bal) - \bra{\varnothing} \psi(z|\bal) \psi^*(w|\bal) \ket{\varnothing},
\\
\normord{\psi^*(w|\bal) \psi(z|\bal)} & = \psi^*(w|\bal) \psi(z|\bal) - \bra{\varnothing} \psi^*(w|\bal) \psi(z|\bal) \ket{\varnothing}.
\end{align}
\end{subequations}
Next, we note that $\psi(z|\bal)$ and $\psi^*(w|\bal)$ makes sense when expanding each term as a formal Laurent series in $z$, accepting formal infinite sums of basis elements in $\mcC$.

Now we provide an algebraic proof of the key result~\cite[Prop.~4.1]{BHS}, which was used in all of the subsequent proofs.

\begin{theorem}
\label{thm:main_theorem}
We have
\begin{align*}
\bra{\varnothing} \psi(z|\bal) \psi^*(w|\bal) \ket{\varnothing} & = \frac{z}{z-w},
&
\bra{\varnothing} \psi^*(w|\bal) \psi(z|\bal) \ket{\varnothing} & = \frac{z}{w-z},
\end{align*}
where this is an equality of functions $\CC^2 \to \CC$ if $\abs{z} > \abs{w}$ or $\abs{z} < \abs{w}$, respectively.
\end{theorem}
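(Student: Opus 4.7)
My plan is to reduce each vacuum expectation to an explicit series in $z$ and $w$, and then evaluate that series via a telescoping identity.

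First, I use $[\psi_i, \psi_j^*]_+ = \delta_{ij}$ together with the standard vanishings $\psi_i\ket{\varnothing} = 0$ for $i \leqslant 0$ and $\psi_j^*\ket{\varnothing} = 0$ for $j > 0$ to compute the single-mode expectations $\bra{\varnothing}\psi_i\psi_j^*\ket{\varnothing} = \delta_{ij}$ when $i \leqslant 0$ (and zero otherwise), and analogously $\bra{\varnothing}\psi_j^*\psi_i\ket{\varnothing} = \delta_{ij}$ when $i > 0$. Substituting the Laurent expansions of $\psi(z|\bal)$ and $\psi^*(w|\bal)$ and reindexing $i = -k$ with $k \geqslant 0$ collapses the first expectation to
\[
\bra{\varnothing}\psi(z|\bal)\psi^*(w|\bal)\ket{\varnothing} = w^{-1}\sum_{k\geqslant 0} T_k, \qquad T_k := \frac{(w^{-1}|\bal)^{-k-1}}{(z^{-1}|\bal)^{-k}} = \frac{\prod_{j=0}^{k-1}(z^{-1}-\alpha_{-j})}{\prod_{j=0}^{k}(w^{-1}-\alpha_{-j})},
\]
where the second equality uses the recursion $(z^{-1}|\bal)^{-k} = (z^{-1}-\alpha_{-k})(z^{-1}|\bal)^{-k-1}$ and its mirror for the $w$ product. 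The second expectation admits an analogous reduction after reindexing $i = k+1$.

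The heart of the argument is a telescoping trick. Define $V_k := \prod_{j=0}^{k-1}\frac{z^{-1}-\alpha_{-j}}{w^{-1}-\alpha_{-j}}$, with $V_0 = 1$. A direct computation gives
\[
V_{k+1} - V_k = V_k\cdot\frac{z^{-1}-w^{-1}}{w^{-1}-\alpha_{-k}} = (z^{-1}-w^{-1})\,T_k,
\]
so the partial sums telescope to $\sum_{k=0}^{N} T_k = (V_{N+1}-1)/(z^{-1}-w^{-1})$. Rewriting $V_{N+1} = (w/z)^{N+1}\prod_{j=0}^{N}(1-\alpha_{-j}z)/(1-\alpha_{-j}w)$ and invoking $\abs{z} > \abs{w}$ together with the boundedness of $\{\abs{\alpha_i}\}$ from Remark~\ref{rem:formal_contours}, the factor $V_{N+1}$ tends to zero, whence $\sum_k T_k = 1/(w^{-1}-z^{-1}) = zw/(z-w)$; multiplying by $w^{-1}$ recovers $z/(z-w)$. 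The second identity follows by the mirror-image telescoping with $\widetilde{V}_k := \prod_{j=1}^{k}\frac{w^{-1}-\alpha_j}{z^{-1}-\alpha_j}$, where instead the factor $(z/w)^{N+1}$ vanishes in the regime $\abs{z} < \abs{w}$ and yields $z/(w-z)$.

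The main obstacle I expect is the convergence justification at the limit $N\to\infty$, which amounts to controlling the infinite product $\prod(1-\alpha_{-j}z)/(1-\alpha_{-j}w)$; this is precisely where the sup bound on $\{\abs{\alpha_i}\}$ from Remark~\ref{rem:formal_contours} is used. Purely formally, however, the same telescoping runs in $\ZZ[\bal]\FLS{w/z}$ (respectively $\ZZ[\bal]\FLS{z/w}$) and reproduces the geometric expansion of $z/(z-w)$ (respectively $z/(w-z)$) with no analytic input, which is the form actually needed in the rest of the paper.
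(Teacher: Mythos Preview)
Your telescoping argument is correct and takes a genuinely different route from the paper's proof. Both arguments begin from the same reduction
\[
\bra{\varnothing}\psi(z|\bal)\psi^*(w|\bal)\ket{\varnothing} = w^{-1}\sum_{k\geq 0}\frac{(w^{-1}|\bal)^{-k-1}}{(z^{-1}|\bal)^{-k}},
\]
but diverge thereafter. The paper expands each summand via Proposition~\ref{prop:shifted_cob} into products of elementary and complete symmetric functions in the $\bal$, reorganizes into a triple sum over powers of $z$ and $w$, and then invokes the symmetric-function identity of Corollary~\ref{cor:product_cob_identity} to collapse the inner sum to a Kronecker delta, recovering the geometric series $\sum_b(w/z)^b$. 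Your proof bypasses that machinery entirely: the identity $V_{k+1}-V_k=(z^{-1}-w^{-1})T_k$ telescopes the partial sums directly, and $V_{N+1}\to 0$ in the $w$-adic topology (it has $w$-valuation $N+1$) finishes the job. Your closing observation that this runs purely in $\ZZ[\bal][z^{-1}]\FLS{w}$ (respectively $\ZZ[\bal][w^{-1}]\FLS{z}$) is the correct formal statement and is all that the rest of the paper needs.

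One caveat on the analytic paragraph: the pointwise claim $V_{N+1}\to 0$ for all $\abs{z}>\abs{w}$ is not quite justified by the sup bound on $\abs{\alpha_i}$ alone, since the product $\prod_j(1-\alpha_{-j}z)/(1-\alpha_{-j}w)$ need not stay bounded; one only gets convergence when $\abs{w}^{-1}-\abs{z}^{-1}$ exceeds roughly $2\sup_i\abs{\alpha_i}$. As you already note, this is immaterial for the algebraic identity. What your approach buys is brevity and independence from Corollary~\ref{cor:product_cob_identity}; what the paper's approach buys is an explicit tie between the vacuum expectation and the change-of-basis formulas for shifted powers, which fits the surrounding narrative in Section~\ref{sec:prelims}.
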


\begin{proof}
We now prove the first formula.
We can consider the expansion in the ring $\mcC[\bal][z^{-1}]\FPS{w}$ as $\psi_i \ket{\varnothing} = 0$ for all $i > 0$.
Using the fact that $\bra{\varnothing} \psi_i \psi_j^* \ket{\varnothing} = \delta_{ij}$, we have
\begin{align*}
\bra{\varnothing} \psi(z|\bal) \psi^*(w|\bal) \ket{\varnothing} & = w^{-1} \sum_{a=0}^{\infty} \frac{(w^{-1}|\bal)^{-a-1}}{(z^{-1}|\bal)^{-a}}
= \sum_{a=0}^{\infty} \frac{w^a}{z^a} (1 - \alpha_a w)^{-1} \prod_{i=0}^{a-1} \frac{1 - \alpha_i z}{1 - \alpha_i w}
\\ & = \sum_{a=0}^{\infty} \frac{w^a}{z^a} \sum_{k=0}^a e_k(-\bal_{(-1,a)}) z^k \sum_{m=0}^{\infty} h_m(\bal_{[0,a]}) w^m
\\ & = \sum_{b=0}^{\infty} w^b \sum_{q=0}^b z^{-q} \sum_{r=q}^b h_{b-r}(\bal_{[0,r]}) e_{r-q}(-\bal_{(-1,r)}),
\end{align*}
where in the last step, $b := a+m,\; q := a-k$, and $r := a$.
Finally, apply Corollary~\ref{cor:product_cob_identity} (replacing $r \mapsto b - r$ and taking $j = b - r$ to get it in the same form) to obtain
\[
\bra{\varnothing} \psi(z|\bal) \psi^*(w|\bal) \ket{\varnothing} = \sum_{b=0}^{\infty} \left( \frac{w}{z} \right)^b = \frac{1}{1 - w/z} = \frac{z}{z - w}.
\]

The proof for the second formula is similar.
\end{proof}

\begin{remark}
\label{rem:normal_ordering}
As noted just before~\cite[Prop.~6.6]{BHS}, the normal ordering~\eqref{eq:normal_ordering} roughly corresponds to taking the central extension $\mfa^{\pm}$ of $\overline{\mfa}^{\pm}$.
In more detail, consider the identification $E_{ij} \leftrightarrow \psi_i \psi_j^*$.
The normal ordering can be considered analogous to moving from the representation $r$ to $\widehat{r}$ as $\normord{E_{ij}} = E_{ij}$ unless $i = j \leqslant 0$, in which case $\normord{E_{ii}} = -\psi_i^* \psi_i = \psi_i \psi_i^* - 1 = E_{ii} - 1$.
This is not precise, which can be seen by comparing $\normord{[E_{ij}, E_{ab}]} = 0$ (as everything under the normal ordering (skew)commutes) and $[\normord{E_{ij}}, \normord{E_{ab}}] = \delta_{ja} E_{ib} - \delta_{ib} E_{aj}$.
\end{remark}

There is a deformed version of the Clifford algebra shift operator $\Sigma_{(\bal)} := \Sigma_{(\bal,0)}$ from~\cite[Sec.~4.2]{BHS}, which is given by the adjoint action essentially acting as a deformed discrete difference operator on $\psi_i$ and $\psi_j^*$:
\[
\Sigma_{(\bal)} \psi_i \Sigma_{(\bal)}^{-1} = \psi_{i+1} + \alpha_i \psi_i,
\qquad\qquad
\Sigma_{(\bal)}^{-1} \psi_i^* \Sigma_{(\bal)} = \psi_{i-1}^* + \alpha_i \psi_i^*.
\]
These deformed discrete difference operators were used in~\cite[Eq.~(4.6)]{MS20} in the context of refined dual Grothendieck polynomials.
It would be interesting to see if there is an explicit connection between this and the free fermionic construction in~\cite{IMS24}; \textit{cf.}~\cite{Assiotis23,IMS23}.

Next, under the boson-fermion correspondence from Theorem~\ref{thm:boson_fermion}, the deformed current operators act on the space of double supersymmetric functions via
\[
J_k^{(\alpha)}\cdot f :=
\begin{cases} p_kf, & \text{if } k<0, \\
0 & \text{if } k = 0, \\
\frac{\partial f}{\partial p_k}, & \text{if } k>0.
\end{cases}
\]
Subsequently, the (deformed) Murnaghan--Nakayama rule~\cite[Thm.~5.23]{BHS} (and its ``dual'') always yields \emph{finite} sums when $\bbe = 0$.
As a consequence, we see that every symmetric function can be written as a \emph{finite} sum of double Schur functions.
Following~\cite{BHS}, define $\pp = (p_1, p_2, \ldots)$ as the generators of the polynomial ring $\ZZ[\bal][\pp]$, which we will generally consider under the specialization $p_k = p_k(\xx/\yy)$.
As such, we write $\pp$ as the inputs for the corresponding symmetric functions instead of $\xx/\yy$.
Likewise, we can write the supersymmetric functions in Section~\ref{sec:double_funcs} in terms of $\pp$ instead of $\xx/\yy$.

\begin{example}
\label{ex:jkexample_beta_zero}
Let $\lambda = (8,3,1)$, and consider the ket $\ket{\lambda} := \ket{\lambda}_0 = v_8\wedge v_2\wedge v_{-1}\wedge v_{-3}\wedge v_{-4}\wedge\cdots$. This vector can be represented as the following particle diagram. Particles (black circles) are placed at positions $i$ such that $v_i$ appears in $|\lambda\rangle$, while holes (white circles) are placed at the remaining positions.
\[
\scalebox{0.8}{\begin{tikzpicture}
  \foreach \i in {-4,-3,...,12}
    \draw[fill=white] (\i,0) circle (.3);
  \foreach \i in {12,11,9,6,0}
    \draw[fill=black] (\i,0) circle (.3);
  \foreach \i in {-4,-3,...,12}
    \node at (8-\i,-.6) {$\i$};
  \node at (-5,0) {$\cdots$};
  \node at (13,0) {$\cdots$};
\end{tikzpicture}}
\]

First consider the action of $J_3^{(\bal)}$ on $\ket{\lambda}$.
This action corresponds to moving a single particle at least three spaces to the right.
If the particle started in spot $j$ and ended in spot $i$, the resulting coefficient is $A_{ij}^3$ times a sign corresponding to how many particles lie in between $i$ and $j$.
This results in the following diagrams, with coefficients displayed to their right, and the action on partitions to the right.
\[\scalebox{0.5}{\begin{tikzpicture}
  \draw[fill=black] (12,0) circle (.3);
  \draw[fill=black] (11,0) circle (.3);
  \draw[fill=white] (10,0) circle (.3);
  \draw[fill=black] (9,0) circle (.3);
  \draw[fill=white] (8,0) circle (.3);
  \draw[fill=white] (7,0) circle (.3);
  \draw[fill=black] (6,0) circle (.3);
  \draw[fill=white] (5,0) circle (.3);
  \draw[fill=white] (4,0) circle (.3);
  \draw[fill=black] (3,0) circle (.3);
  \draw[fill=white] (2,0) circle (.3);
  \draw[fill=white] (1,0) circle (.3);
  \draw[fill=black!30]  (0,0) circle (.3);
  \draw[fill=white] (-1,0) circle (.3);
  \draw[fill=white] (-2,0) circle (.3);
  \draw[fill=white] (-3,0) circle (.3);
  \draw[fill=white] (-4,0) circle (.3);
  \foreach \i in {-4,-3,...,12}
    \node at (8-\i,-.6) {$\i$};
  \node at (-5,0) {\ldots};
  \node at (13,0) {\ldots};
  \draw \truearrow (0,0.5) to [out=25,in=155] (3,0.5);
  \node at (-7,0) {{\LARGE $A_{5,8}^3$}};
  
  \draw (15,0.5)--(15,-1);
  \draw (15.5,0.5)--(15.5,-1);
  \draw (16,0.5)--(16,-0.5);
  \draw (16.5,0.5)--(16.5,-0.5);
  \draw (17,0.5)--(17,0);
  \draw (17.5,0.5)--(17.5,0);
  \draw (18,0.5)--(18,0);
  \draw (18.5,0.5)--(18.5,0);
  \draw (19,0.5)--(19,0);
  \draw (15,0.5)--(19,0.5);
  \draw (15,0)--(19,0);
  \draw (15,-0.5)--(16.5,-0.5);
  \draw (15,-1)--(15.5,-1);
  \fill[red, opacity=0.5] (17.5,0) rectangle (19,0.5);
\end{tikzpicture}}\]
\[\scalebox{0.5}{\begin{tikzpicture}
  \draw[fill=black] (12,0) circle (.3);
  \draw[fill=black] (11,0) circle (.3);
  \draw[fill=white] (10,0) circle (.3);
  \draw[fill=black] (9,0) circle (.3);
  \draw[fill=white] (8,0) circle (.3);
  \draw[fill=white] (7,0) circle (.3);
  \draw[fill=black] (6,0) circle (.3);
  \draw[fill=white] (5,0) circle (.3);
  \draw[fill=black] (4,0) circle (.3);
  \draw[fill=white] (3,0) circle (.3);
  \draw[fill=white] (2,0) circle (.3);
  \draw[fill=white] (1,0) circle (.3);
  \draw[fill=black!30] (0,0) circle (.3);
  \draw[fill=white] (-1,0) circle (.3);
  \draw[fill=white] (-2,0) circle (.3);
  \draw[fill=white] (-3,0) circle (.3);
  \draw[fill=white] (-4,0) circle (.3);
  \foreach \i in {-4,-3,...,12}
    \node at (8-\i,-.6) {$\i$};
  \node at (-5,0) {\ldots};
  \node at (13,0) {\ldots};
  \draw \truearrow (0,0.6) to [out=25,in=155] (4.0,0.6);
  \node at (-7,0) {{\LARGE $A_{4,8}^3$}};
  
  \draw (15,0.5)--(15,-1);
  \draw (15.5,0.5)--(15.5,-1);
  \draw (16,0.5)--(16,-0.5);
  \draw (16.5,0.5)--(16.5,-0.5);
  \draw (17,0.5)--(17,0);
  \draw (17.5,0.5)--(17.5,0);
  \draw (18,0.5)--(18,0);
  \draw (18.5,0.5)--(18.5,0);
  \draw (19,0.5)--(19,0);
  \draw (15,0.5)--(19,0.5);
  \draw (15,0)--(19,0);
  \draw (15,-0.5)--(16.5,-0.5);
  \draw (15,-1)--(15.5,-1);
  \fill[red, opacity=0.5] (17,0) rectangle (19,0.5);
\end{tikzpicture}}\]
\[\scalebox{0.5}{\begin{tikzpicture}
  \draw[fill=black] (12,0) circle (.3);
  \draw[fill=black] (11,0) circle (.3);
  \draw[fill=white] (10,0) circle (.3);
  \draw[fill=black] (9,0) circle (.3);
  \draw[fill=white] (8,0) circle (.3);
  \draw[fill=white] (7,0) circle (.3);
  \draw[fill=black] (6,0) circle (.3);
  \draw[fill=black] (5,0) circle (.3);
  \draw[fill=white] (4,0) circle (.3);
  \draw[fill=white] (3,0) circle (.3);
  \draw[fill=white] (2,0) circle (.3);
  \draw[fill=white] (1,0) circle (.3);
  \draw[fill=black!30] (0,0) circle (.3);
  \draw[fill=white] (-1,0) circle (.3);
  \draw[fill=white] (-2,0) circle (.3);
  \draw[fill=white] (-3,0) circle (.3);
  \draw[fill=white] (-4,0) circle (.3);
  \foreach \i in {-4,-3,...,12}
    \node at (8-\i,-.6) {$\i$};
  \node at (-5,0) {\ldots};
  \node at (13,0) {\ldots};
  \draw \truearrow (0,0.5) to [out=25,in=155] (5,0.5);
  \node at (-7,0) {{\LARGE $A_{3,8}^3$}};
  
  \draw (15,0.5)--(15,-1);
  \draw (15.5,0.5)--(15.5,-1);
  \draw (16,0.5)--(16,-0.5);
  \draw (16.5,0.5)--(16.5,-0.5);
  \draw (17,0.5)--(17,0);
  \draw (17.5,0.5)--(17.5,0);
  \draw (18,0.5)--(18,0);
  \draw (18.5,0.5)--(18.5,0);
  \draw (19,0.5)--(19,0);
  \draw (15,0.5)--(19,0.5);
  \draw (15,0)--(19,0);
  \draw (15,-0.5)--(16.5,-0.5);
  \draw (15,-1)--(15.5,-1);
  \fill[red, opacity=0.5] (16.5,0) rectangle (19,0.5);
\end{tikzpicture}}\]
\[\scalebox{0.5}{\begin{tikzpicture}
  \draw[fill=black] (12,0) circle (.3);
  \draw[fill=black] (11,0) circle (.3);
  \draw[fill=white] (10,0) circle (.3);
  \draw[fill=black] (9,0) circle (.3);
  \draw[fill=white] (8,0) circle (.3);
  \draw[fill=black] (7,0) circle (.3);
  \draw[fill=black] (6,0) circle (.3);
  \draw[fill=white] (5,0) circle (.3);
  \draw[fill=white] (4,0) circle (.3);
  \draw[fill=white] (3,0) circle (.3);
  \draw[fill=white] (2,0) circle (.3);
  \draw[fill=white] (1,0) circle (.3);
  \draw[fill=black!30] (0,0) circle (.3);
  \draw[fill=white] (-1,0) circle (.3);
  \draw[fill=white] (-2,0) circle (.3);
  \draw[fill=white] (-3,0) circle (.3);
  \draw[fill=white] (-4,0) circle (.3);
  \foreach \i in {-4,-3,...,12}
    \node at (8-\i,-.6) {$\i$};
  \node at (-5,0) {\ldots};
  \node at (13,0) {\ldots};
  \draw \truearrow (0,0.5) to [out=25,in=155] (7,0.5);
  \node at (-7,0) {{\LARGE $-A_{1,8}^3$}};
    
  \draw (15,0.5)--(15,-1);
  \draw (15.5,0.5)--(15.5,-1);
  \draw (16,0.5)--(16,-0.5);
  \draw (16.5,0.5)--(16.5,-0.5);
  \draw (17,0.5)--(17,0);
  \draw (17.5,0.5)--(17.5,0);
  \draw (18,0.5)--(18,0);
  \draw (18.5,0.5)--(18.5,0);
  \draw (19,0.5)--(19,0);
  \draw (15,0.5)--(19,0.5);
  \draw (15,0)--(19,0);
  \draw (15,-0.5)--(16.5,-0.5);
  \draw (15,-1)--(15.5,-1);
  \fill[red, opacity=0.5] (16,0) rectangle (19,0.5);
  \fill[red, opacity=0.5] (16,-0.5) rectangle (16.5,0);
\end{tikzpicture}}\]
\[\scalebox{0.5}{\begin{tikzpicture}
  \draw[fill=black] (12,0) circle (.3);
  \draw[fill=black] (11,0) circle (.3);
  \draw[fill=white] (10,0) circle (.3);
  \draw[fill=black] (9,0) circle (.3);
  \draw[fill=black] (8,0) circle (.3);
  \draw[fill=white] (7,0) circle (.3);
  \draw[fill=black] (6,0) circle (.3);
  \draw[fill=white] (5,0) circle (.3);
  \draw[fill=white] (4,0) circle (.3);
  \draw[fill=white] (3,0) circle (.3);
  \draw[fill=white] (2,0) circle (.3);
  \draw[fill=white] (1,0) circle (.3);
  \draw[fill=black!30] (0,0) circle (.3);
  \draw[fill=white] (-1,0) circle (.3);
  \draw[fill=white] (-2,0) circle (.3);
  \draw[fill=white] (-3,0) circle (.3);
  \draw[fill=white] (-4,0) circle (.3);
  \foreach \i in {-4,-3,...,12}
    \node at (8-\i,-.6) {$\i$};
  \node at (-5,0) {\ldots};
  \node at (13,0) {\ldots};
  \draw \truearrow (0,0.5) to [out=25,in=155] (8,0.5);
  \node at (-7,0) {{\LARGE $-A_{0,8}^3$}};
  
  \draw (15,0.5)--(15,-1);
  \draw (15.5,0.5)--(15.5,-1);
  \draw (16,0.5)--(16,-0.5);
  \draw (16.5,0.5)--(16.5,-0.5);
  \draw (17,0.5)--(17,0);
  \draw (17.5,0.5)--(17.5,0);
  \draw (18,0.5)--(18,0);
  \draw (18.5,0.5)--(18.5,0);
  \draw (19,0.5)--(19,0);
  \draw (15,0.5)--(19,0.5);
  \draw (15,0)--(19,0);
  \draw (15,-0.5)--(16.5,-0.5);
  \draw (15,-1)--(15.5,-1);
  \fill[red, opacity=0.5] (16,0) rectangle (19,0.5);
  \fill[red, opacity=0.5] (15.5,-0.5) rectangle (16.5,0);
\end{tikzpicture}}\]
\[\scalebox{0.5}{\begin{tikzpicture}
  \draw[fill=black] (12,0) circle (.3);
  \draw[fill=black] (11,0) circle (.3);
  \draw[fill=black] (10,0) circle (.3);
  \draw[fill=black] (9,0) circle (.3);
  \draw[fill=white] (8,0) circle (.3);
  \draw[fill=white] (7,0) circle (.3);
  \draw[fill=black] (6,0) circle (.3);
  \draw[fill=white] (5,0) circle (.3);
  \draw[fill=white] (4,0) circle (.3);
  \draw[fill=white] (3,0) circle (.3);
  \draw[fill=white] (2,0) circle (.3);
  \draw[fill=white] (1,0) circle (.3);
  \draw[fill=black!30] (0,0) circle (.3);
  \draw[fill=white] (-1,0) circle (.3);
  \draw[fill=white] (-2,0) circle (.3);
  \draw[fill=white] (-3,0) circle (.3);
  \draw[fill=white] (-4,0) circle (.3);
  \foreach \i in {-4,-3,...,12}
    \node at (8-\i,-.6) {$\i$};
  \node at (-5,0) {\ldots};
  \node at (13,0) {\ldots};
  \draw \truearrow (0,0.5) to [out=25,in=155] (10,0.5);
  \node at (-7,0) {{\LARGE $A_{-2,8}^3$}};
  
  \draw (15,0.5)--(15,-1);
  \draw (15.5,0.5)--(15.5,-1);
  \draw (16,0.5)--(16,-0.5);
  \draw (16.5,0.5)--(16.5,-0.5);
  \draw (17,0.5)--(17,0);
  \draw (17.5,0.5)--(17.5,0);
  \draw (18,0.5)--(18,0);
  \draw (18.5,0.5)--(18.5,0);
  \draw (19,0.5)--(19,0);
  \draw (15,0.5)--(19,0.5);
  \draw (15,0)--(19,0);
  \draw (15,-0.5)--(16.5,-0.5);
  \draw (15,-1)--(15.5,-1);
  \fill[red, opacity=0.5] (16,0) rectangle (19,0.5);
  \fill[red, opacity=0.5] (15,-0.5) rectangle (16.5,0);
  \fill[red, opacity=0.5] (15,-1) rectangle (15.5,-0.5);
\end{tikzpicture}}\]
\[\scalebox{0.5}{\begin{tikzpicture}
  \draw[fill=black] (12,0) circle (.3);
  \draw[fill=black] (11,0) circle (.3);
  \draw[fill=black] (10,0) circle (.3);
  \draw[fill=black] (9,0) circle (.3);
  \draw[fill=white] (8,0) circle (.3);
  \draw[fill=white] (7,0) circle (.3);
  \draw[fill=black!30] (6,0) circle (.3);
  \draw[fill=white] (5,0) circle (.3);
  \draw[fill=white] (4,0) circle (.3);
  \draw[fill=white] (3,0) circle (.3);
  \draw[fill=white] (2,0) circle (.3);
  \draw[fill=white] (1,0) circle (.3);
  \draw[fill=black] (0,0) circle (.3);
  \draw[fill=white] (-1,0) circle (.3);
  \draw[fill=white] (-2,0) circle (.3);
  \draw[fill=white] (-3,0) circle (.3);
  \draw[fill=white] (-4,0) circle (.3);
  \foreach \i in {-4,-3,...,12}
    \node at (8-\i,-.6) {$\i$};
  \node at (-5,0) {\ldots};
  \node at (13,0) {\ldots};
  \draw \truearrow (6,0.5) to [out=25,in=155] (10,0.5);
  \node at (-7,0) {{\LARGE $-A_{-2,2}^3$}};
    
  \draw (15,0.5)--(15,-1);
  \draw (15.5,0.5)--(15.5,-1);
  \draw (16,0.5)--(16,-0.5);
  \draw (16.5,0.5)--(16.5,-0.5);
  \draw (17,0.5)--(17,0);
  \draw (17.5,0.5)--(17.5,0);
  \draw (18,0.5)--(18,0);
  \draw (18.5,0.5)--(18.5,0);
  \draw (19,0.5)--(19,0);
  \draw (15,0.5)--(19,0.5);
  \draw (15,0)--(19,0);
  \draw (15,-0.5)--(16.5,-0.5);
  \draw (15,-1)--(15.5,-1);
  \fill[red, opacity=0.5] (15,-0.5) rectangle (16.5,0);
  \fill[red, opacity=0.5] (15,-1) rectangle (15.5,-0.5);
\end{tikzpicture}}\]
Therefore, we obtain
\begin{align*}
J_3^{(\bal)} \ket{(8,3,1)} &= \sum_{i,j} A_{ij}^3 E_{i,j} (v_8\wedge v_2\wedge v_{-1}\wedge v_{-3}\wedge v_{-4}\wedge\cdots)
\\ &= A^3_{5,8} \ket{(5,3,1)} + A^3_{4,8}\ket{(4,3,1)} + A^3_{3,8}\ket{(3,3,1)} - A^3_{1,8}\ket{(2,2,1)}
\\ & \hspace{20pt} - A^3_{0,8}\ket{(2,1,1)} + A^3_{-2,8}\ket{(2)} - A^3_{-2,2}\ket{(8)}.
\end{align*}

The partitions $(5,3,1)$, $(4,3,1)$, $(3,3,1)$, $(2,2,1)$, $(2,1,1)$, $(2)$, and $(8)$ are precisely those $\mu$ such that $\lambda/\mu$ is a ribbon of size at least 3.
In addition, the sign on each term is positive when the height of the ribbon is odd, and negative when it is even.

Next, we consider the action of $J_{-3}^{(\bal)}$ on $\ket{\lambda}$.
We start with the coefficients for $\ket{\mu}$ with $\mu\ne\lambda$.
The action of $J_{-3}^{(\bal)}$ corresponds to moving a single particle at most three spaces to the left.
If the particle started in spot $i$ and ended in spot $j$, the resulting coefficient is $A_{j,i}^3$ times a sign corresponding to how many particles lie in between $i$ and $j$.
The resulting diagrams are as follows.
\[\scalebox{0.5}{\begin{tikzpicture}
  \draw[fill=black] (12,0) circle (.3);
  \draw[fill=black] (11,0) circle (.3);
  \draw[fill=white] (10,0) circle (.3);
  \draw[fill=black] (9,0) circle (.3);
  \draw[fill=white] (8,0) circle (.3);
  \draw[fill=white] (7,0) circle (.3);
  \draw[fill=black] (6,0) circle (.3);
  \draw[fill=white] (5,0) circle (.3);
  \draw[fill=white] (4,0) circle (.3);
  \draw[fill=white] (3,0) circle (.3);
  \draw[fill=white] (2,0) circle (.3);
  \draw[fill=white] (1,0) circle (.3);
  \draw[fill=black!30] (0,0) circle (.3);
  \draw[fill=white] (-1,0) circle (.3);
  \draw[fill=white] (-2,0) circle (.3);
  \draw[fill=black] (-3,0) circle (.3);
  \draw[fill=white] (-4,0) circle (.3);
  \foreach \i in {-4,...,12}
    \node at (8-\i,-0.6) {$\i$};
  \node at (-5,0) {\ldots};
  \node at (13,0) {\ldots};
  \draw \truearrow (0,0.5) to [out=140,in=40] (-3,0.5);
  \node at (-7,0) {{\LARGE $A_{11,8}^{-3}$}};

  \fill[blue, opacity=0.5] (19,0) rectangle (20.5,0.5);
  \draw (15,0.5)--(15,-1);
  \draw (15.5,0.5)--(15.5,-1);
  \draw (16,0.5)--(16,-0.5);
  \draw (16.5,0.5)--(16.5,-0.5);
  \draw (17,0.5)--(17,0);
  \draw (17.5,0.5)--(17.5,0);
  \draw (18,0.5)--(18,0);
  \draw (18.5,0.5)--(18.5,0);
  \draw (19,0.5)--(19,0);
  \draw (19.5,0.5)--(19.5,0);
  \draw (20,0.5)--(20,0);
  \draw (20.5,0.5)--(20.5,0);
  \draw (15,0.5)--(20.5,0.5);
  \draw (15,0)--(20.5,0);
  \draw (15,-0.5)--(16.5,-0.5);
  \draw (15,-1)--(15.5,-1);
  \node at (20.5,0) {};
\end{tikzpicture}}\]
\[\scalebox{0.5}{\begin{tikzpicture}
  \draw[fill=black] (12,0) circle (.3);
  \draw[fill=black] (11,0) circle (.3);
  \draw[fill=white] (10,0) circle (.3);
  \draw[fill=black] (9,0) circle (.3);
  \draw[fill=white] (8,0) circle (.3);
  \draw[fill=white] (7,0) circle (.3);
  \draw[fill=black] (6,0) circle (.3);
  \draw[fill=white] (5,0) circle (.3);
  \draw[fill=white] (4,0) circle (.3);
  \draw[fill=white] (3,0) circle (.3);
  \draw[fill=white] (2,0) circle (.3);
  \draw[fill=white] (1,0) circle (.3);
  \draw[fill=black!30] (0,0) circle (.3);
  \draw[fill=white] (-1,0) circle (.3);
  \draw[fill=black] (-2,0) circle (.3);
  \draw[fill=white] (-3,0) circle (.3);
  \draw[fill=white] (-4,0) circle (.3);
  \foreach \i in {-4,...,12}
    \node at (8-\i,-0.6) {$\i$};
  \node at (-5,0) {\ldots};
  \node at (13,0) {\ldots};
  \draw \truearrow (0,0.5) to [out=140,in=40] (-2,0.5);
  \node at (-7,0) {{\LARGE $A_{10,8}^{-3}$}};

  \fill[blue, opacity=0.5] (19,0) rectangle (20,0.5);
  \draw (15,0.5)--(15,-1);
  \draw (15.5,0.5)--(15.5,-1);
  \draw (16,0.5)--(16,-0.5);
  \draw (16.5,0.5)--(16.5,-0.5);
  \draw (17,0.5)--(17,0);
  \draw (17.5,0.5)--(17.5,0);
  \draw (18,0.5)--(18,0);
  \draw (18.5,0.5)--(18.5,0);
  \draw (19,0.5)--(19,0);
  \draw (19.5,0.5)--(19.5,0);
  \draw (20,0.5)--(20,0);
  \draw (15,0.5)--(20,0.5);
  \draw (15,0)--(20,0);
  \draw (15,-0.5)--(16.5,-0.5);
  \draw (15,-1)--(15.5,-1);
  \node at (20.5,0) {};
\end{tikzpicture}}\]
\[\scalebox{0.5}{\begin{tikzpicture}
  \draw[fill=black] (12,0) circle (.3);
  \draw[fill=black] (11,0) circle (.3);
  \draw[fill=white] (10,0) circle (.3);
  \draw[fill=black] (9,0) circle (.3);
  \draw[fill=white] (8,0) circle (.3);
  \draw[fill=white] (7,0) circle (.3);
  \draw[fill=black] (6,0) circle (.3);
  \draw[fill=white] (5,0) circle (.3);
  \draw[fill=white] (4,0) circle (.3);
  \draw[fill=white] (3,0) circle (.3);
  \draw[fill=white] (2,0) circle (.3);
  \draw[fill=white] (1,0) circle (.3);
  \draw[fill=black!30] (0,0) circle (.3);
  \draw[fill=black] (-1,0) circle (.3);
  \draw[fill=white] (-2,0) circle (.3);
  \draw[fill=white] (-3,0) circle (.3);
  \draw[fill=white] (-4,0) circle (.3);
  \foreach \i in {-4,...,12}
    \node at (8-\i,-0.6) {$\i$};
  \node at (-5,0) {\ldots};
  \node at (13,0) {\ldots};
  \draw \truearrow (0,0.5) to [out=150,in=30] (-1,0.5);
  \node at (-7,0) {{\LARGE $A_{9,8}^{-3}$}};

  \fill[blue, opacity=0.5] (19,0) rectangle (19.5,0.5);
  \draw (15,0.5)--(15,-1);
  \draw (15.5,0.5)--(15.5,-1);
  \draw (16,0.5)--(16,-0.5);
  \draw (16.5,0.5)--(16.5,-0.5);
  \draw (17,0.5)--(17,0);
  \draw (17.5,0.5)--(17.5,0);
  \draw (18,0.5)--(18,0);
  \draw (18.5,0.5)--(18.5,0);
  \draw (19,0.5)--(19,0);
  \draw (19.5,0.5)--(19.5,0);
  \draw (15,0.5)--(19.5,0.5);
  \draw (15,0)--(19.5,0);
  \draw (15,-0.5)--(16.5,-0.5);
  \draw (15,-1)--(15.5,-1);
  \node at (20.5,0) {};
\end{tikzpicture}}\]
\[\scalebox{0.5}{\begin{tikzpicture}
  \draw[fill=black] (12,0) circle (.3);
  \draw[fill=black] (11,0) circle (.3);
  \draw[fill=white] (10,0) circle (.3);
  \draw[fill=black] (9,0) circle (.3);
  \draw[fill=white] (8,0) circle (.3);
  \draw[fill=white] (7,0) circle (.3);
  \draw[fill=black!30] (6,0) circle (.3);
  \draw[fill=white] (5,0) circle (.3);
  \draw[fill=white] (4,0) circle (.3);
  \draw[fill=black] (3,0) circle (.3);
  \draw[fill=white] (2,0) circle (.3);
  \draw[fill=white] (1,0) circle (.3);
  \draw[fill=black] (0,0) circle (.3);
  \draw[fill=white] (-1,0) circle (.3);
  \draw[fill=white] (-2,0) circle (.3);
  \draw[fill=white] (-3,0) circle (.3);
  \draw[fill=white] (-4,0) circle (.3);
  \foreach \i in {-4,...,12}
    \node at (8-\i,-0.6) {$\i$};
  \node at (-5,0) {\ldots};
  \node at (13,0) {\ldots};
  \draw \truearrow (6,0.5) to [out=140,in=40] (3,0.5);
  \node at (-7,0) {{\LARGE $A_{5,2}^{-3}$}};

  \fill[blue, opacity=0.5] (16.5,-0.5) rectangle (18,0);
  \draw (15,0.5)--(15,-1);
  \draw (15.5,0.5)--(15.5,-1);
  \draw (16,0.5)--(16,-0.5);
  \draw (16.5,0.5)--(16.5,-0.5);
  \draw (17,0.5)--(17,-0.5);
  \draw (17.5,0.5)--(17.5,-0.5);
  \draw (18,0.5)--(18,-0.5);
  \draw (18.5,0.5)--(18.5,0);
  \draw (19,0.5)--(19,0);
  \draw (15,0.5)--(19,0.5);
  \draw (15,0)--(19,0);
  \draw (15,-0.5)--(18,-0.5);
  \draw (15,-1)--(15.5,-1);
  \node at (20.5,0) {};
\end{tikzpicture}}\]
\[\scalebox{0.5}{\begin{tikzpicture}
  \draw[fill=black] (12,0) circle (.3);
  \draw[fill=black] (11,0) circle (.3);
  \draw[fill=white] (10,0) circle (.3);
  \draw[fill=black] (9,0) circle (.3);
  \draw[fill=white] (8,0) circle (.3);
  \draw[fill=white] (7,0) circle (.3);
  \draw[fill=black!30] (6,0) circle (.3);
  \draw[fill=white] (5,0) circle (.3);
  \draw[fill=black] (4,0) circle (.3);
  \draw[fill=white] (3,0) circle (.3);
  \draw[fill=white] (2,0) circle (.3);
  \draw[fill=white] (1,0) circle (.3);
  \draw[fill=black] (0,0) circle (.3);
  \draw[fill=white] (-1,0) circle (.3);
  \draw[fill=white] (-2,0) circle (.3);
  \draw[fill=white] (-3,0) circle (.3);
  \draw[fill=white] (-4,0) circle (.3);
  \foreach \i in {-4,...,12}
    \node at (8-\i,-0.6) {$\i$};
  \node at (-5,0) {\ldots};
  \node at (13,0) {\ldots};
  \draw \truearrow (6,0.5) to [out=140,in=40] (4,0.5);
  \node at (-7,0) {{\LARGE $A_{4,2}^{-3}$}};

  \fill[blue, opacity=0.5] (16.5,-0.5) rectangle (17.5,0);
  \draw (15,0.5)--(15,-1);
  \draw (15.5,0.5)--(15.5,-1);
  \draw (16,0.5)--(16,-0.5);
  \draw (16.5,0.5)--(16.5,-0.5);
  \draw (17,0.5)--(17,-0.5);
  \draw (17.5,0.5)--(17.5,-0.5);
  \draw (18,0.5)--(18,0);
  \draw (18.5,0.5)--(18.5,0);
  \draw (19,0.5)--(19,0);
  \draw (15,0.5)--(19,0.5);
  \draw (15,0)--(19,0);
  \draw (15,-0.5)--(17.5,-0.5);
  \draw (15,-1)--(15.5,-1);
  \node at (20.5,0) {};
\end{tikzpicture}}\]
\[\scalebox{0.5}{\begin{tikzpicture}
  \draw[fill=black] (12,0) circle (.3);
  \draw[fill=black] (11,0) circle (.3);
  \draw[fill=white] (10,0) circle (.3);
  \draw[fill=black] (9,0) circle (.3);
  \draw[fill=white] (8,0) circle (.3);
  \draw[fill=white] (7,0) circle (.3);
  \draw[fill=black!30] (6,0) circle (.3);
  \draw[fill=black] (5,0) circle (.3);
  \draw[fill=white] (4,0) circle (.3);
  \draw[fill=white] (3,0) circle (.3);
  \draw[fill=white] (2,0) circle (.3);
  \draw[fill=white] (1,0) circle (.3);
  \draw[fill=black] (0,0) circle (.3);
  \draw[fill=white] (-1,0) circle (.3);
  \draw[fill=white] (-2,0) circle (.3);
  \draw[fill=white] (-3,0) circle (.3);
  \draw[fill=white] (-4,0) circle (.3);
  \foreach \i in {-4,...,12}
    \node at (8-\i,-0.6) {$\i$};
  \node at (-5,0) {\ldots};
  \node at (13,0) {\ldots};
  \draw \truearrow (6,0.5) to [out=150,in=30] (5,0.5);
  \node at (-7,0) {{\LARGE $A_{3,2}^{-3}$}};

  \fill[blue, opacity=0.5] (16.5,-0.5) rectangle (17,0);
  \draw (15,0.5)--(15,-1);
  \draw (15.5,0.5)--(15.5,-1);
  \draw (16,0.5)--(16,-0.5);
  \draw (16.5,0.5)--(16.5,-0.5);
  \draw (17,0.5)--(17,-0.5);
  \draw (17.5,0.5)--(17.5,0);
  \draw (18,0.5)--(18,0);
  \draw (18.5,0.5)--(18.5,0);
  \draw (19,0.5)--(19,0);
  \draw (15,0.5)--(19,0.5);
  \draw (15,0)--(19,0);
  \draw (15,-0.5)--(17,-0.5);
  \draw (15,-1)--(15.5,-1);
  \node at (20.5,0) {};
\end{tikzpicture}}\]
\[\scalebox{0.5}{\begin{tikzpicture}
  \draw[fill=black] (12,0) circle (.3);
  \draw[fill=black] (11,0) circle (.3);
  \draw[fill=white] (10,0) circle (.3);
  \draw[fill=black!30] (9,0) circle (.3);
  \draw[fill=white] (8,0) circle (.3);
  \draw[fill=black] (7,0) circle (.3);
  \draw[fill=black] (6,0) circle (.3);
  \draw[fill=white] (5,0) circle (.3);
  \draw[fill=white] (4,0) circle (.3);
  \draw[fill=white] (3,0) circle (.3);
  \draw[fill=white] (2,0) circle (.3);
  \draw[fill=white] (1,0) circle (.3);
  \draw[fill=black] (0,0) circle (.3);
  \draw[fill=white] (-1,0) circle (.3);
  \draw[fill=white] (-2,0) circle (.3);
  \draw[fill=white] (-3,0) circle (.3);
  \draw[fill=white] (-4,0) circle (.3);
  \foreach \i in {-4,...,12}
    \node at (8-\i,-0.6) {$\i$};
  \node at (-5,0) {\ldots};
  \node at (13,0) {\ldots};
  \draw \truearrow (9,0.5) to [out=140,in=40] (7,0.5);
  \node at (-7,0) {{\LARGE $A_{1,-1}^{-3}$}};
  
  \fill[blue, opacity=0.5] (15.5,-1) rectangle (16.5,-0.5);
  \draw (15,0.5)--(15,-1);
  \draw (15.5,0.5)--(15.5,-1);
  \draw (16,0.5)--(16,-1);
  \draw (16.5,0.5)--(16.5,-1);
  \draw (17,0.5)--(17,0);
  \draw (17.5,0.5)--(17.5,0);
  \draw (18,0.5)--(18,0);
  \draw (18.5,0.5)--(18.5,0);
  \draw (19,0.5)--(19,0);
  \draw (15,0.5)--(19,0.5);
  \draw (15,0)--(19,0);
  \draw (15,-0.5)--(16.5,-0.5);
  \draw (15,-1)--(16.5,-1);
  \node at (20.5,0) {};
\end{tikzpicture}}\]
\[\scalebox{0.5}{\begin{tikzpicture}
  \draw[fill=black] (12,0) circle (.3);
  \draw[fill=black] (11,0) circle (.3);
  \draw[fill=white] (10,0) circle (.3);
  \draw[fill=black!30] (9,0) circle (.3);
  \draw[fill=black] (8,0) circle (.3);
  \draw[fill=white] (7,0) circle (.3);
  \draw[fill=black] (6,0) circle (.3);
  \draw[fill=white] (5,0) circle (.3);
  \draw[fill=white] (4,0) circle (.3);
  \draw[fill=white] (3,0) circle (.3);
  \draw[fill=white] (2,0) circle (.3);
  \draw[fill=white] (1,0) circle (.3);
  \draw[fill=black] (0,0) circle (.3);
  \draw[fill=white] (-1,0) circle (.3);
  \draw[fill=white] (-2,0) circle (.3);
  \draw[fill=white] (-3,0) circle (.3);
  \draw[fill=white] (-4,0) circle (.3);
  \foreach \i in {-4,...,12}
    \node at (8-\i,-0.6) {$\i$};
  \node at (-5,0) {\ldots};
  \node at (13,0) {\ldots};
  \draw \truearrow (9,0.5) to [out=150,in=30] (8,0.5);
  \node at (-7,0) {{\LARGE $A_{0,-1}^{-3}$}};

  \fill[blue, opacity=0.5] (15.5,-1) rectangle (16,-0.5);
  \draw (15,0.5)--(15,-1);
  \draw (15.5,0.5)--(15.5,-1);
  \draw (16,0.5)--(16,-1);
  \draw (16.5,0.5)--(16.5,-0.5);
  \draw (17,0.5)--(17,0);
  \draw (17.5,0.5)--(17.5,0);
  \draw (18,0.5)--(18,0);
  \draw (18.5,0.5)--(18.5,0);
  \draw (19,0.5)--(19,0);
  \draw (15,0.5)--(19,0.5);
  \draw (15,0)--(19,0);
  \draw (15,-0.5)--(16.5,-0.5);
  \draw (15,-1)--(16,-1);
  \node at (20.5,0) {};
\end{tikzpicture}}\]
\[\scalebox{0.5}{\begin{tikzpicture}
  \draw[fill=black] (12,0) circle (.3);
  \draw[fill=black!30] (11,0) circle (.3);
  \draw[fill=white] (10,0) circle (.3);
  \draw[fill=black] (9,0) circle (.3);
  \draw[fill=black] (8,0) circle (.3);
  \draw[fill=white] (7,0) circle (.3);
  \draw[fill=black] (6,0) circle (.3);
  \draw[fill=white] (5,0) circle (.3);
  \draw[fill=white] (4,0) circle (.3);
  \draw[fill=white] (3,0) circle (.3);
  \draw[fill=white] (2,0) circle (.3);
  \draw[fill=white] (1,0) circle (.3);
  \draw[fill=black] (0,0) circle (.3);
  \draw[fill=white] (-1,0) circle (.3);
  \draw[fill=white] (-2,0) circle (.3);
  \draw[fill=white] (-3,0) circle (.3);
  \draw[fill=white] (-4,0) circle (.3);
  \foreach \i in {-4,...,12}
    \node at (8-\i,-0.6) {$\i$};
  \node at (-5,0) {\ldots};
  \node at (13,0) {\ldots};
  \draw \truearrow (11,0.5) to [out=140,in=40] (8,0.5);
  \node at (-7,0) {{\LARGE $-A_{0,-3}^{-3}$}};

  \fill[blue, opacity=0.5] (15.5,-1) rectangle (16,-0.5);
  \fill[blue, opacity=0.5] (15,-1.5) rectangle (16,-1);
  \draw (15,0.5)--(15,-1.5);
  \draw (15.5,0.5)--(15.5,-1.5);
  \draw (16,0.5)--(16,-1.5);
  \draw (16.5,0.5)--(16.5,-0.5);
  \draw (17,0.5)--(17,0);
  \draw (17.5,0.5)--(17.5,0);
  \draw (18,0.5)--(18,0);
  \draw (18.5,0.5)--(18.5,0);
  \draw (19,0.5)--(19,0);
  \draw (15,0.5)--(19,0.5);
  \draw (15,0)--(19,0);
  \draw (15,-0.5)--(16.5,-0.5);
  \draw (15,-1)--(16,-1);
  \draw (15,-1.5)--(16,-1.5);
  \node at (20.5,0) {};
\end{tikzpicture}}\]
\[\scalebox{0.5}{\begin{tikzpicture}
  \draw[fill=black] (12,0) circle (.3);
  \draw[fill=black!30] (11,0) circle (.3);
  \draw[fill=black] (10,0) circle (.3);
  \draw[fill=black] (9,0) circle (.3);
  \draw[fill=white] (8,0) circle (.3);
  \draw[fill=white] (7,0) circle (.3);
  \draw[fill=black] (6,0) circle (.3);
  \draw[fill=white] (5,0) circle (.3);
  \draw[fill=white] (4,0) circle (.3);
  \draw[fill=white] (3,0) circle (.3);
  \draw[fill=white] (2,0) circle (.3);
  \draw[fill=white] (1,0) circle (.3);
  \draw[fill=black] (0,0) circle (.3);
  \draw[fill=white] (-1,0) circle (.3);
  \draw[fill=white] (-2,0) circle (.3);
  \draw[fill=white] (-3,0) circle (.3);
  \draw[fill=white] (-4,0) circle (.3);
  \foreach \i in {-4,...,12}
    \node at (8-\i,-0.6) {$\i$};
  \node at (-5,0) {\ldots};
  \node at (13,0) {\ldots};
  \draw \truearrow (11,0.5) to [out=150,in=30] (10,0.5);
  \node at (-7,0) {{\LARGE $A_{-2,-3}^{-3}$}};

  \fill[blue, opacity=0.5] (15,-1.5) rectangle (15.5,-1);
  \draw (15,0.5)--(15,-1.5);
  \draw (15.5,0.5)--(15.5,-1.5);
  \draw (16,0.5)--(16,-0.5);
  \draw (16.5,0.5)--(16.5,-0.5);
  \draw (17,0.5)--(17,0);
  \draw (17.5,0.5)--(17.5,0);
  \draw (18,0.5)--(18,0);
  \draw (18.5,0.5)--(18.5,0);
  \draw (19,0.5)--(19,0);
  \draw (15,0.5)--(19,0.5);
  \draw (15,0)--(19,0);
  \draw (15,-0.5)--(16.5,-0.5);
  \draw (15,-1)--(15.5,-1);
  \draw (15,-1.5)--(15.5,-1.5);
  \node at (20.5,0) {};
\end{tikzpicture}}\]
\[\scalebox{0.5}{\begin{tikzpicture}
  \draw[fill=black!30] (12,0) circle (.3);
  \draw[fill=black] (11,0) circle (.3);
  \draw[fill=black] (10,0) circle (.3);
  \draw[fill=black] (9,0) circle (.3);
  \draw[fill=white] (8,0) circle (.3);
  \draw[fill=white] (7,0) circle (.3);
  \draw[fill=black] (6,0) circle (.3);
  \draw[fill=white] (5,0) circle (.3);
  \draw[fill=white] (4,0) circle (.3);
  \draw[fill=white] (3,0) circle (.3);
  \draw[fill=white] (2,0) circle (.3);
  \draw[fill=white] (1,0) circle (.3);
  \draw[fill=black] (0,0) circle (.3);
  \draw[fill=white] (-1,0) circle (.3);
  \draw[fill=white] (-2,0) circle (.3);
  \draw[fill=white] (-3,0) circle (.3);
  \draw[fill=white] (-4,0) circle (.3);
  \foreach \i in {-4,...,12}
    \node at (8-\i,-0.6) {$\i$};
  \node at (-5,0) {\ldots};
  \node at (13,0) {\ldots};
  \draw \truearrow (12,0.5) to [out=140,in=40] (10,0.5);
  \node at (-7,0) {{\LARGE $A_{-2,-4}^{-3}$}};

  \fill[blue, opacity=0.5] (15,-2) rectangle (15.5,-1);
  \draw (15,0.5)--(15,-2);
  \draw (15.5,0.5)--(15.5,-2);
  \draw (16,0.5)--(16,-0.5);
  \draw (16.5,0.5)--(16.5,-0.5);
  \draw (17,0.5)--(17,0);
  \draw (17.5,0.5)--(17.5,0);
  \draw (18,0.5)--(18,0);
  \draw (18.5,0.5)--(18.5,0);
  \draw (19,0.5)--(19,0);
  \draw (15,0.5)--(19,0.5);
  \draw (15,0)--(19,0);
  \draw (15,-0.5)--(16.5,-0.5);
  \draw (15,-1)--(15.5,-1);
  \draw (15,-1.5)--(15.5,-1.5);
  \draw (15,-2)--(15.5,-2);
  \node at (20.5,0) {};
\end{tikzpicture}}\]
\[\scalebox{0.5}{\begin{tikzpicture}
  \draw[fill=black] (12,0) circle (.3);
  \draw[fill=black] (11,0) circle (.3);
  \draw[fill=black] (10,0) circle (.3);
  \draw[fill=black] (9,0) circle (.3);
  \draw[fill=white] (8,0) circle (.3);
  \draw[fill=white] (7,0) circle (.3);
  \draw[fill=black] (6,0) circle (.3);
  \draw[fill=white] (5,0) circle (.3);
  \draw[fill=white] (4,0) circle (.3);
  \draw[fill=white] (3,0) circle (.3);
  \draw[fill=white] (2,0) circle (.3);
  \draw[fill=white] (1,0) circle (.3);
  \draw[fill=black] (0,0) circle (.3);
  \draw[fill=white] (-1,0) circle (.3);
  \draw[fill=white] (-2,0) circle (.3);
  \draw[fill=white] (-3,0) circle (.3);
  \draw[fill=white] (-4,0) circle (.3);
  \foreach \i in {-4,...,12}
    \node at (8-\i,-0.6) {$\i$};
  \node at (-5,0) {\ldots};
  \node at (13,0) {\ldots};
  \draw \truearrow (13,0.5) to [out=140,in=40] (10,0.5);
  \node at (-7,0) {{\LARGE $A_{-2,-5}^{-3}$}};

  \fill[blue, opacity=0.5] (15,-2.5) rectangle (15.5,-1);
  \draw (15,0.5)--(15,-2.5);
  \draw (15.5,0.5)--(15.5,-2.5);
  \draw (16,0.5)--(16,-0.5);
  \draw (16.5,0.5)--(16.5,-0.5);
  \draw (17,0.5)--(17,0);
  \draw (17.5,0.5)--(17.5,0);
  \draw (18,0.5)--(18,0);
  \draw (18.5,0.5)--(18.5,0);
  \draw (19,0.5)--(19,0);
  \draw (15,0.5)--(19,0.5);
  \draw (15,0)--(19,0);
  \draw (15,-0.5)--(16.5,-0.5);
  \draw (15,-1)--(15.5,-1);
  \draw (15,-1.5)--(15.5,-1.5);
  \draw (15,-2)--(15.5,-2);
  \draw (15,-2.5)--(15.5,-2.5);
  \node at (20.5,0) {};
\end{tikzpicture}}\]
This determines the coefficient of $\ket{\mu}$ in the expansion of $J_{-k}^{(\bal)}\ket{\lambda}$, where $\lambda \ne \mu$.
For the coefficient of $\ket{\lambda}$, we need to take into account the projective representation~\eqref{eq:rhat}.
Hence, we compute it by only considering the particles (resp.\ holes) to the left (resp.\ right) of $\frac{1}{2}$ (\textit{cf}.~\cite[Eq.~(54)]{BHS}), which from~\eqref{eq:a_int} yields $\alpha_8^3 + \alpha_2^3 - \alpha_0^3 - \alpha_{-2}^3$.
Putting all this together, we obtain
\begin{align*}
J_{-3} \ket{(8,3,1)} &= A^{-3}_{11,8} \ket{(11,3,1)} + A^{-3}_{10,8} \ket{(10,3,1)} + A^{-3}_{9,8} \ket{(9,3,1)} + A^{-3}_{5,2} \ket{(8,6,1)}
\\ & \hspace{20pt} + A^{-3}_{4,2} \ket{(8,5,1)} + A^{-3}_{3,2} \ket{(8,4,1)} + A^{-3}_{1,-1} \ket{(8,3,3)} + A^{-3}_{0,-1} \ket{(8,3,2)}
\\ & \hspace{20pt} - A^{-3}_{0,-3}\ket{(8,3,2,2)} + A^{-3}_{-2,-3}\ket{(8,3,1,1)} - A^{-3}_{-2,-4}\ket{(8,3,1,1,1)}
\\ & \hspace{20pt} + A^{-3}_{-2,-5}\ket{(8,3,1,1,1,1)} + (\alpha_8^3 + \alpha_2^3 - \alpha_0^3 - \alpha_{-2}^3) \ket{(8,3,1)}.
\end{align*}
\end{example}

\begin{example}
Continuing Example~\ref{ex:jkexample_beta_zero}, applying the Murnaghan--Nakayama rule~\cite[Thm.~5.23]{BHS} with~\eqref{eq:a_int} yields
\begin{align*}
p_3 s_{(8,3,1)} &= s_{(11,3,1)} + (\alpha_8 + \alpha_9 + \alpha_{10}) s_{(10,3,1)} + (\alpha_8^2+\alpha_8\alpha_9+\alpha_9^2) s_{(9,3,1)} + s_{(8,6,1)}
\\ & \hspace{20pt} + (\alpha_2 + \alpha_3 + \alpha_4) s_{(8,5,1)} + (\alpha_3^2+\alpha_2\alpha_3+\alpha_3^2) s_{(8,4,1)}
\\ & \hspace{20pt} + (\alpha_{-1} + \alpha_0 + \alpha_1) s_{(8,3,3)} + (\alpha_{-1}^2 + \alpha_{-1} \alpha_0 + \alpha_0^2) s_{(8,3,2)}
\\ & \hspace{20pt} - s_{(8,3,2,2)} + (\alpha_{-3}^2 + \alpha_{-3} \alpha_{-2} + \alpha_{-2}^2) s_{(8,3,1,1)} -  (\alpha_{-4} + \alpha_{-3} + \alpha_{-2})  s_{(8,3,1,1,1)}
\\ & \hspace{20pt} + s_{(8,3,1,1,1,1)} + (\alpha_8^3 + \alpha_2^3 - \alpha_0^3 - \alpha_{-2}^3)s_{(8,3,1)}
\allowdisplaybreaks\\
3\frac{\partial s_{(8,3,1)}}{\partial p_3} &= s_{(5,3,1)} + e_1(-\bal_{(4,8)}) s_{(4,3,1)} + e_2(-\bal_{(3,8)}) s_{(3,3,1)}  - e_4(-\bal_{(1,8)}) s_{(2,2,1)}
\\ & \hspace{80pt}- e_5(-\bal_{(0,8)} )s_{(2,1,1)} + e_7(-\bal_{(-2,8)}) s_{(2)}  - e_1(-\bal_{(-2,2)}) s_{(8)},
\end{align*}
where we have written $s_{\lambda} = s_{\lambda}(\pp\dv\bal)$ for simplicity.
\end{example}

\begin{example}
\label{ex:powersum_expansion}
As another example, we can define the expansion of $p_k(\xx/\yy)$ in terms of the double Schur functions by taking $\lambda = \varnothing$ in the Murnaghan--Nakayama rule.
In particular,
\[
J_{-2}^{(\bal)} \ket{\varnothing} = \ket{(2,0)} - \ket{(1,1)} + (\alpha_0 + \alpha_1) \ket{(1,0)},
\]
which when we restrict to a single $\xx$ and $\yy$ variable, we obtain
\begin{align*}
x_1^2 - y_1^2 = p_2(\xx_1/\yy_1) & = h_2(\xx_1/\yy_1\dv\bal) - e_2(\xx_1/\yy_1\dv\bal) + (\alpha_0 + \alpha_1) h_1(\xx_1/\yy_1\dv\bal)
\\ & = \bigl((x_1 - \alpha_0)(x_1 - \alpha_1) + (y_1 + \alpha_0)(x_1 - \alpha_1) \bigr)
\\ & \hspace{20pt}  - \bigl( (y_1 + \alpha_1)(y_1 + \alpha_2) + (y_1 + \alpha_1)(x_1 - \alpha_2) \bigr)
\\ & \hspace{20pt} + (\alpha_0 + \alpha_1) \bigl( (x_1 - \alpha_0) + (y_1 + \alpha_0) \bigr).
\end{align*}
This can be readily verified by a direct computation.
\end{example}

Continuing from Example~\ref{ex:powersum_expansion}, by repeated applications of the Murnaghan--Nakayama rule, we obtain an expansion of $p_{\lambda}$ as a sum over double Schur functions.
More precisely, the coefficients in this expansion are given by a weighted (signed) sum over ribbon tableau analogous to the classical expansion of powersums into Schur functions.

\section{Raising operator formulas}
\label{sec:raising_operators}

We give an example of the integral formula~\cite[Eq.~(68)]{BHS}:
For a partition $\lambda$ and $\ell \geqslant \ell(\lambda)$, 
\begin{equation}
\label{eq:raising_operator_integral}
s_{\lambda}(\pp\dv\bal) = \sum_{k_1,\dotsc,k_{\ell}=0}^{\infty} \oint \prod_{i < j} \left(1 - \frac{z_i^{-1} - \alpha_{1-i}}{z_j^{-1} - \alpha_{1-i}} \right) \prod_{i=1}^{\ell} h_{k_i}(\pp\dv\sigma^{s_i}\bal) \frac{z_i^{-1} (z_i^{-1}|\sigma^{1-i} \bal)^{\lambda_i-1}}{(z_i^{-1}|\sigma^{s_i}\bal)^{k_i}}  \frac{dz_i}{2\pi\ii z_i}
\end{equation}
for some values $s_i$.
(That any $s_i$ is valid comes from the generating series, and we are also free to choose these values within any sum over all $k_i \in \ZZ_{\geqslant0}$.)
An important consequence of Proposition~\ref{prop:orthonormality_gen2} is that the sum in~\eqref{eq:raising_operator_integral} is finite, so we can interchange the summation and integral.
Define the shorthands
\[
h_{k,s} := h_k(\pp\dv\sigma^s\bal)
\qquad \text{ and } \qquad
h_{\lambda,\eta} := h_{\lambda_1,-\eta_1} \dotsm h_{\lambda_{\ell},-\eta_{\ell}}.
\]

\begin{example}
\label{ex:integral_raising}
Consider $\lambda$ with $\ell(\lambda) = 3$, and so we have
\begin{gather*}
\prod_{i<j} (1 - A_{ij}) = 1 - A_{12} - A_{13} - A_{23} + A_{12}A_{13} + A_{12}A_{23} + A_{13}A_{23} - A_{12}A_{13}A_{23},
\\
\text{where } \qquad
A_{12} := \frac{z_1^{-1} - \alpha_0}{z_2^{-1} - \alpha_0},
\qquad
A_{13} := \frac{z_1^{-1} - \alpha_0}{z_3^{-1} - \alpha_0},
\qquad
A_{23} := \frac{z_2^{-1} - \alpha_{-1}}{z_3^{-1} - \alpha_{-1}}.
\end{gather*}
Next, we use the linearity of the contour integral to compute each term using this expansion in~\eqref{eq:raising_operator_integral}.
Let $dF := \sum_{k_1,k_2,k_3=0}^{\infty} \frac{z_i^{-1} (z_i^{-1}|\sigma^{1-i} \bal)^{\lambda_i-1}}{(z_i^{-1}|\sigma^{1-i}\bal)^{k_i}}  \frac{dz_i}{2\pi\ii z_i}$, and so we compute
\begin{gather*}
\oint 1 \, dF = h_{(\lambda_1,\lambda_2,\lambda_3),(0,1,2)},
\oint A_{12} \, dF = h_{(\lambda_1+1,\lambda_2-1,\lambda_3),(1,0,2)},
\oint A_{23} \, dF = h_{(\lambda_1,\lambda_2+1,\lambda_3-1),(0,2,1)},
\allowdisplaybreaks\\
\oint A_{13} \, dF = h_{(\lambda_1+1,\lambda_2,\lambda_3-1),(1,1,1)} + (\alpha_0 - \alpha_{-1}) \sum_{k=2}^{\lambda_3-2} (\alpha_0|\sigma^{\lambda_3-2-k}\bal)^{k-2} h_{(\lambda_1+1,\lambda_2,\lambda_3-k),(1,1,1)},
 \allowdisplaybreaks\\
\oint A_{12} A_{13} \, dF = h_{(\lambda_1+2,\lambda_2-1,\lambda_3-1),(2,0,1)} + \cdots,
\qquad
\oint A_{12} A_{23} \, dF = h_{(\lambda_1+1,\lambda_2,\lambda_3-1),(1,1,1)} + \cdots,
\allowdisplaybreaks\\
\oint A_{13} A_{23} \, dF = h_{(\lambda_1+1,\lambda_2,\lambda_3-1),(1,1,1)} + \cdots,
\qquad
\oint A_{12} A_{13} A_{23} \, dF = h_{(\lambda_1+2,\lambda_2,\lambda_3-1),(2,1,0)} + \cdots,
\end{gather*}
where the trailing terms are multiple sums analogous to $\oint A_{13} \, dF$.
\end{example}

\begin{example}
While it might seem like all of the coefficients might always be similar to those of $\oint A_{13} \, dF$, this is not the case when $\ell(\lambda) \geqslant 4$.
In particular, we have
\begin{align*}
\oint (z - \alpha_0)^3 \sum_{k=0}^{\infty} h_k(\pp\dv\bal) \frac{(z^{-1}|\bal)^2}{(z^{-1}|\bal)} \frac{dz}{2\pi\ii z}
& = (\alpha_{-1} - \alpha_2) (\alpha_{-1} - \alpha_1) (\alpha_{-1} - \alpha_0)^3 h_{1,-2}
\\ &\hspace{20pt} + (\alpha_{-1} - \alpha_2) (\alpha_{-1} - \alpha_1) (\alpha_{-1} - \alpha_0)^2 h_{2,-2}
\\ &\hspace{20pt} + (\alpha_{-1} - \alpha_1) (\alpha_{-1} - \alpha_0)^2 h_{3,-2}
\\ &\hspace{20pt} + (\alpha_{-1} - \alpha_0)^2 h_{4,-2}
\\ &\hspace{20pt} + (\alpha_{-1} - 2 \alpha_0 + \alpha_3) h_{5,-2}
+ h_{6,-2}.
\end{align*}
\end{example}

\begin{example}
\label{ex:raising_operator}
Let us consider the raising operator formula
\begin{equation}
\label{eq:raising_operator_defn}
s_{\lambda}(\pp\dv\bal) = \prod_{1 \leqslant i < j \leqslant \ell} (1 - R_{ij}) h_{\lambda,\delta}.
\end{equation}
given in~\cite[Sec.~2]{Fun12} (noting a minor typo in the Jacobi--Trudi formula),\footnote{This is essentially expanding the determinant~\cite[9th Variation]{Macdonald92}.} where $R_{ij}$ is the \defn{raising operator} defined by
\[
R_{ij} h_{\lambda,\eta} = h_{\overline{\lambda},\overline{\eta}},
\text{ where } \overline{\lambda} = (\lambda_1, \dotsc, \lambda_{i-1}, \lambda_i+1, \lambda_{i+1}, \dotsc, \lambda_{j-1},\lambda_j-1,\lambda_{j+1}, \dotsc, \lambda_{\ell})
\]
and similarly for $\overline{\eta}$ from $\eta$.
Therefore, we can rewrite the leading factor in~\eqref{eq:raising_operator_integral} as, using the shorthand $(i|j)^k = (z_i^{-1}|\iota\sigma^j\bal)^k = (z_i^{-1} - \alpha_{-j}) \cdots (z_i^{-1} - \alpha_{-j-k+1})$,
\[
1 - \frac{(1|0)}{(2|0)} - \frac{(1|0)}{(3|1)} - \frac{(2|1)}{(3|1)}
+ \frac{(1|0)^2}{(2|0)(3|1)} + \frac{(1|0)}{(3|1)} + \frac{(1|0)(2|1)}{(3|0)^2}
- \frac{(1|0)^2}{(3|0)^2}
= \frac{\prod_{i < j} (z_j^{-1} - z_i^{-1})}{(2|0)(3|0)^2}
\]
In particular, we note that the sum does not naturally factor in the form of $\prod_{i<j} (1 - A'_{ij})$ nor can it be written as a determinant with entries $(i|j)^k$ over $(2|0)(3|0)^2$.
\end{example}

Even in the case $\bbe = 0$, we cannot easily answer~\cite[Prob.~5.32]{BHS}, which is to find a general (algebraic) proof of the identity in Example~\ref{ex:raising_operator}.
In addition, by comparing Example~\ref{ex:integral_raising} with Example~\ref{ex:raising_operator}, we see that even though the $\bal = 0$ case reduces to the classical integral formula proof of the raising operators (see, \textit{e.g.},~\cite[Eq.~(1.15)]{Baker96}), it does not extend to the $\bal$ case as one might expect.

\section{Skew-Pieri rule}
\label{sec:skew_pieri}

Next, we consider the $\bbe = 0$ version of the skew-Pieri rule from~\cite{BHS}.

\begin{corollary}[{Skew-Pieri formulas~\cite[Cor.~6.15]{BHS}}]
\label{cor:skew_pieri}
We have
\begin{subequations} \label{eq:skew-pieri}
\begin{align}
\label{eq:skew_pieri}
h_k(\pp'\dv\bal)s_{\mu/\nu}(\pp'\dv\bal) &= \sum_{\lambda,\eta} c_{k,\mu/\nu}^{\lambda/\eta}(\bal) s_{\lambda/\eta}(\pp'\dv\bal),
\\
\label{eq:dual_skew_pieri}
e_k(\pp'\dv\bal) s_{\mu/\nu}(\pp'\dv\bal) &= (-1)^k \sum_{\lambda,\eta} \overline{c}_{k,\mu/\nu}^{\lambda/\eta}(\bal) s_{\lambda/\eta}(\pp'\dv\bal),
\end{align}
\end{subequations}
where the sum in~\eqref{eq:skew_pieri} is over all partitions $\lambda,\eta$ such that $\lambda/\mu$ is a horizontal strip and $\nu/\eta$ is a vertical strip, the sum in~\eqref{eq:dual_skew_pieri} is over all partitions $\lambda,\eta$ such that $\lambda/\mu$ is a vertical strip and $\nu/\eta$ is a horizontal strip, and
\begin{subequations} \label{eq:skew-pieri-coeffs}
\begin{align}
c_{k,\mu/\nu}^{\lambda/\eta}(\bal) = \oint \widehat{s}_{\nu/\eta}(0/(-z)\dv\bal)  \widehat{s}_{\lambda/\mu}(z\dv\bal) \frac{(z; \bal)^{k-1}}{z^{k+1}} \frac{dz}{2\pi\ii},  \label{eq:skew_pieri_A}
\\
\overline{c}_{k,\mu/\nu}^{\lambda/\eta}(\bal) = \oint \widehat{s}_{\nu/\eta}(z\dv\bal) \widehat{s}_{\lambda/\mu}(0/(-z)\dv\bal) \frac{z^{-k-1}}{(z; \bal)^{1-k}} \frac{dz}{2\pi\ii}.  \label{eq:skew_pieri_B}
\end{align}
\end{subequations}
\end{corollary}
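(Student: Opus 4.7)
The proof I would write follows the strategy of~\cite[Cor.~6.15]{BHS}, simplified to the $\bbe = 0$ setting where all relevant Laurent expansions become finite. The central ingredient is the deformed boson-fermion correspondence of Theorem~\ref{thm:boson_fermion} together with the fundamental two-point function from Theorem~\ref{thm:main_theorem}.

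The plan is as follows. First, I would encode the Pieri multipliers $h_k$ and $e_k$ as contour integrals extracted from their generating series: by~\eqref{eq:double_h_gf}, the operator ``multiply by $h_k(\pp\dv\bal)$'' on $\ZZ[\bal][\pp]$ is recovered as a residue of the generating series $H(z\dv\bal) = \sum_m h_m(\pp\dv\bal)\, z^{-m}(z^{-1}|\bal)^{-m}$, and similarly for $e_k$ via~\eqref{eq:double_e_gf}. Using~\eqref{eq:shifted_inversion} to rewrite $1/(z^{-1}|\bal)^m$ as a shifted power $(z^{-1}|\sigma^m\bal)^{-m}$, and combining with the orthogonality identities of Propositions~\ref{prop:orthonormality},~\ref{prop:orthonormality_gen}, and~\ref{prop:orthonormality_gen2}, one verifies that the residue kernel extracting $h_k$ is precisely $(z;\bal)^{k-1}/z^{k+1}$ as appearing in~\eqref{eq:skew_pieri_A}.

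Second, I would transport these generating series through the boson-fermion correspondence to fermionic Fock space. There $H(z\dv\bal)$ and $E(z\dv\bal)$ act as deformed half vertex operators whose matrix elements between fermionic basis states produce precisely the one-variable specializations $\widehat{s}_{\lambda/\mu}(z\dv\bal)$ and $\widehat{s}_{\nu/\eta}(0/(-z)\dv\bal)$ featured in the integrand of~\eqref{eq:skew-pieri-coeffs}. The support conditions (horizontal strip for $\lambda/\mu$, vertical strip for $\nu/\eta$) arise automatically from the anticommutation relations of the Clifford algebra generators. Realizing $s_{\mu/\nu}(\pp\dv\bal)$ as a fermionic matrix element, left-multiplication by $h_k$ (resp.\ $e_k$) corresponds to inserting one extra half-vertex operator; resolving the identity on both the bra and ket sides then yields a double sum over $\lambda$ and $\eta$ whose integral coefficient matches~\eqref{eq:skew_pieri_A} (resp.\ $(-1)^k$ times~\eqref{eq:skew_pieri_B}, the sign arising from the reciprocal relationship between~\eqref{eq:double_h_gf} and~\eqref{eq:double_e_gf}). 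The interchange of summation and contour integration is legitimate because Proposition~\ref{prop:orthonormality_gen2} forces the integrand to have only finitely many nonzero residue contributions when $\bbe = 0$.

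The main obstacle I anticipate is the careful bookkeeping of the $\bal$-shifts under $\sigma$ as the inserted half vertex operator is commuted past the vertex operators encoding $s_{\mu/\nu}$: each commutation introduces a shift of the $\bal$ parameters that must be reconciled with the final kernel $(z;\bal)^{k-1}$ rather than a $\sigma^j\bal$-shifted variant. Fortunately, the finiteness granted by Proposition~\ref{prop:orthonormality_gen2} allows all manipulations to proceed algebraically and sidesteps the analytic subtleties that dominate the proof in the general $\bbe \neq 0$ case of~\cite{BHS}.
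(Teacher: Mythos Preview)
Your proposal is correct and follows essentially the same approach as the paper, which does not give an independent proof here but simply cites \cite[Cor.~6.15]{BHS}; the surrounding discussion in Section~\ref{sec:skew_pieri} only computes the coefficients~\eqref{eq:skew-pieri-coeffs} explicitly in the $\bbe=0$ setting. Your outline---extracting $h_k$ and $e_k$ via the residue kernel from the generating series~\eqref{eq:double_gf}, passing through the boson-fermion correspondence so that the half vertex operators produce the single-variable branching factors $\widehat{s}_{\lambda/\mu}(z\dv\bal)$ and $\widehat{s}_{\nu/\eta}(0/(-z)\dv\bal)$, and invoking the finiteness from Proposition~\ref{prop:orthonormality_gen2} to justify the interchange---is precisely the argument of~\cite{BHS} specialized to $\bbe=0$, which is all the paper intends.
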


To compute them, we use~\cite[Cor.~6.13]{BHS}, which says $\widehat{s}_{\lambda/\mu}(\xx_1 \dv \bal) = \widehat{s}_{\nu/\eta}(1/\yy_1 \dv \bal)= 0$ unless $\lambda / \mu$ is a horizontal strip (resp.\ $\nu/\eta$ is a vertical strip).
Thus, we assume that $\lambda / \mu$ is a horizontal strip and $\nu/\eta$ is a vertical strip, and hence
\begin{subequations}
\label{eq:single_var}
\begin{align}
\widehat{s}_{\lambda/\mu}(\xx_1\dv\bal) & = \prod_{j=1}^{\ell} \frac{1 - \alpha_{j-\lambda'_j} x}{1 - \alpha_j x}  \prod_{\bbb\in \lambda/\mu} \frac{x}{1 - \alpha_{c(\bbb)} x},  \label{eq:single_x_var}
\\
\widehat{s}_{\nu/\eta}(0/\yy_1\dv\bal) & = \prod_{j=1}^{\ell} \frac{1 + \alpha_{\nu_j-j+1} y}{1 + \alpha_{1-j} y} \prod_{\bbb \in \nu/\eta} \frac{y}{1 + \alpha_{c(\bbb)+1} y}.  \label{eq:single_y_var}
\end{align}
\end{subequations}
Substituting in~\eqref{eq:single_var} into~\eqref{eq:skew_pieri_A}, we obtain
\begin{align*}
c_{k,\mu/\nu}^{\lambda/\eta}(\bal)
&= \oint \widehat{s}_{\lambda/\mu}(z\dv\bal)  \widehat{s}_{\nu/\eta}(0/(-z)\dv\bal) \frac{(z; \bal)^{k-1}}{z^{k+1}} \frac{dz}{2\pi\ii}
\\&= \oint \left( \prod_{j=1}^{\ell} \frac{1 - \alpha_{\nu_j-j+1} z}{1 - \alpha_{1-j} z} \prod_{\bbb \in \nu/\eta} \frac{-z}{1 - \alpha_{c(\bbb)+1} z}\right)
\\ & \hspace{30pt} \times \left(\prod_{j=1}^{\ell} \frac{1 - \alpha_{j-\lambda'_j} z}{1 - \alpha_j z}  \prod_{\bbb\in \lambda/\mu} \frac{z}{1 - \alpha_{c(\bbb)} z}\right) \frac{(z; \bal)^{k-1}}{z^{k+1}} \frac{dz}{2\pi\ii}.
\end{align*}
If we further impose the condition $\nu=\varnothing$, then $\eta=\varnothing$ as well.
Therefore, we have
\begin{equation}
\label{eq:Aklambda_integral}
c_{k\mu}^{\lambda}(\bal) := c_{k,\mu/\varnothing}^{\lambda/\varnothing}(\bal)
= \oint \left(\prod_{j=1}^{\ell} \frac{1 - \alpha_{j-\lambda'_j} z}{1 - \alpha_j z}  \prod_{\bbb\in \lambda/\mu} \frac{z}{1 - \alpha_{c(\bbb)} z}\right) \frac{(z; \bal)^{k-1}}{z^{k+1}} \frac{dz}{2\pi\ii}.
\end{equation}
We can see the valuation of the formal Laurent series is $r := k+1 - \abs{\lambda/\mu}$ (equivalently, the order of the pole at $z = 0$), and so the (formal contour) integral is $0$ whenever $\abs{\lambda/\mu} > k$.

Therefore, by standard symmetric function integral formulas, we have for any $\ell \geqslant \ell(\lambda)$
\begin{equation}
\label{eq:Aklambda_sf}
c_{k\mu}^{\lambda}(\bal)
= \sum_{s+t=k-|\lambda/\mu|} h_s(\{\alpha_{c(\bbb)} | \bbb\in\lambda/\mu\} \cup \bal_{[1,\ell]}) e_t(\{-\alpha_{j-\lambda_j'} \mid 1 \leqslant j \leqslant \ell\} \cup -\bal_{(0,k)}),
\end{equation}
where for a set $X = \{x_1, \dotsc, x_m\}$ we denote $f(X) = f(x_1, \dotsc, x_m)$ (since $f$ is a symmetric function, the order does not matter).
Strictly speaking, the case $k = 0$ is not evaluating the integral correctly with respect to the parameters, but since the sum only has the term $s = t = 0$, it yields the correct answer.

Similarly, consider the case where $\nu=\varnothing$; then $\eta=\varnothing$ as well and
\begin{align*}
\overline{c}_{k\mu}^{\lambda}(\bal)
&:= \overline{c}_{k,\mu/\varnothing}^{\lambda/\varnothing}(\bal)
= \oint \widehat{s}_{\lambda/\mu}(0/(-z)\dv\bal) \frac{z^{-k-1}}{(z; \bal)^{1-k}} \frac{dz}{2\pi\ii}
\\&= \oint \left(\prod_{j=1}^{\ell} \frac{1 - \alpha_{\lambda_j-j+1} z}{1 - \alpha_{1-j} z} \prod_{\bbb \in \lambda/\mu} \frac{-z}{1 - \alpha_{c(\bbb)+1} z}\right) \frac{z^{-k-1}}{(z; \bal)^{1-k}} \frac{dz}{2\pi\ii}
\\&= \oint \left(\prod_{j=1}^{\ell} \frac{1 - \alpha_{\lambda_j-j+1} z}{1 - \alpha_{1-j} z}  \prod_{\bbb \in \lambda/\mu} \frac{-z}{1 - \alpha_{c(\bbb)+1} z}\right) \frac{(z;\iota\bal)^{k-1}}{z^{k+1}} \frac{dz}{2\pi\ii}.
\end{align*}

By the same reasoning as above,
\begin{align*}
\overline{c}_{k\mu}^{\lambda}(\bal)
= (-1)^{|\lambda/\mu|} \sum_{s+t=k-|\lambda/\mu|} & h_s(\{\alpha_{c(\bbb)+1} | \bbb\in\lambda/\mu\} \cup \bal_{[1-\ell,0]})
\\ & \times e_t(\{-\alpha_{\lambda_j-j+1} \mid 1 \leqslant j \leqslant \ell\} \cup -\bal_{(1-k,1)}).
\end{align*}

Next, we compare to \cite[Prop.~3.4]{Fun12}. In the context of a single row, this is:

\begin{proposition}
\label{prop:fun_pieri}
The coefficient of the Pieri rule is given by
\[
c_{k\mu}^\lambda(\iota\bal) = \sum_T \prod_{\substack{b\in (k) \\ T(b) \text{ unbarred}}} (\alpha_{T(b) - \rho(b)_{T(b)}} - \alpha_{T(b)-c(b)}).
\]
where the sum is over all reverse $k$-supertableaux $T$ with row word sending $\mu$ to $\lambda$. Here, $T(b)$ is the value of $T$ at box $b$, $c(b)$ is the content of box $b$, and $\rho(b)$ is the partition obtained by adding all boxes up to $b$ to $\mu$.
\end{proposition}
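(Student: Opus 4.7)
The plan is to show the equality by expanding both sides as signed polynomials in the $\alpha_i$ and constructing a weight-preserving bijection between their monomial index sets. Our derived formula~\eqref{eq:Aklambda_sf}, applied with $\bal \mapsto \iota\bal$, gives
\[
c_{k\mu}^{\lambda}(\iota\bal) = \sum_{s+t = k - \abs{\lambda/\mu}} h_s(A) \, e_t(B),
\]
where $A = \{\alpha_{1-c(\bbb)} : \bbb \in \lambda/\mu\} \cup \bal_{[1-\ell, 0]}$ and $B = \{-\alpha_{1-j+\lambda_j'} : 1 \leqslant j \leqslant \ell\} \cup -\bal_{(1-k,1)}$. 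I would expand each summand into a sum of monomials indexed by a pair $(C, D)$, where $C \subseteq A$ is a size-$s$ multiset and $D \subseteq B$ is a size-$t$ subset, obtaining a signed expansion of total $\alpha$-degree $s + t = k - \abs{\lambda/\mu}$.

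On Fun's side, a reverse $k$-supertableau $T$ with row word sending $\mu$ to $\lambda$ must have exactly $k - \abs{\lambda/\mu}$ unbarred boxes (so that the degree in $\alpha$ matches), with the remaining $\abs{\lambda/\mu}$ barred boxes corresponding to the cells added to $\mu$ to reach $\lambda$. Distributing the product $\prod_{b \text{ unbarred}} (\alpha_{T(b)-\rho(b)_{T(b)}} - \alpha_{T(b)-c(b)})$ produces $2^{k-\abs{\lambda/\mu}}$ signed monomials per tableau: at each unbarred box $b$, one chooses either the \emph{offset} factor $+\alpha_{T(b)-\rho(b)_{T(b)}}$ or the \emph{content} factor $-\alpha_{T(b)-c(b)}$.

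The heart of the proof is a bijection between $(C, D)$-pairs in our expansion and (tableau, binary choice)-pairs in Fun's expansion that preserves the monomial and its sign. The expected correspondence sends each element of $C$ to an offset choice at some unbarred box, each element of $D$ to a content choice, and uses the entries drawn from $\bal_{[1-\ell, 0]}$ and $-\bal_{(1-k,1)}$ to pin down the values $T(b)$ at unbarred positions; the indices match after the $\iota$-shift since the arithmetic of $T(b) - \rho(b)_{T(b)}$ and $T(b) - c(b)$ aligns with the parameters $1-c(\bbb)$ and $1-j+\lambda_j'$ appearing in $A$ and $B$.

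The main obstacle is that $\rho(b)_{T(b)}$ records the $T(b)$-th part of the intermediate partition after reading the tableau up to $b$, and so it depends intrinsically on the \emph{order} of the row word, whereas $C$ and $D$ are order-free. Overcoming this requires either fixing a canonical reading order on the cells of $\lambda/\mu$ and showing that summing over all tableaux exactly symmetrizes the contributions, or proceeding by induction on $k$ or $\abs{\lambda/\mu}$ using the shift identities in Proposition~\ref{prop:shift_eh} to reduce to a base case. A secondary check, routine once the bijection is in place, is that the intrinsic $(-1)^t$ from $e_t(B)$ matches the parity of content-factor choices in Fun's expansion for every fixed monomial.
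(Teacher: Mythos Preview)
The paper does not prove this proposition. The sentence immediately preceding it reads ``Next, we compare to \cite[Prop.~3.4]{Fun12}. In the context of a single row, this is:'', so Proposition~\ref{prop:fun_pieri} is simply a restatement of Fun's result specialized to the one-row shape $(k)$, not something established here. The surrounding discussion then uses Fun's formula as an independent benchmark against which the paper's own expression~\eqref{eq:Aklambda_sf} is compared in examples; in fact the text explicitly says that~\eqref{eq:Aklambda_sf} ``does not immediately imply Graham positivity,'' and the examples illustrate numerical agreement rather than supply a derivation.

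Separately, your proposal is not a proof but a plan with a self-identified gap. You correctly set up the two monomial expansions and note that the crux would be a sign- and weight-preserving bijection, but you then acknowledge that $\rho(b)_{T(b)}$ depends on the reading order of the tableau while your $(C,D)$ data are orderless, and you do not resolve this. Saying the issue ``requires either fixing a canonical reading order \dots\ or proceeding by induction'' is a description of possible strategies, not an argument; neither route is carried out, and the claimed index-matching (that $T(b)-\rho(b)_{T(b)}$ and $T(b)-c(b)$ align with $1-c(\bbb)$ and $1-j+\lambda_j'$ after applying $\iota$) is asserted rather than verified. As written, the proposal neither reproduces a proof from the paper (there is none) nor supplies an independent one.
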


The Yamanouchi condition in~\cite[Prop.~3.4]{Fun12} is equivalent to $\lambda / \mu$ being a horizontal strip.
A row word sending $\mu$ to $\lambda$ means that the barred entries of $T$ denote the rows of the boxes in $\lambda/\mu$. Therefore, $T$ is fixed, so the sum is a single term. In addition, $T(b) - \rho(b)_{T(b)}$ is just the negative content of the added box in $\lambda/\mu$, while $T(b)-c(b)$ is the row of $\lambda/\mu$ of a given box, minus $i-1$ if this is the $i$th box of $\lambda/\mu$ from left to right.

Let us note that our formula \eqref{eq:Aklambda_sf} does not immediately imply Graham positivity, which means the coefficients belong to $\ZZ_{\geqslant 0}[\alpha_{i-1} - \alpha_i \mid i \in \ZZ]$.
Yet, we can easily see a shadow of it by setting $\bal = \alpha$, where the contour integral~\eqref{eq:Aklambda_integral} becomes $0$ unless $\abs{\lambda/\mu} = k$ by simply counting the factors in the numerator and denominator (as per Remark~\ref{rem:substitution}). 
On the other hand, our formulas are ``compressed'' as a sum of monomials, in the sense that if we remove common factors from the numerator and denominator of the contour integrals~\eqref{eq:Aklambda_integral} (equivalently, occurring in both the elementary and homogeneous symmetric function inputs in~\eqref{eq:Aklambda_sf}), then we get no cancellations.

\begin{example}
Consider $\mu = (4, 3)$, $k = 4$, and $\lambda = (4,4,1)$.
Then from Proposition~\ref{prop:fun_pieri}, the corresponding coefficient is given as the sum over
\[
\newcommand{\bth}{\overline{3}}%
\newcommand{\btw}{\overline{2}}%
\ytableausetup{boxsize=1.3em}%
\begin{array}{ccc}
\ytableaushort{{\bth}{\btw}22} & \ytableaushort{{\bth}{\btw}21} & \ytableaushort{{\bth}{\btw}11}
\\[3pt]
(\alpha_{-2} - \alpha_0)(\alpha_{-2} - \alpha_{-1}) & (\alpha_{-2} - \alpha_0)(\alpha_{-3} - \alpha_{-2}) & (\alpha_{-3} - \alpha_{-1})(\alpha_{-3} - \alpha_{-2})
\\[3pt]
\ytableaushort{{\bth}2{\btw}2} &\ytableaushort{{\bth}2{\btw}1} & \ytableaushort{{\bth}22{\btw}}
\\[3pt]
(\alpha_{-1} - \alpha_1)(\alpha_{-2} - \alpha_{-1}) & (\alpha_{-1} - \alpha_1)(\alpha_{-3} - \alpha_{-2}) & (\alpha_{-1} - \alpha_1)(\alpha_{-1} - \alpha_{0}).
\end{array}
\]
Applying $\iota$ and summing the result yields $\alpha_0 \alpha_1 - \alpha_0 \alpha_4 - \alpha_1 \alpha_4 + \alpha_4^2$.
Now noting $\lambda' = (3,2,2,2)$ and taking $\ell = 4$, we have that~\eqref{eq:Aklambda_sf} yields
\begin{align*}
c_{k,\mu}^{\lambda} & = \sum_{s+t=2} h_s(\{\alpha_{-2},\alpha_2 \} \cup \{ \alpha_1, \alpha_2, \alpha_3, \alpha_4\}) e_t(\{-\alpha_{-2}, -\alpha_0, -\alpha_1, -\alpha_2\} \cup \{ -\alpha_1, -\alpha_2, -\alpha_3\})
\\
& = \sum_{s=0}^2 h_s(\{\alpha_{4} \}) e_{2-s}(\{-\alpha_0, -\alpha_1\})
\\ & = \alpha_0 \alpha_1 - (\alpha_0 + \alpha_1) \alpha_4 + \alpha_4^2 = (\alpha_4 - \alpha_0)(\alpha_4 - \alpha_1).
\end{align*}
\end{example}

\begin{example}
We note that in general, there is no such nice factorization of the Pieri rule coefficients:
\begin{align*}
h_2 s_{522} &= s_{722} + s_{632} + s_{6221} + s_{542} + s_{5321} + s_{5222}
\\ & \hspace{20pt} + (\alpha_5 + \alpha_6 - \alpha_{-1} - \alpha_{-2}) s_{622} + (\alpha_2 + \alpha_5 - \alpha_{-1} - \alpha_{-2}) s_{532} + (\alpha_5 - \alpha_{-1}) s_{5221}
\\ & \hspace{20pt} + (\alpha_5 - \alpha_{-1})(\alpha_5 - \alpha_{-2}) s_{522}.
\end{align*}
\end{example}

\bibliographystyle{habbrv}
\bibliography{focktorial}

\end{document}